\newtheorem{acknowledgement}{Acknowledgement}[section]
\newtheorem{algorithm}{Algorithm}[section]
\newtheorem{remark}{Remark}[section]
\newtheorem{example}{Example}[section]
\newcommand{\od}{\frac{d}{dt}}
\newcommand{\pd}{{\partial }}
\newcommand {\opLip}{\operatorname{Lip}}
\newcommand {\BV}{\operatorname{BV}}
\newcommand {\divp}{\operatorname{div}^{+}}
\newcommand {\divm}{\operatorname{div}^{-}}
\newcommand {\Dt}{\Delta t}
\newcommand {\Ph}{\operatorname{P}_h}
\newcommand {\CIh}{\mathcal{I}^h}
\def\myref{{\global\advance\refnum by 1} {\bf \large Lecture \the \refnum. }}
\begin{document}

\bibliographystyle{plainnat}
\title{Convergence Analysis of a Finite Difference Scheme \\
for the Gradient Flow associated with the ROF Model}
\author{Qianying Hong \footnote{qyhong@math.ku.edu. This author is associated with Department of Mathematics, University of Kansas, Lawrenceville,  Kansas 66045.}, 
Ming-Jun Lai \footnote{This author is associated with Department of Mathematics, University of Georgia, Athens, GA 30602. His email address is mjlai@math.uga.edu} \and
Jingyue Wang\footnote{jwang@math.ku.edu. This author is associated with Department of Mathematics, University of Kansas, Lawrenceville,  Kansas 66045.}}
\maketitle

\begin{abstract}
We present a convergence analysis of a finite difference scheme for the time dependent partial
different equation called gradient flow associated with the Rudin-Osher-Fatemi model. We
devise an iterative algorithm to compute the solution of the finite difference scheme and prove the
convergence of the iterative algorithm. Finally  computational experiments are shown to demonstrate the convergence
of the finite difference scheme. An application for image denoising is given.  
\footnotesize{This is a version of Jan. 2012. }
\end{abstract}




\section{Introduction}
The well-known ROF model may be approximated in the following way
\begin{equation}
\label{ROFv2}
\min_{u \in \BV(\Omega)} \int_\Omega \sqrt{\epsilon + |\nabla u|^2}dx  + \frac{1}{2\lambda}
\int_\Omega |u- f|^2 dx.
\end{equation}
As $\epsilon>0$, the above minimizing functional is differentiable. Thus, the Euler-Lagrange equation
associated with the above minimization is
\begin{equation}
\hbox{div} \left( \frac{\nabla u}{\sqrt{\epsilon + |\nabla u|^2}}\right) - \frac{1}{\lambda} (u- f)=0.
\end{equation}
Solution of this partial differential equation can be further approximated. Let  us consider the time
evolution version of the PDE:
\begin{equation}
\label{epsTEROF2}
\begin{cases}
\od u = \hbox{ div }\left( \dfrac{\nabla u}{\sqrt{\epsilon + |\nabla u|^2}}
\right) - \frac{1}{\lambda}(u- f)  & \in \Omega_T\cr
{\partial \over \partial {\bf n}} u = 0  &\hbox{ on } \pd \Omega_T\cr
u(\cdot,0)=u_0(\cdot), & \Omega,
\end{cases}
\end{equation}
where $f$ is given a noised image, $\Omega_T=[0, T)\times \Omega$, ${\partial \over \partial {\bf n}}$
is the outward normal derivative operator. It is called the gradient flow of (\ref{ROFv2}). When $\epsilon=0$, it is called TV flow.
Similar partial differential equations also appear in geometry
analysis. See references, e.g., \cite{LT78}, \cite{Gerhardt80},
\cite{ABC01}, \cite{ABC01b}, \cite{ABC02}, and the references
therein. The existence, uniqueness, stability of the weak solutions
to these time dependent PDE were studied in the literature mentioned
above. Numerical solution of the PDE (\ref{epsTEROF2}) using finite
elements has been discussed in \cite{FP03} and \cite{FOP05}. In
particular, the researchers showed that the finite element solution
exists, is unique, is convergent to the weak solution of the PDE
(\ref{epsTEROF2}), the rate of convergence under some sufficient
conditions is obtained, and the computation is stable. A fixed point
iterative algorithm for the associated system of nonlinear equations
was discussed in \cite{VO96} and its convergence was studied in
\cite{DV97}. Although the finite difference solution of the time
dependent PDE (\ref{epsTEROF2}) has been the method of choice for
image denoising (e.g. See \cite{VS02}), no convergence of the finite
difference solution to the weak solution of the PDE has been
established in the literature so far to the best of the authors'
knowledge. See also \cite{FL10}.

The purpose of this paper is to provide a proof of the convergence
of the discrete solution obtained from a finite difference scheme
for (\ref{epsTEROF2}) to the weak solution. See our
Theorem~\ref{mainFDconv} in Section 3. Note that the finite
difference scheme in (\ref{FD2}) is slightly different from the
traditional ones: forward or backward or central difference scheme.
We use the average of forward and backward differences. The
advantage of our scheme is that the value of the nonlinear term in
(\ref{ROFv2}) for certain piecewise linear functions is equal to the
value of its discretization of the nonlinear term. As the PDE is
associated with a convex functional, we use the techniques from
convex analysis to help establishing the convergence. In addition,
we study how to numerically solve the time dependent PDE
(\ref{epsTEROF2}) by using our finite difference scheme. As the
finite difference scheme is a system of nonlinear equations, we
shall derive an iterative algorithm and show that the iterative
solutions are convergent. Again we use our techniques on convex
analysis to establish the convergence of the iterative algorithm.

Let us now introduce our finite difference scheme for (\ref{epsTEROF2}). We need some notations.
For convenience, let $\Omega=[0, 1]\times [0, 1]$. We let $N>0$ be a positive integer and divide $\Omega$ by equally-spaced points
$x_i=ih$ and $y_j=jh$ for $0\le i, j \le N-1$ where $h=1/N$. For any $f(x,y)$ defined on $\Omega$, let $f^h_{i,j}=f(x_i,y_j)$ if $f$ is a
continuous function on $\Omega$. Otherwise, $f^h$ will be defined as in (\ref{newdeffh}).
We shall use two different divided differences  $\nabla^+$ and $\nabla^-$ to approximate the gradient operator. That is,
$$
\nabla^+ f^h_{i,j}= \left(\frac{f^h_{i+1,j}-f^h_{i,j}}{h}, \frac{f^h_{i,j+1}-f^h_{i,j}}{h}\right)
$$
and
$$
\nabla^- f^h_{i,j}= \left(\frac{f^h_{i,j}-f^h_{i-1,j}}{h}, \frac{f^h_{i,j}-f^h_{i,j-1}}{h}\right)
$$
for all $0\le i, j\le N-1$ with $f^h_{-1,j} = f^h_{0,j}, f^h_{N,j}=f^h_{N-1,j}$ for all $j$ and
$f^h_{i,-1}=f^h_{i,0}, f^h_{i,N}=f^h_{i,N-1}$ for all $i$. Furthermore, we define discrete divergence operators
$\hbox{div}^+$ and $\hbox{div}^-$ to approximate the continuous divergence operator, i.e.,
\begin{align*}
\hbox{div}^+ (f^h_{i,j}, g^h_{i,j}) =&
\begin{cases}
f^h_{0,j}/h &\qquad i= 0, 0\le j\le N-1 \\
(f^h_{i,j}-f^h_{i-1,j})/h &\qquad 0 < i < N-1, 0\le j\le N-1\\
-f^h_{i-2, j}/h &\qquad i = N-1, 0\le j\le N-1
\end{cases}\\
& \quad +
\begin{cases}
g^h_{i,0}/h &\qquad j= 0, 0\le i\le N-1 \\
(g^h_{i,j}-g^h_{i,j-1})/h &\qquad 0 < j < N-1, 0\le i\le N-1 \\
-g^h_{i, j-2}/h &\qquad j = N-1, 0\le i \le N-1
\end{cases}
\end{align*}
for all $0\le i, j\le N-1$ and similarly for $\hbox{div}^-$. By their definitions, we have for every
$p\in \mathbb{R}^{N\times N} \times \mathbb{R}^{N\times N}$ and $u\in \mathbb{R}^{N\times N}$
$$
\langle-\divp p, u\rangle = \langle p, \nabla^+ u\rangle, \qquad \langle -\divm p, u\rangle = \langle p, \nabla^- u\rangle.
$$

With these notations, we are able to define a finite difference scheme for numerical solution of the time dependent PDE (\ref{epsTEROF2}).
\begin{equation}
\label{FD}
\begin{cases}
\od u_{i,j} = \frac{1}{2}\hbox{ div}^+ \left( \dfrac{\nabla^+ u_{i,j}}{\sqrt{\epsilon + |\nabla^+ u_{i,j}|^2}}
\right)  & \cr
\qquad +\frac{1}{2}\hbox{ div}^- \left( \dfrac{\nabla^- u_{i,j}}{\sqrt{\epsilon +
|\nabla^- u_{i,j}|^2}}
\right)  - \frac{1}{\lambda}(u_{i,j}- f^h_{i,j})  & 0\le i, j\le N-1, t\in [0, T]\cr
{\partial \over \partial {\bf n}} u_{i,j} = 0  & i=0, N, 0\le j\le N-1; \cr
& j=0, N, 0\le i\le N-1,\cr
u(x_i,y_j,0)=u^h_0(x_i,y_j), & 0\le i, j\le N-1,
\end{cases}
\end{equation}
where $u^h_0$ is a discretization of the initial value $u_0$ according to (\ref{newdeffh}).
Next we discretize the time domain $[0, T]$ by equally-spaced points $t_k=k\Dt$, $\Dt = T/M$. We approximate the
$\od u_{i,j}$ by $(u^k_{i,j}-u^{k-1}_{i,j})/\Dt$ to have
the fully discrete version of finite difference scheme:
\begin{equation}
\label{FD2}
\begin{cases}
\frac{1}\Dt(u^k_{i,j}- u^{k-1}_{i,j}) =
\frac{1}{2}\hbox{ div}^+ \left( \dfrac{\nabla^+ u^k_{i,j}}{\sqrt{\epsilon + |\nabla^+ u^k_{i,j}|^2}}
\right)  & \cr
\quad+\frac{1}{2}\hbox{ div}^- \left( \dfrac{\nabla^- u^k_{i,j}}{\sqrt{\epsilon + |\nabla^- u^k_{i,j}|^2}}
\right)  - \frac{1}{\lambda}(u^k_{i,j}- f^h_{i,j})  & 0\le i, j\le N-1, 1\le k\le M\cr
{\partial \over \partial {\bf n}} u^k_{i,j} = 0  & i=0, N, 0\le j\le N-1; \cr
& j=0, N, 0\le i\le N-1,  0\le k \le M\cr
u(x_i,y_j,0)=u^h_0(x_i,y_j), & 0\le i, j\le N-1.
\end{cases}
\end{equation}

We shall first show that the above scheme (\ref{FD2}) has a uniqueness solution in \S 2 and we will establish some properties of the solution.
Then we show the solution in (\ref{FD2})
converges to the weak solution of time dependent PDE (\ref{epsTEROF2}) in the sense that the piecewise linear interpolation of the solution
vector of (\ref{FD2}) converges weakly to a function $U^*$ which is the weak solution of the PDE (\ref{epsTEROF2}).
These will be done in \S 3. Next we shall
explain how to numerically solve this system of nonlinear equations in \S 4.  We finally  report our computational results in \S 5.

\section{Preliminary Results}
We first introduce a weak formulation of PDE~(\ref{epsTEROF2}) that is suggested by~\cite{FP03}.
\begin{definition}
\label{weaksol}
We say that  $u \in L^1([0, T],\BV(\Omega))$ is a weak solution of (\ref{epsTEROF2}) if $u$ satisfies the initial value and boundary
conditions in (\ref{epsTEROF2}) and for any
$w \in L^1([0, T], W^{1,1}(\Omega))$ with ${\partial \over \partial {\bf n}}w(x,t) =0$ for all $(t,x)\in [0, T)\times \partial \Omega$,
\begin{equation}
\label{weakform}
\int_0^s\int_\Omega \od u w dx dt  + \int_0^s \int_\Omega \frac{\nabla u\cdot \nabla w}{\sqrt{\epsilon +
|\nabla u|^2}}+ \frac{1}{\lambda}\int_0^s\int_\Omega (u - f)w dxdt =0,
\end{equation}
for any $s\in (0, T]$.
\end{definition}

It is known (cf. \cite{FP03}) there exists a unique weak solution $U^*$ satisfying the above weak formulation.  $U^*$ is in fact in
$L^\infty((0, T],\hbox{BV}(\Omega))$ if $u^0 \in \hbox{BV}(\Omega)$ and $f\in L^2(\Omega)$.
Following the ideas in \cite{LT78}, the researchers in \cite{FP03} further showed the weak solution can be characterized
by the following inequality.
\begin{theorem}
\label{FengThm1}
Let $u$ be a weak solution as in Definition~\ref{weaksol}. Then $u$ satisfies the following inequality:
for any $s\in (0, T]$,
\begin{eqnarray}
\label{FengIne}
&&\int_0^s \int_\Omega \od v (v- u)dxdt + \int_0^s (J(v)- J(u)) dt \cr
&\ge & \frac{1}{2}\left[ \int_\Omega (v(x, s)- u(x, s))^2dx - \int_\Omega (v(x,0)- u_0(x,0))^2dx\right]
\end{eqnarray}
for all $v \in L^1([0, T], W^{1,1}(\Omega))$ with ${\partial \over \partial {\bf n}}v(x,t) =0$ for all $(t,x)\in [0, T)\times
\partial \Omega$, where
\begin{equation}
\label{continuous-J-functional}
J(u) =\int_\Omega \sqrt{\epsilon+|\nabla u(x,t)|^2}dx + \frac{1}{2\lambda} \int_\Omega |f(x,t)- u(x,t)|^2dx.
\end{equation}
On the other hand, if a function $u\in L^1((0, T], \BV(\Omega))$ satisfies the above inequality (\ref{FengIne}),
then $u$ is a weak solution.
\end{theorem}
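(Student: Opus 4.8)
The plan is to establish the equivalence between the weak formulation (\ref{weakform}) and the variational inequality (\ref{FengIne}) by exploiting the convexity of the functional $J$. The starting observation is that $J$ is a convex functional on $W^{1,1}(\Omega)$ (indeed on $\BV(\Omega)$), since $p\mapsto\sqrt{\epsilon+|p|^2}$ is convex and the fidelity term is quadratic. The key analytic device is the subdifferential inequality: for a convex functional $J$, the weak formulation (\ref{weakform}) says precisely that $-\od u$ belongs to the subdifferential $\partial J(u(\cdot,t))$ for (almost) every $t$, i.e.
\begin{equation*}
J(w) - J(u(\cdot,t)) \ge \int_\Omega \bigl(-\od u\bigr)\,(w - u)\,dx
\end{equation*}
for all admissible $w$. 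Checking this requires identifying the Gateaux derivative of $J$ at $u$ in the direction $w-u$ with the expression appearing in (\ref{weakform}); that computation is standard because $\epsilon>0$ makes the integrand smooth, and one must be a little careful that for $u$ only in $\BV$ the inequality still holds in the subdifferential sense (this is where the characterization from \cite{LT78} cited in the text is used).

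First I would prove the forward direction: assume $u$ is a weak solution. Fix $s\in(0,T]$ and a test function $v$. Apply the subdifferential inequality above with $w=v(\cdot,t)$, integrate in $t$ from $0$ to $s$, to get
\begin{equation*}
\int_0^s\bigl(J(v)-J(u)\bigr)\,dt \ge -\int_0^s\int_\Omega \od u\,(v-u)\,dx\,dt.
\end{equation*}
Then I would rewrite the right-hand side by adding and subtracting: $-\int_0^s\int_\Omega \od u\,(v-u) = \int_0^s\int_\Omega \od v\,(v-u) - \int_0^s\int_\Omega \od (v-u)\,(v-u)$, and recognize the last term as $\frac12\frac{d}{dt}\|v-u\|_{L^2}^2$, which integrates to the boundary term $\frac12[\,\|v(\cdot,s)-u(\cdot,s)\|_{L^2}^2 - \|v(\cdot,0)-u_0\|_{L^2}^2\,]$. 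Rearranging yields exactly (\ref{FengIne}). The main obstacle here is justifying the integration by parts in time for the term $\int_0^s\int_\Omega \od(v-u)(v-u)$ when $u$ has only the regularity guaranteed by Definition~\ref{weaksol}; one handles this by a density/approximation argument, or by noting $t\mapsto\|u(\cdot,t)\|_{L^2}^2$ is absolutely continuous, which follows from the energy estimate implicit in the weak formulation (take $w=u$).

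For the converse, suppose $u\in L^1((0,T],\BV(\Omega))$ satisfies (\ref{FengIne}) for all admissible $v$. The standard trick is to perturb around $u$: take $v = u + \tau\varphi$ for a fixed smooth $\varphi$ with $\partial\varphi/\partial\mathbf n=0$ and small $\tau>0$, substitute into (\ref{FengIne}), divide by $\tau$, and let $\tau\to 0^+$. Convexity of $J$ gives that $\frac1\tau(J(u+\tau\varphi)-J(u))$ decreases to the directional derivative $\langle J'(u),\varphi\rangle$, which equals $\int_\Omega \nabla u\cdot\nabla\varphi/\sqrt{\epsilon+|\nabla u|^2} + \frac1\lambda\int_\Omega(u-f)\varphi$; the quadratic term in $\tau$ from the boundary data vanishes in the limit, and the $\od v$ term becomes $\int_0^s\int_\Omega \od u\,\varphi$. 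This produces the inequality $\int_0^s\int_\Omega \od u\,\varphi + \int_0^s\langle J'(u),\varphi\rangle\,dt \ge 0$; replacing $\varphi$ by $-\varphi$ gives the reverse, hence equality, which is (\ref{weakform}) after localizing in $s$ (differentiate in $s$, or use that $s$ is arbitrary). I expect the delicate point in both directions to be the interchange of the $t$-integral with the limit $\tau\to0$ and the handling of the time-derivative term under minimal regularity; a monotone convergence argument (justified by convexity, so the difference quotients are monotone in $\tau$) together with the absolute continuity of the $L^2$ norm along the flow should suffice, and this is presumably carried out in detail in \cite{FP03}.
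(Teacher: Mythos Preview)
The paper does not actually prove Theorem~\ref{FengThm1}; it is quoted from \cite{FP03} (building on \cite{LT78}) and stated without proof, serving as a tool for the convergence analysis in \S3. So there is no ``paper's own proof'' to compare against. That said, your sketch follows precisely the approach of \cite{LT78} and \cite{FP03}: exploit convexity of $J$ to interpret the weak formulation as the subdifferential inclusion $-\od u\in\partial J(u)$, then integrate the resulting inequality against $v-u$ and recognize $\frac12\od\|v-u\|^2$; for the converse, perturb $v=u+\tau\varphi$ and pass to the limit.

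One algebraic slip to fix: in your forward direction you write
\[
-\int_0^s\!\!\int_\Omega \od u\,(v-u) \;=\; \int_0^s\!\!\int_\Omega \od v\,(v-u) \;-\; \int_0^s\!\!\int_\Omega \od(v-u)\,(v-u),
\]
but the right-hand side equals $+\int_0^s\int_\Omega \od u\,(v-u)$, not its negative. The correct decomposition is
\[
-\int_0^s\!\!\int_\Omega \od u\,(v-u) \;=\; -\int_0^s\!\!\int_\Omega \od v\,(v-u) \;+\; \int_0^s\!\!\int_\Omega \od(v-u)\,(v-u),
\]
and then moving the $\od v$ term to the left gives (\ref{FengIne}) exactly as you state. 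This is cosmetic; your overall strategy and identification of the delicate points (time regularity of $\|v-u\|_{L^2}^2$, monotone convergence in the converse) are correct and match what the cited references do.
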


Theorem~\ref{FengThm1} is our major tool to establish the convergence of the finite difference solution to the weak solution of
the PDE (\ref{epsTEROF2}). We shall use it in the proof of our main result in Theorem~\ref{mainFDconv}.
Next we introduce some basic notations and prove some basic properties
of  the solution vector of finite difference scheme (\ref{FD2}) in the remaining part of this section.

We partition the region $\Omega = [0,1]\times[0,1]$ evenly into $N$
by $N$ grids with a grid size of $h=1/N$, and assume that the pixel value on each grid at index $(i,j)$ is $f^h_{i,j}$,
\begin{equation}
\label{newdeffh}
f^h_{i,j} = \frac{1}{h^2}\int_{ih}^{(i+1)h}\int_{jh}^{(j+1)h} f(x)\,dx,\quad 0\le i,j \le N-1
\end{equation}

Then the initial data $f^h$ for our numerical scheme is a discretization of the initial data $f$ for PDE~(\ref{epsTEROF2}).
\begin{equation}
    \label{projector}
    f^h := \sum_{i,j}f^h_{i,j} \chi_{i,j}(x),
\end{equation}
where $\chi_{i,j}(x)$ is the characteristic function of square $\Omega_{i,j}:=[ih, (i+1)h]\times[jh, (j+1)h]$. When there is no ambiguity, we also
treat array $\{u^k\}$ as a discrete function(piecewise constant on grids) with $u^k(x) = u^k_{i,j}$ for $x \in \Omega_{i,j}$. In later sections, we
will always use superscript(e.g. $u^h(\cdot, t)$ or $u^k$) to indicate that the function is a discrete function. We also introduce a projecting
operator $\Ph$ mapping from $L^1$ to the space of discrete functions
$$
\Ph f := f^h
$$
We define the discrete $L^2$ norms of $f^h$ in analogue of standard $L^2$ norms.
$$
\|f^h\| := \left\{\sum_{i,j}(f^h_{i,j})^2 \,h^2\right\}^{1/2}.
$$
Furthermore, we define a discretized version of the nonlinear  functional (\ref{continuous-J-functional})
\begin{align}
\label{discrete-J-functional}
J^h(v) = \frac{1}{2}\sum_{i,j}\sqrt{\epsilon + |\nabla^+ v_{i,j}|^2}\,h^2 +
\frac{1}{2}\sum_{i,j}\sqrt{\epsilon + |\nabla^- v_{i,j}|^2}\,h^2 +
\frac{1}{2\lambda}\sum_{i,j}(v_{i,j}-f^h_{i,j})^2\,h^2,
\end{align}
and the discrete energy functional
\begin{align}
\label{discrete-E-functional}
E^h(v) = J^h(v) +\frac{1}{2\Dt}\sum_{i,j}( v_{i,j} - u^{k-1}_{i,j})^2\,h^2
\end{align}
for all arrays $v_{i,j}$, $0\le i,j \le N-1$.

We are now ready to show the following existence and uniqueness results.
\begin{theorem}
Fix $N>0$ and $M>0$. There exists a unique array $u^k_{i,j}, 0\le i, j\le N-1, 0\le k\le M$ satisfying the above system (\ref{FD2}) of
nonlinear equations.
\end{theorem}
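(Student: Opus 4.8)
The plan is to recognize that the fully discrete scheme (\ref{FD2}), for each fixed time level $k$, is precisely the Euler--Lagrange (optimality) system for minimizing the discrete energy functional $E^h(v)$ defined in (\ref{discrete-E-functional}), with $u^{k-1}$ playing the role of a fixed datum. So the argument proceeds by induction on $k$: assuming $u^{k-1}$ is known, I would show that $E^h$ has a unique minimizer $u^k$, and then verify that the minimizer is characterized exactly by the equations in (\ref{FD2}).

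\textbf{Existence.} First I would observe that $E^h \colon \mathbb{R}^{N\times N}\to\mathbb{R}$ is continuous (each term $\sqrt{\epsilon+|\nabla^{\pm}v_{i,j}|^2}$ is smooth since $\epsilon>0$, and the quadratic terms are polynomials). Next, coercivity: because of the term $\frac{1}{2\lambda}\sum_{i,j}(v_{i,j}-f^h_{i,j})^2 h^2$ (and also the $\frac{1}{2\Dt}\sum_{i,j}(v_{i,j}-u^{k-1}_{i,j})^2 h^2$ term), $E^h(v)\to\infty$ as $\|v\|\to\infty$, since the nonlinear terms are nonnegative. Hence $E^h$ attains a minimum on $\mathbb{R}^{N\times N}$ by Weierstrass. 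This handles the discrete initial data $u^0 = u^h_0$ as the base case and gives $u^k$ at each step.

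\textbf{Uniqueness.} The key point is strict convexity of $E^h$. The map $q\mapsto\sqrt{\epsilon+|q|^2}$ on $\mathbb{R}^2$ is convex (its Hessian is positive semidefinite), so $v\mapsto\sqrt{\epsilon+|\nabla^{\pm}v_{i,j}|^2}$ is convex as a composition with the linear maps $\nabla^{\pm}$; thus $J^h$ is convex. The additional term $\frac{1}{2\Dt}\sum_{i,j}(v_{i,j}-u^{k-1}_{i,j})^2 h^2$ is \emph{strictly} convex in $v$ (it is $\frac{h^2}{2\Dt}\|v-u^{k-1}\|_{\ell^2}^2$ up to the weight), so $E^h = J^h + (\text{strictly convex})$ is strictly convex. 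A strictly convex function has at most one minimizer, so $u^k$ is unique. (In fact, even without the $\Dt$ term, the fidelity term $\frac{1}{2\lambda}\sum_{i,j}(v_{i,j}-f^h_{i,j})^2h^2$ already forces strict convexity, but invoking the $\Dt$ term is cleanest for the time-stepping.)

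\textbf{Equivalence with the scheme.} Finally I would compute the gradient $\nabla E^h(v)=0$ and check it reproduces (\ref{FD2}). Differentiating $\frac{1}{2}\sum_{i,j}\sqrt{\epsilon+|\nabla^+v_{i,j}|^2}h^2$ in the direction of an arbitrary array $\phi$ gives $\frac{1}{2}\langle \nabla^+\phi, \nabla^+ v/\sqrt{\epsilon+|\nabla^+v|^2}\rangle$, and using the summation-by-parts identity stated in the excerpt, $\langle p,\nabla^+ u\rangle = \langle -\divp p, u\rangle$, this equals $-\frac{1}{2}\langle \phi, \divp(\nabla^+v/\sqrt{\epsilon+|\nabla^+v|^2})\rangle$; similarly for the $\nabla^-$ term with $\divm$. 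The derivative of $\frac{1}{2\lambda}\sum(v_{i,j}-f^h_{i,j})^2h^2$ is $\frac{1}{\lambda}\langle\phi, v-f^h\rangle$, and that of the $\Dt$ term is $\frac{1}{\Dt}\langle\phi, v-u^{k-1}\rangle$. Setting the sum to zero for all $\phi$ yields exactly the first line of (\ref{FD2}) pointwise (with $v=u^k$); the Neumann boundary conditions are built into the definitions of $\nabla^{\pm}$ and $\divp,\divm$ via the ghost-point conventions $f^h_{-1,j}=f^h_{0,j}$ etc., so they hold automatically. Conversely, since $E^h$ is convex, any critical point is a global minimizer, so the unique minimizer and the unique solution of (\ref{FD2}) coincide.

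\textbf{Main obstacle.} The only genuinely delicate bookkeeping is the equivalence step: one must carefully match the boundary/ghost-point terms in the divergence operators $\divp,\divm$ (the special cases $i=0$, $i=N-1$, and analogously for $j$) against the discrete Neumann conditions, so that the unconstrained minimization of $E^h$ over all of $\mathbb{R}^{N\times N}$ indeed produces the boundary-condition rows of (\ref{FD2}) with no spurious terms. Verifying the summation-by-parts identity $\langle -\divp p, u\rangle = \langle p, \nabla^+u\rangle$ accounts for these boundary contributions; once that identity is in hand (it is asserted in the excerpt), the rest is a routine first-variation computation.
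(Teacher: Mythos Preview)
Your proposal is correct and follows essentially the same approach as the paper: recognize that the scheme at each time level is the Euler--Lagrange system for the strictly convex discrete energy $E^h$, from which existence and uniqueness follow. The paper's proof is terser---it simply asserts strict convexity of $E^h$ and writes down the subgradient---whereas you spell out coercivity for existence, the source of strict convexity, the summation-by-parts computation, and the boundary bookkeeping; but the underlying argument is identical.
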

\begin{proof}
Consider the following minimization problem:
\begin{align}
\label{min-energy}
\min_{v} E^h(v).
\end{align}
The Euler-Lagrange equation for its minimizer $u^k$ is
$$
\partial E^h(u^k) = \partial J^h(u^k) + \frac{u^k - u^{k-1}}{\Delta t}h^2 = 0.
$$
It is straightforward to verify that the subgradient of $J^h$ at $u^k$ is an array with
\begin{eqnarray}
\label{E^h-subgradient}
&\phantom{ {}={}}&\frac{1}{h^2}\partial J^h(u^k)_{i,j} \nonumber\\
&=& 
- \frac{1}{2}\hbox{ div}^+ \left( \dfrac{\nabla^+ u^k_{i,j}}{\sqrt{\epsilon  + |\nabla^+ u^k_{i,j}|^2}}
\right)
- \frac{1}{2}\hbox{ div}^- \left( \dfrac{\nabla^- u^k_{i,j}}{\sqrt{\epsilon  + |\nabla^- u^k_{i,j}|^2}} \right)
+ \frac{1}{\lambda}(u^k_{i,j}- f^h_{i,j})
\end{eqnarray}
Then we have
\begin{align}
\label{euler-lagrange-for-one-step}
\frac{u^k_{i,j}- u^{k-1}_{i,j}}\Dt & -
\frac{1}{2}\hbox{ div}^+ \left( \dfrac{\nabla^+ u^k_{i,j}}{\sqrt{\epsilon + |\nabla^+ u^k_{i,j}|^2}}
\right)
- \frac{1}{2}\hbox{ div}^- \left( \dfrac{\nabla^- u^k_{i,j}}{\sqrt{\epsilon + |\nabla^- u^k_{i,j}|^2}} \right)  \nonumber\\
&\qquad+ \frac{1}{\lambda}(u^k_{i,j}- f^h_{i,j}) = 0,\quad   0\le i, j\le N-1, 1\le k\le M
\end{align}
which is the equation in (\ref{FD2}).
The existence and uniqueness of $u^k_{i,j}$ follows from the strict convexity of the functional $E^h$.
\end{proof}

The following property is a characterization of the discrete solution of~(\ref{FD2}).
\begin{lemma}
\label{LTidea}
Suppose that array $\{u^k_{i,j}, 0\le i,j\le N-1, 0\le k \le M\}$ is a solution of the finite difference
scheme (\ref{FD2}). Then $u^k_{i,j}$ satisfies the following inequality
\begin{eqnarray}
\label{FDineq}
&& \sum_{i,j} \frac{u^k_{i,j}- u^{k-1}_{i,j}}\Dt( v_{i,j}- u^k_{i,j})
 + \frac{1}{2}\left(\sum_{i,j}\sqrt{\epsilon + |\nabla^+ v_{i,j}|^2} -
\sum_{i,j}\sqrt{\epsilon+ |\nabla^+ u^k_{i,j}|^2}\right) + \cr
&& \frac{1}{2}\left(\sum_{i,j}\sqrt{\epsilon + |\nabla^- v_{i,j}|^2}
 - \sum_{i,j}\sqrt{\epsilon+ |\nabla^- u^k_{i,j}|^2}\right)
+\frac{1}{2\lambda}\sum_{i,j}( v_{i,j} - f^h_{i,j})^2 -
\frac{1}{2\lambda}\sum_{i,j}(u^k_{i,j}- f^h_{i,j}) ^2 \cr
&& \ge  0
\end{eqnarray}
for all arrays $v_{i,j}$ that satisfy the Neumann boundary condition.
On the other hand, if an array $\{u^k_{i,j}, 0\le i, j\le N-1, 0\le k \le M\}$ satisfies the above inequality
for all $v_{i,j}$ satisfying the discrete Neumann boundary condition in (\ref{FD2}), then array
$\{u^k_{i,j}, 0\le i, j\le N-1\}$ is a solution of (\ref{FD2}).
\end{lemma}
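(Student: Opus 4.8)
The plan is to read the inequality (\ref{FDineq}) as a rescaled version of the minimality of $u^k$ for the strictly convex discrete energy $E^h$ of (\ref{discrete-E-functional}), and to invoke the fact — already established in the proof of the preceding existence and uniqueness theorem — that the solution of (\ref{FD2}) at time level $k$ is exactly the unique minimizer of $\min_v E^h(v)$ in (\ref{min-energy}). Thus the whole content of the lemma is the passage between minimality of $E^h$ and the variational inequality (\ref{FDineq}).

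The one computation I would carry out is a completion of squares in the quadratic part of $E^h$. Expanding $\|v-u^{k-1}\|^{2}$ about $u^k$ in (\ref{discrete-E-functional}) and matching the square-root and fidelity contributions with the definition (\ref{discrete-J-functional}) of $J^h$ produces the identity
\begin{equation}
\label{key-identity-LT}
h^{2}\,\big(\text{left-hand side of }(\ref{FDineq})\big)=E^h(v)-E^h(u^k)-\frac{1}{2\Dt}\,\|v-u^k\|^{2},
\end{equation}
valid for every array $v$ satisfying the discrete Neumann condition: the first sum in (\ref{FDineq}) accounts for the cross term generated by completing the square, and the three remaining groups of terms equal $h^{-2}\big(J^h(v)-J^h(u^k)\big)$. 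This identity is the crux; everything after it is soft convex analysis.

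Granting (\ref{key-identity-LT}), both directions follow quickly. For the forward implication, if $u^k$ solves (\ref{FD2}) then $u^k$ minimizes $E^h$ by the preceding theorem; since $\epsilon>0$ keeps $J^h$ convex while the quadratic term $\frac{1}{2\Dt}\|v-u^{k-1}\|^{2}$ is $\frac{1}{\Dt}$-strongly convex, $E^h$ is $\frac{1}{\Dt}$-strongly convex, so its minimizer satisfies $E^h(v)\ge E^h(u^k)+\frac{1}{2\Dt}\|v-u^k\|^{2}$ for all admissible $v$; inserting this into (\ref{key-identity-LT}) gives $h^{2}\,(\text{LHS of }(\ref{FDineq}))\ge0$, which is (\ref{FDineq}). (Equivalently, (\ref{euler-lagrange-for-one-step}) together with (\ref{E^h-subgradient}) identifies the gradient $\partial J^h(u^k)$ with $-\frac{h^{2}}{\Dt}(u^k-u^{k-1})$, and the first-order convexity inequality $J^h(v)\ge J^h(u^k)+\sum_{i,j}\partial J^h(u^k)_{i,j}(v_{i,j}-u^k_{i,j})$ rearranges, after dividing by $h^{2}$, to (\ref{FDineq}).) For the converse, if the array obeys (\ref{FDineq}) for all admissible $v$, then (\ref{key-identity-LT}) forces $E^h(v)-E^h(u^k)\ge\frac{1}{2\Dt}\|v-u^k\|^{2}\ge0$ for every $v$, so $u^k$ is the unique minimizer of $E^h$, and the Euler--Lagrange computation in the proof of the preceding theorem shows this minimizer satisfies (\ref{euler-lagrange-for-one-step}), i.e. the equation of (\ref{FD2}); the Neumann and initial conditions are inherited from the hypotheses. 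I expect the only friction to be bookkeeping — keeping the $h^{2}$ weights and the discrete norm straight when deriving (\ref{key-identity-LT}) — together with the observation that the discrete Neumann condition is built into $\nabla^{\pm}$ through the ghost-point reflection, so that $u^k$ and every competitor $v$ lie in the same linear space and the comparisons above are legitimate.
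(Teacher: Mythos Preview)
Your proposal is correct and rests on the same idea as the paper --- that $u^k$ solves (\ref{FD2}) iff it minimizes $E^h$, and that convexity of $J^h$ turns this into the variational inequality (\ref{FDineq}). The paper's proof is terser: it simply writes $-\frac{u^k-u^{k-1}}{\Dt}h^2=\partial J^h(u^k)$ and invokes the subgradient inequality $J^h(v)-J^h(u^k)\ge\langle\partial J^h(u^k),v-u^k\rangle$, which after dividing by $h^2$ is exactly (\ref{FDineq}); this is the route you already mention parenthetically.

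Your identity (\ref{key-identity-LT}) is a genuine, if modest, improvement on the paper's argument: the paper's proof only treats the forward implication and is silent on the converse, whereas your identity makes the converse immediate (if (\ref{FDineq}) holds for all $v$ then $E^h(v)\ge E^h(u^k)$, so $u^k$ minimizes $E^h$ and hence solves (\ref{FD2})). The strong-convexity sharpening $E^h(v)\ge E^h(u^k)+\frac{1}{2\Dt}\|v-u^k\|^2$ is pleasant but not needed --- plain convexity of $J^h$ already gives the forward direction via the subgradient inequality, and for the converse you only use $\frac{1}{2\Dt}\|v-u^k\|^2\ge0$.
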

\begin{proof}
Since $u^k$ is the minimizer of $E^h$, we have the Euler-Lagrange equation
$$
0 = \partial E^h(u^k)
$$
i.e.,
$$
-\frac{u^{k} - u^{k-1}}{\Delta t} h^2 = \partial J^h(u^k).
$$
By the definition of sub-gradient, for any array $v^h_{i,j}$
\begin{eqnarray*}
   - \sum_{i,j} \frac{u^{k}_{i,j} - u^{k-1}_{i,j}}{\Delta t}(v^h_{i,j} - u^k_{i,j}) h^2 \le J^h(v^h) - J^h(u^k).
\end{eqnarray*}
Rearranging terms in the above inequality and the result follows.
\end{proof}

The variation of our scheme is also monotone in the following sense.
\begin{lemma}
\label{variation-monotonicity-lemma}
Define discrete function $u^h(t)$ by
\begin{equation}
    \label{u-t-def}
u^h(t) := \frac{t-t_{k-1}}\Dt u^k + \frac{t_k - t}\Dt u^{k-1}, \qquad t_{k-1}\leq t \leq t_k.
\end{equation}
Then
\begin{align}
\label{variation-monotonicity}
J^h(u^k) \le J^h(u^h(t)), \qquad t_{k-1} \le t \le t_k.
\end{align}
\end{lemma}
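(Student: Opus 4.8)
The plan is to exploit the fact, established in the existence-and-uniqueness theorem above, that $u^k$ is the \emph{global minimizer} over all arrays $v$ of the strictly convex functional $E^h(v)=J^h(v)+\frac{1}{2\Dt}\sum_{i,j}(v_{i,j}-u^{k-1}_{i,j})^2h^2$ (the discrete Neumann condition is built into the ghost-point conventions defining $\nabla^{\pm}$, so no constraint is active and every array is admissible). First I would rewrite the interpolant in (\ref{u-t-def}) as a convex combination: setting $\theta=(t-t_{k-1})/\Dt$, one has $\theta\in[0,1]$ for $t_{k-1}\le t\le t_k$ and, since $\frac{t_k-t}{\Dt}=1-\theta$, $u^h(t)=\theta u^k+(1-\theta)u^{k-1}$. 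In particular $u^h(t)$ is again an admissible array and $u^h(t)-u^{k-1}=\theta(u^k-u^{k-1})$.

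Next I would substitute $v=u^h(t)$ into the minimality inequality $E^h(u^k)\le E^h(u^h(t))$. The proximal penalty term transforms cleanly, $\sum_{i,j}(u^h_{i,j}(t)-u^{k-1}_{i,j})^2h^2=\theta^2\sum_{i,j}(u^k_{i,j}-u^{k-1}_{i,j})^2h^2$, and since $\theta\in[0,1]$ implies $\theta^2\le 1$, the penalty at $u^h(t)$ is no larger than the penalty at $u^k$. Thus from $J^h(u^k)+\frac{1}{2\Dt}\sum_{i,j}(u^k_{i,j}-u^{k-1}_{i,j})^2h^2\le J^h(u^h(t))+\frac{\theta^2}{2\Dt}\sum_{i,j}(u^k_{i,j}-u^{k-1}_{i,j})^2h^2$ and then bounding the right-hand penalty from above by the left-hand one ($\theta^2\le 1$), the quadratic terms cancel and there remains $J^h(u^k)\le J^h(u^h(t))$, which is (\ref{variation-monotonicity}).

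There is essentially no hard step here: the only thing to notice is that the quadratic proximal penalty $\|v-u^{k-1}\|^2$ is monotone nondecreasing as $v$ travels from $u^{k-1}$ toward $u^k$ along the segment, while $u^k$ minimizes the sum $J^h+\text{penalty}$, so $J^h$ itself must attain its smallest value on the segment at the far endpoint $u^k$. Equivalently one could argue by convexity of $\theta\mapsto J^h(\theta u^k+(1-\theta)u^{k-1})$ together with the Euler--Lagrange relation $-\frac{h^2}{\Dt}(u^k-u^{k-1})\in\partial J^h(u^k)$ from (\ref{euler-lagrange-for-one-step}), which gives that the left derivative of this map at $\theta=1$ equals $-\frac{h^2}{\Dt}\sum_{i,j}(u^k_{i,j}-u^{k-1}_{i,j})^2\le 0$; but the minimization argument above is shorter and avoids subdifferential calculus. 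The single bookkeeping point worth stating explicitly in the write-up is that $u^h(t)$ genuinely belongs to the admissible class, which is immediate since it is a convex combination of the two admissible arrays $u^{k-1}$ and $u^k$.
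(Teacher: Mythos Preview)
Your proposal is correct and follows essentially the same approach as the paper: both use that $u^k$ minimizes $E^h(v)=J^h(v)+\frac{1}{2\Dt}\|v-u^{k-1}\|^2$, apply this with $v=u^h(t)$, and then observe that the quadratic penalty at $u^h(t)$ is $\theta^2\|u^k-u^{k-1}\|^2\le\|u^k-u^{k-1}\|^2$, so the penalty terms drop out and $J^h(u^k)\le J^h(u^h(t))$ remains. Your parametrization by $\theta=(t-t_{k-1})/\Dt$ is a slightly cleaner bookkeeping of the same computation the paper carries out pointwise.
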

\begin{proof}
Since $u^k$ is the minimizer of the following functional
$$
E^h(v) = J^h(v) + \frac{1}{2\Dt}\|u^{k-1}-v\|^2
$$
we have
\begin{equation}
\label{compare-energy}
J^h(u^k) + \frac{1}{2\Dt}\|u^{k-1}-u^k\|^2
\leq J^h(u^h(t)) + \frac{1}{2\Dt}\|u^{k-1}-u^h(t)\|^2.
\end{equation}
For each term in the summation of the $L^2$ square term on the right-hand side,
\begin{align*}
\left|u^{k-1} - u^h(t)\right| &= \left|u^{k-1} - \frac{t-t_{k-1}}\Dt u^k + \frac{t_k - t}\Dt u^{k-1}\right|\\
&= \frac{t-t_{k-1}}\Dt\left|u^k - u^{k-1}\right|\leq \left|u^k - u^{k-1}\right|.
\end{align*}
That is
$$
\frac{1}{2\Dt}\|u^{k-1}-u^h(t)\|^2 \leq \frac{1}{2\Dt}\|u^{k-1}-u^{k-1}\|^2.
$$
With the above inequality, we conclude the result from~(\ref{compare-energy}).
\end{proof}

The following result shows that the computation of finite difference scheme (\ref{FD2}) is stable.
\begin{theorem}
\label{FDstable}
Let $\{u^{k}_f, 0\le k\le M\}$ be the solution of the system of nonlinear equations~(\ref{FD2}) associated with $f^h$
with initial value $u^0_f$. Similarly,
let $\{u_g^k, 0\le k\le M\}$ be the corresponding solution of (\ref{FD2}) associated with $g^h$ with initial value  $u_g^0$.  Then
\begin{equation}
\label{stableinq}
\|u_f^k - u_g^k\| \leq \max\{\|u_f^0 - u_g^0\|, \|f^h - g^h\|\}, \qquad 1 \leq k \leq M.
\end{equation}
\end{theorem}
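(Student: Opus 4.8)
The plan is to test the difference of the two discrete equations against the difference of the two solutions, and to use the $\frac1\lambda$-strong convexity of $J^h$ to control the contribution of the data mismatch. Throughout, set $e^k := u_f^k - u_g^k$ and $d := f^h - g^h$, and use the discrete $L^2$ inner product and norm.

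First I would record a strengthened form of the characterization in Lemma~\ref{LTidea}. Since $J^h$ in (\ref{discrete-J-functional}) is the sum of the convex functional $v\mapsto\frac12\sum_{i,j}\sqrt{\epsilon+|\nabla^+v_{i,j}|^2}h^2+\frac12\sum_{i,j}\sqrt{\epsilon+|\nabla^-v_{i,j}|^2}h^2$ and the $\frac1\lambda$-strongly convex term $\frac1{2\lambda}\|v-f^h\|^2$, the defining subgradient inequality used in the proof of Lemma~\ref{LTidea} improves to $-\sum_{i,j}\frac{u^k_{i,j}-u^{k-1}_{i,j}}{\Dt}(v_{i,j}-u^k_{i,j})h^2+\frac1{2\lambda}\|v-u^k\|^2\le J^h(v)-J^h(u^k)$ for every array $v$ satisfying the Neumann boundary condition. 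I would apply this once to the $f$-problem (data $f^h$) with test array $v=u_g^k$ and once to the $g$-problem (data $g^h$) with test array $v=u_f^k$, and add the two inequalities. The two total-variation-type parts of $J^h(v)-J^h(u^k)$ occur with opposite signs and cancel, while the fidelity parts combine, via the elementary identity $(b-p)^2-(a-p)^2+(a-q)^2-(b-q)^2=2(q-p)(b-a)$ with $a=u_f^k,\,b=u_g^k,\,p=f^h,\,q=g^h$, into exactly $\frac1\lambda\langle d,e^k\rangle$. The outcome is $\frac1\Dt\langle e^k-e^{k-1},e^k\rangle+\frac1\lambda\|e^k\|^2\le\frac1\lambda\langle d,e^k\rangle$.

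Next I would simplify using the polarization inequality $\langle e^k-e^{k-1},e^k\rangle\ge\frac12\big(\|e^k\|^2-\|e^{k-1}\|^2\big)$ and the Cauchy--Schwarz bound $\langle d,e^k\rangle\le\|d\|\,\|e^k\|$, obtaining $\big(\frac1{2\Dt}+\frac1\lambda\big)\|e^k\|^2\le\frac1{2\Dt}\|e^{k-1}\|^2+\frac1\lambda\|d\|\,\|e^k\|$. I would then finish by induction on $k$. Put $R:=\max\{\|u_f^0-u_g^0\|,\|f^h-g^h\|\}$; the base case $\|e^0\|\le R$ is trivial. Assuming $\|e^{k-1}\|\le R$, the displayed inequality says $q(x)\le 0$ at $x=\|e^k\|$, where $q(x):=\big(\frac1{2\Dt}+\frac1\lambda\big)x^2-\frac1\lambda\|d\|x-\frac1{2\Dt}\|e^{k-1}\|^2$. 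Since $\|d\|\le R$ and $\|e^{k-1}\|\le R$, one gets $q(R)\ge\big(\frac1{2\Dt}+\frac1\lambda\big)R^2-\frac1\lambda R^2-\frac1{2\Dt}R^2=0$; as $q$ is an upward parabola with $q(0)=-\frac1{2\Dt}\|e^{k-1}\|^2\le 0$, its larger root is $\le R$, so $\|e^k\|\le R$. This closes the induction and gives (\ref{stableinq}).

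The point requiring the most care is that Lemma~\ref{LTidea} as literally stated does not suffice: dropping the $\frac1\lambda\|e^k\|^2$ term leaves only $\|e^k\|^2-\|e^{k-1}\|^2\le\frac{2\Dt}\lambda\|d\|\,\|e^k\|$, and iterating that produces a bound growing with $k$ (and with $T/\lambda$), not the sharp maximum bound. So the strong convexity of the fidelity term must be retained all the way through; once it is, the cancellation of the total-variation parts and the concluding quadratic estimate are routine.
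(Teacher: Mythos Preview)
Your proof is correct and self-contained, but it proceeds along a different line from the paper's argument. The paper first absorbs the two quadratic penalties $\frac1{2\lambda}\|v-f^h\|^2$ and $\frac1{2\Dt}\|v-u_f^{k-1}\|^2$ into a single fidelity term, rewriting $E^h$ as a ROF-type functional with data $k_1 f^h+k_2 u_f^{k-1}$ where $k_1=\frac{1/\lambda}{1/\lambda+1/\Dt}$, $k_2=1-k_1$. It then invokes the known non-expansiveness of the ROF minimizer in the data (cited from \cite{LW10}, \cite{LLM12}) to get $\|u_f^k-u_g^k\|\le k_1\|f^h-g^h\|+k_2\|u_f^{k-1}-u_g^{k-1}\|$, and the max bound follows immediately from $k_1+k_2=1$ and induction. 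Your route instead stays at the level of the subgradient characterization: you sharpen Lemma~\ref{LTidea} using the $\tfrac1\lambda$-strong convexity of $J^h$, symmetrize by testing each problem with the other solution, and reduce to a one-step quadratic inequality in $\|e^k\|$. The paper's approach is shorter but relies on an external contraction lemma; yours is longer but fully elementary and makes the role of strong convexity explicit, which is exactly the point you flag in your final paragraph.
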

\begin{proof}
We prove by induction. It is obvious true for $k=0$. Assume the inequality holds for $k-1$. Rearrange the $L^2$ terms in~(\ref{min-energy}).
We have $u_f^k$ is the minimizer of the following problem.
\begin{align}
\label{equiv-min-energy}
\min_{v}  \frac{h^2}{2}\sum_{i,j}\sqrt{\epsilon + |\nabla^+ v_{i,j}|^2} +
\frac{h^2}{2}\sum_{i,j}\sqrt{\epsilon + |\nabla^- v_{i,j}|^2} + (\mu_1 + \mu_2)\left\|v - \left(k_1 f^h + k_2 u_f^{k-1}\right)\right\|^2
\end{align}
where $\mu_1 = 1/(2\lambda), \mu_2 = 1/2\Dt$, and $k_1 =
\mu_1/(\mu_1+\mu_2), k_2 = \mu_2 / (\mu_1+\mu_2)$. By standard
stability property of the minimization problem like
(\ref{equiv-min-energy})(cf. \cite{LW10} or Theorem 3.1 in
\cite{LLM12})
\begin{align*}
\left\|u_f^k - u_g^k\right\| &\leq \left\|\left(k_1 f^h + k_2 u_f^{k-1}\right) - \left(k_1 g^h + k_2 u_g^{k-1}\right)\right\|\\
&\leq k_1\|f^h - g^h\| + k_2 \left\|u_f^{k-1} - u_g^{k-1}\right\| \\
&\leq \max\left\{\|f^h - g^h\|,  \left\|u_f^{k-1} - u_g^{k-1}\right\|\right\}\\
&\leq \max\left\{\|f^h - g^h\|,  \left\|u_f^0 - u_g^0\right\|\right\}.
\end{align*}
This completes the proof.
\end{proof}
\begin{remark}
As a direct deduction, if $g^h = u_g^0 = 0$, the solution $u_g^k$ is also zero for all $k$, then
\begin{align}
\label{l2-bound}
\|u_f^k\| \leq \max\{\|u_f^0\|, \|f^h\|\}, \qquad 1 \leq k \leq M.
\end{align}
\end{remark}

The following lemma discusses the regularity of the discrete solution $u^k$. In image analysis, the input image usually does not have much regularity.
For example, most natural images do not even have weak derivatives.
 Therefore, to model images, we introduce the notation of Lipschitz space, and treat images as functions in this space.
\begin{definition}
\label{def-Lip2alpha}
Let $\alpha\in (0, 1]$ be a real number.
A function $f\in \opLip(\alpha, L^2(\Omega))$  if $f\in L^2(\Omega)$ and the following quantity
\begin{equation}
\label{Lip2alpha}
|f|_{\opLip(\alpha,L^2(\Omega))}:=\sup_{|h|\le 1} \frac{ \| f(\cdot) - f(\cdot+h)\|_{L^2(\Omega_h)}}{|h|^\alpha}
\end{equation}
is finite, where $\Omega_h:=\{x\in \Omega, x+th\in \Omega, \forall t\in [0, 1]\}$. We let
$\|f\|_{\opLip(\alpha,L^2(\Omega))}= \|f\|_{L^2(\Omega)}+|f|_{\opLip(\alpha,L^2(\Omega))}$.
\end{definition}

The parameter $\alpha$ is related to the ``smoothness'' of functions in the Lipschitz space.
Smoother functions belong to Lipschitz spaces with larger $\alpha$ values. For example, a function of bounded variation is a function
in $\opLip(1,L^2(\Omega))$ (cf. \cite{CDPX99}).

\begin{lemma}
\label{l2-smoothness}
Define translation operators $T_{1,0}$ and $T_{0,1}$ by
\begin{eqnarray*}
(T_{1,0}u^k)_{i,j} = u^k_{i+1, j}\qquad 0\le i,j \le N-1\\
(T_{0,1}u^k)_{i,j} = u^k_{i, j+1}\qquad 0\le i,j \le N-1
\end{eqnarray*}
Then if $u_0$ and $f$ in $\hbox{Lip}(\alpha, L^2(\Omega))$,
\begin{align*}
\left\|T_{1,0}u^k - u^k\right\| \leq (\|u^0\|_{\opLip(\alpha, L^2)} + \|f\|_{\opLip(\alpha, L^2)})h^\alpha
\end{align*}
and similarly
$$
\left\|T_{0,1}u^k - u^k\right\| \leq (\|u^0\|_{\opLip(\alpha, L^2)} + \|f\|_{\opLip(\alpha, L^2)})h^\alpha.
$$
\end{lemma}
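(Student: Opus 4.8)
\emph{Proof proposal.}
The plan is to induct on the time level $k$, using the one-step stability mechanism from the proof of Theorem~\ref{FDstable}. Recall that there $u^k$ (with datum $f^h$) was identified as the minimizer of $R(v)+(\mu_1+\mu_2)\|v-z^{k-1}\|^2$, where
\[
R(v)=\tfrac{h^2}{2}\sum_{i,j}\sqrt{\epsilon+|\nabla^+ v_{i,j}|^2}+\tfrac{h^2}{2}\sum_{i,j}\sqrt{\epsilon+|\nabla^- v_{i,j}|^2},\qquad z^{k-1}=k_1 f^h+k_2 u^{k-1},
\]
so the one-step solution operator $z^{k-1}\mapsto u^k$ depends $1$-Lipschitzly on the discrete $L^2$ target $z^{k-1}$ (this is the stability used in Theorem~\ref{FDstable}). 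The key point I would prove is that this operator commutes with the discrete shift $T_{1,0}$, i.e.\ that $T_{1,0}u^k$ is the one-step solution for the shifted target $T_{1,0}z^{k-1}=k_1 T_{1,0}f^h+k_2 T_{1,0}u^{k-1}$. Granting this, $1$-Lipschitzness and the triangle inequality give
\[
\|T_{1,0}u^k-u^k\|\le k_1\|T_{1,0}f^h-f^h\|+k_2\|T_{1,0}u^{k-1}-u^{k-1}\|\le\max\{\|T_{1,0}f^h-f^h\|,\,\|T_{1,0}u^{k-1}-u^{k-1}\|\},
\]
the last step because $k_1,k_2\ge0$ and $k_1+k_2=1$.

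The commutation of the one-step solver with $T_{1,0}$ is precisely where the symmetrized difference operator $\tfrac12\nabla^++\tfrac12\nabla^-$ of the scheme is used, and it is the step I expect to be the main obstacle. The operators $\nabla^\pm,\divp,\divm$ are one-sided near $\partial\Omega$, and $T_{1,0}$ is defined via the Neumann convention $u_{N,j}=u_{N-1,j}$ (so $(T_{1,0}v)_{N-1,j}=v_{N-1,j}$), so $R$ is not literally shift-invariant cell by cell. I would deal with this by passing to the even reflection of all arrays across $\partial\Omega$ onto the doubled, periodized grid, where the Neumann condition is automatic and the symmetrized functional becomes genuinely shift-invariant; one then carries out the commutation there and restricts back. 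The delicate point to check is that this reflection respects the splitting of $R$ into its $\nabla^+$ and $\nabla^-$ halves.

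It remains to estimate the data increments and to run the induction. Since $f^h_{i,j}=h^{-2}\int_{\Omega_{i,j}}f$, for $i\le N-2$ one has $(T_{1,0}f^h)_{i,j}-f^h_{i,j}=h^{-2}\int_{\Omega_{i,j}}(f(x+(h,0))-f(x))\,dx$, while the $i=N-1$ term vanishes; Cauchy--Schwarz on each cell (using $|\Omega_{i,j}|=h^2$) gives
\[
\|T_{1,0}f^h-f^h\|^2\le\sum_{i\le N-2,\,j}\int_{\Omega_{i,j}}(f(x+(h,0))-f(x))^2\,dx\le\|f(\cdot)-f(\cdot+(h,0))\|_{L^2(\Omega_{(h,0)})}^2\le|f|_{\opLip(\alpha,L^2)}^2\,h^{2\alpha},
\]
since $\bigcup_{i\le N-2,\,j}\Omega_{i,j}=\Omega_{(h,0)}$ and $|(h,0)|=h\le1$; the same computation with $f$ replaced by $u_0$ and $f^h$ by $u^0=\Ph u_0$ yields the base case $\|T_{1,0}u^0-u^0\|\le\|u^0\|_{\opLip(\alpha,L^2)}h^\alpha$. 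If the bound holds at level $k-1$, the previous two displays combine (with $|f|_{\opLip(\alpha,L^2)}\le\|f\|_{\opLip(\alpha,L^2)}$) to give $\|T_{1,0}u^k-u^k\|\le(\|u^0\|_{\opLip(\alpha,L^2)}+\|f\|_{\opLip(\alpha,L^2)})h^\alpha$, closing the induction; the estimate for $T_{0,1}$ follows verbatim after swapping the two coordinate directions.
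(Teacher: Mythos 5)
Your overall strategy (a one-step contraction in the shifted increment, closed by induction, plus the cell-average estimate of the data increments) mirrors the paper's, and your estimate of $\|T_{1,0}f^h-f^h\|$ and of the base case is correct and is in fact more detailed than what the paper writes down. But the step you yourself flag as the main obstacle --- that the one-step solution operator commutes with $T_{1,0}$ --- is a genuine gap, and I do not think it can be repaired in the form you propose. The regularizer $R$ is simply not invariant under $T_{1,0}$: with the Neumann padding, $T_{1,0}$ discards the column $i=0$ and duplicates the column $i=N-1$, so it is not even injective on arrays, and $R(T_{1,0}w)$ loses the contribution of $(\nabla^{+}w)_{0,j}$ entirely while inserting spurious zero differences at the right edge. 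A non-invertible map cannot be a symmetry of the minimization problem, so $T_{1,0}u^k$ is \emph{not} the minimizer for the shifted target, and the $1$-Lipschitz bound $\|T_{1,0}u^k-u^k\|\le\|T_{1,0}z^{k-1}-z^{k-1}\|$ does not follow. The reflection-to-a-periodized-double-grid fix does not close this: even if one verifies that the symmetrized functional on the doubled grid restricts correctly to reflection-symmetric arrays (itself delicate, since reflection exchanges $\nabla^{+}$ and $\nabla^{-}$), the shift of a symmetric array is no longer symmetric, so its restriction to the original grid is again not the solution of a shifted problem there.

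The paper's proof sidesteps exactly this difficulty by never asking for exact shift-equivariance. It differences the Euler--Lagrange equations at $(i+1,j)$ and $(i,j)$, multiplies by $u^k_{i+1,j}-u^k_{i,j}$, sums, and uses the monotonicity inequality $\bigl(x/\sqrt{\epsilon+|x|^2}-y/\sqrt{\epsilon+|y|^2}\bigr)\cdot(x-y)\ge 0$ to show the nonlinear terms contribute with a favorable sign; the boundary leftovers that break shift-invariance appear explicitly (e.g.\ the term $-\sum_j |\nabla^{+}u^k_{0,j}|^2/\sqrt{\epsilon+|\nabla^{+}u^k_{0,j}|^2}$) and are themselves nonpositive, so they can be discarded. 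This yields the recursion $(\frac{1}{\Dt}+\frac{1}{\lambda})\|T_{1,0}u^k-u^k\|^2\le\frac{1}{\Dt}\|T_{1,0}u^{k-1}-u^{k-1}\|^2+\frac{1}{\lambda}\|T_{1,0}f^h-f^h\|^2$, after which the induction runs exactly as in your last paragraph. If you want to keep a variational flavor, the robust substitute for your commutation claim is to compare energies of $u^k$ and of suitable competitors built from shifted arrays and track the boundary discrepancies with a sign, which is essentially what the paper's differencing argument does in disguised form.
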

\begin{proof}
We only prove the first inequality. Recall the Euler-Lagrange equation that
\begin{align*}
\frac{u^{k-1} - u^{k}}{\Delta t} h^2 &= \partial J^h(u^{k}).
\end{align*}
We write the equation element-wisely as
\begin{align*}
\frac{u^k_{i,j}- u^{k-1}_{i,j}}{\Dt} = \frac{1}2\divp\left(\frac{\nabla^+ u^k_{i,j}}{\sqrt{\epsilon + |\nabla^+ u^k_{i,j}|^2}}\right) +
\frac{1}2 \divm\left(\frac{\nabla^- u^k_{i,j}}{\sqrt{\epsilon + |\nabla^- u^k_{i,j}|^2}}\right) - \frac{1}{\lambda}(u^k_{i,j} - f^h_{i,j}).
\end{align*}
Then subtracting the equation at index $(i+1,j)$ from the same equation at index $(i,j)$ for $0 \leq i \le N-2$, we obtain
\begin{align}
\label{difference-at-one-point}
\frac{u^k_{i+1,j}- u^k_{i,j}}{\Dt} - \frac{u^{k-1}_{i+1,j}-u^{k-1}_{i,j}}{\Dt} &= F(\nabla^+ u^k_{i+1,j}, \nabla^+ u^k_{i,j})
\nonumber + F(\nabla^- u^k_{i+1,j}, \nabla^- u^k_{i,j}) \\
&\qquad- \frac{1}\lambda(u^{k}_{i+1,j}-u^{k}_{i,j}) + \frac{1}\lambda(f^h_{i+1,j} - f^h_{i,j})
\end{align}
where $F(\nabla^+ u^k_{i+1,j}, \nabla^+ u^k_{i,j})$ is defined by
$$
F(\nabla^+ u^k_{i+1,j}, \nabla^+ u^k_{i,j}) = \frac{1}2\divp\left(\frac{\nabla^+ u^k_{i+1,j}}{\sqrt{\epsilon +
|\nabla^+ u^k_{i+1,j}|^2}}\right) - \frac{1}2\divp\left(\frac{\nabla^+ u^k_{i,j}}{\sqrt{\epsilon + |\nabla^+ u^k_{i,j}|^2}}\right).
$$
Equation~(\ref{difference-at-one-point}) only holds for $0\le i \le N-2$, $0\le j\le N-1$.
Although equation~(\ref{difference-at-one-point}) is not defined for $i=N-1$,
we can set $u^k_{N+1,j} = u^k_{N,j}$ and $f_{N+1,j} = f_{N,j}$, and equation~(\ref{difference-at-one-point}) still holds.
We multiply~(\ref{difference-at-one-point}) by $u^k_{i+1,j} - u^k_{i,j}$
and add all resulting equations for $0\le i,j \le N-1$ to have
\begin{align*}
&\phantom { {}={} } \frac{1}\Dt\sum_{i,j=0}^{N-1}(u^k_{i+1,j}-u^k_{i,j})^2 \\
&=  \frac{1}\Dt\sum_{i,j=0}^{N-1}(u^{k-1}_{i+1,j}-u^{k-1}_{i,j})(u^k_{i+1,j}-u^k_{i,j}) \\
&\qquad+ \sum_{i,j=0}^{N-1}F(\nabla^+u^k_{i+1,j}, \nabla^+u^k_{i,j})(u^k_{i+1,j} - u^k_{i.j})
+ \sum_{i,j=0}^{N-1}F(\nabla^-u^k_{i+1,j}, \nabla^-u^k_{i,j})(u^k_{i+1,j} - u^k_{i.j}) \\
&\qquad - \sum_{i,j=0}^{N-1}\frac{1}\lambda(u^{k}_{i+1,j}-u^{k}_{i,j})^2 + \sum_{i,j=0}^{N-1}\frac{1}
\lambda(f^h_{i+1,j} - f^h_{i,j})(u^k_{i+1,j}-u^k_{i,j}).
\end{align*}
We show next that the second term is no greater than zero. The third term can be proved to be non-positive similarly. By definition of $F$,
\begin{align*}
&\phantom { {}={} }\sum_{i,j=0}^{N-1}F(\nabla^+ u^k_{i+1,j}, \nabla^+ u^k_{i,j})(u^k_{i+1,j}-u^k_{i,j})\\
&= \sum_{i,j=0}^{N-1}\frac{1}2\divp\left(\frac{\nabla^+ u^k_{i+1,j}}{\sqrt{\epsilon + |\nabla^+ u^k_{i+1,j}|^2}}\right)
(u^k_{i+1,j}-u^k_{i,j}) - \sum_{i,j=0}^{N-1}\frac{1}2\divp\left(\frac{\nabla^+ u^k_{i,j}}{\sqrt{\epsilon + |\nabla^+ u^k_{i,j}|^2}}\right)
(u^k_{i+1,j}-u^k_{i,j}).
\end{align*}
We use the discrete divergence operators and gradient operators to get
\begin{align*}
&\phantom { {}={} }\sum_{i.j=0}^{N-1}F(\nabla^+ u^k_{i+1,j}, \nabla^+ u^k_{i,j})(u^k_{i+1,j}-u^k_{i,j})\\
&= \frac{1}2\sum_{i,j=0}^{N-1}\left(\divp\left(\frac{\nabla^+u^k_{i+1,j}}{\sqrt{\epsilon + |\nabla^+u^k_{i+1,j}|^2}}\right) -
\divp\left(\frac{\nabla^+u^k_{i,j}}{\sqrt{\epsilon + |\nabla^+u^k_{i,j}|^2}}\right)\right)(u^k_{i+1,j}-u^k_{i,j})\\
&= -\frac{1}2\sum_{i,j=0}^{N-1}\left(\left(\frac{\nabla^+u^k_{i+1,j}}{\sqrt{\epsilon + |\nabla^+u^k_{i+1,j}|^2}}\right) -
\left(\frac{\nabla^+u^k_{i,j}}{\sqrt{\epsilon + |\nabla^+u^k_{i,j}|^2}}\right)\right)(\nabla^+u^k_{i+1,j}-\nabla^+u^k_{i,j})\\
&\qquad - \sum_{j=0}^{N-1}\frac{|\nabla^+ u^k_{0,j}|^2}{\sqrt{\epsilon + |\nabla^+ u^k_{0,j}|^2}}
\end{align*}
Each term in the first sum is non-negative due to the following inequality: for any $x, y\in {\bf R}^2$,
$$
\left(\frac{x}{\sqrt{\epsilon + |x|^2}} - \frac{y}{\sqrt{\epsilon+|y|^2}}\right)(x-y) \ge 0
$$
which can be verified easily. 
By similar arguments, one has
$$
\sum_{i,j=0}^{N-1}F(\nabla^-u^k_{i+1,j},\nabla^-u^k_{i,j})(u^k_{i+1,j}-u^k_{i,j}) \leq 0
$$
It follows
\begin{align*}
\frac{1}\Dt\sum_{i,j=0}^{N-1}(u^k_{i+1,j}-u^k_{i,j})^2
&\le  \frac{1}\Dt\sum_{i,j=0}^{N-1}(u^{k-1}_{i+1,j}-u^{k-1}_{i,j})(u^k_{i+1,j}-u^k_{i,j}) \\
&\qquad - \sum_{i,j=0}^{N-1}\frac{1}\lambda(u^{k}_{i+1,j}-u^{k}_{i,j})^2 + \sum_{i,j=0}^{N-1}\frac{1}
\lambda(f^h_{i+1,j} - f^h_{i,j})(u^{k}_{i+1,j}-u^{k}_{i,j}).
\end{align*}
We rewrite the sums in form of discrete integrals and discrete inner products, and apply the arithmetic-geometric inequality
\begin{align*}
\frac{1}\Dt \| T_{1,0}u^k - u^k\|^2
&\leq \frac{1}\Dt \left\langle T_{1,0}u^{k-1} - u^{k-1}, T_{1,0}u^k - u^k\right\rangle \\
&\qquad- \frac{1}\lambda \|T_{1,0}u^k - u^k\|^2 + \frac{1}\lambda\left\langle T_{1,0}f- f, T_{1,0}u^k - u^k\right\rangle \\
&\leq \frac{1}{2\Dt}\|T_{1,0}u^{k-1} - u^{k-1}\|^2 + \frac{1}{2\Dt}\|T_{1,0}u^k - u^k\|^2\\
&\qquad - \frac{1}{2\lambda}\|T_{1,0}u^k - u^k\|^2 + \frac{1}{2\lambda}\|T_{1,0}f - f\|^2.
\end{align*}
Rearrange and combine similar terms to have
\begin{equation}
\label{recursive-inequality}
(\frac{1}\Dt + \frac{1}\lambda)\|T_{1,0}u^k - u^k\|^2 \leq \frac{1}\Dt\|T_{1,0}u^{k-1} - u^{k-1}\|^2 + \frac{1}\lambda\|T_{1,0}f - f\|^2.
\end{equation}
We now prove the following inequality by induction
\begin{equation}
\label{real-result}
\|T_{1,0}u^k - u^k\|^2 \leq \max\{\|T_{1,0}u^0 - u^0\|^2, \|T_{1,0}f - f\|^2\}.
\end{equation}
It is obvious true for $k=0$. Assuming the inequality holds for $k-1$, one can easily see that it also holds for
$k$ by~(\ref{recursive-inequality}). Therefore, one has
\begin{align*}
\|T_{1,0}u^k - u^k\| &\leq \|T_{1,0}u^0 - u^0\| + \|T_{1,0}f - f\|
\leq(\|u^0\|_{\opLip(\alpha, L^2)} + \|f\|_{\opLip(\alpha, L^2)})h^{\alpha}.
\end{align*}
This completes the proof.
\end{proof}

\section{Main Result and Its Proof}
In this section, we shall show that the piecewise linear interpolation of the
solution vector of the finite difference scheme (\ref{FD2}) converges weakly to the
solution of the gradient flow~(\ref{epsTEROF2}).
We assume  that the array $\{u^k_{i,j}, 0\le i, j\le N-1, 0\le k\le M\}$ is the solution vector of (\ref{FD2}).

To connect the discrete solution $\{u^k_{i,j}\}$ of~(\ref{FD2}) and the ``continuous'' weak solution of~(\ref{epsTEROF2}), we first construct a
function $U_{N,M}(\cdot, t)$ in $W^{1,1}(\Omega)$ for each $t \in [0, T]$ in the form of a linear interpolation of $u^k$.

Let $\Delta_N$ be the following type of triangulation of $\Omega = [0,1]\times[0,1]$ with vertices $((i+1/2)h, (j+1/2)h), 0 \le i,j \le N-1$, $h=1/N$.
Suppose the base functions of the continuous linear finite element space $S^0_1(\Delta_N)$ are $\{\phi_{i,j}(x), (i,j) \in \mathbb{Z}^2\}$, where
 $\phi_{i,j}$ is a scaled and translated standard continuous linear box spline function $\phi(x)$ based on three directions $e_1=(1,0), e_2(0,1)$
and $e_3=(-1,1)$, i.e.  $\phi_{i,j}(x) := \phi(x/h - (i+1/2, j+1/2))$ for any $(i,j) \in \mathbb{Z}^2$.

\begin{center}
\begin{picture}(100,140)(0,-20)
\put(0,0){\line(1,0){100}}
\put(0,0){\line(0,1){100}}
\put(0,20){\line(1,0){100}}
\put(0,40){\line(1,0){100}}
\put(0,60){\line(1,0){100}}
\put(0,80){\line(1,0){100}}
\put(0,100){\line(1,0){100}}
\put(20,0){\line(0,1){100}}
\put(40,0){\line(0,1){100}}
\put(60,0){\line(0,1){100}}
\put(80,0){\line(0,1){100}}
\put(100,0){\line(0,1){100}}
\put(0,20){\line(1,-1){20}}
\put(0,40){\line(1,-1){40}}
\put(0,60){\line(1,-1){60}}
\put(0,80){\line(1,-1){80}}
\put(0,100){\line(1,-1){100}}
\put(20,100){\line(1,-1){80}}
\put(40,100){\line(1,-1){60}}
\put(60,100){\line(1,-1){40}}
\put(80,100){\line(1,-1){20}}
\put(-10,-10){Fig. 1. A triangulation}
\end{picture}
\end{center}

For any $k$, we define piecewise linear interpolation $U_{N,M}(x, t_k)$ of $u^k$ on $\Omega$ by
\begin{align}
\label{U_definition}
U_{N,M}(x, t_k) := \sum_{i,j=0}^{N-1}u^k_{i,j}\phi_{i,j}(x).
\end{align}
Having defined $U_{N,M}(\cdot,t_k)$ for $k = 0,\cdots,M$ on $\Omega$, we further define $U_{N,M}(\cdot, t)$ for $ t_{k-1} \leq t \leq t_k$
by linear interpolating $U_{N,M}(\cdot,t_{k-1})$ and $U_{N,M}(\cdot,t_k)$ on interval $[t_{k-1}, t_k]$.
$$
U_{N,M}(\cdot,t) = \frac{t-t_{k-1}}\Dt U_{N,M}(\cdot, t_k) + \frac{t_k-t}\Dt U_{N,M}(\cdot, t_{k-1}).
$$
By the definition of $u^h(t)$ given in~(\ref{u-t-def}), we can also write $U_{N,M}(\cdot, t)$ as
$$
U_{N,M}(\cdot, t) = \sum_{i,j=0}^{N-1} u^h(t)\phi_{i,j}
$$
We next prove a sequence of lemmas to explain the properties of $U_{N,M}(\cdot, t)$.

\begin{lemma}
\label{FDTbound}
Suppose $u_0\in W^{1,1}(\Omega), f\in L^2(\Omega)$. For any $t \in [0, T]$,
$\| \od U_{N,M}(\cdot, t)\|_{L^2(\Omega_T)} <C$ for a positive constant $C$ only depending on $u_0$ and $f$.
\end{lemma}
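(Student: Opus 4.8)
The plan is to estimate $\|\od U_{N,M}(\cdot,t)\|_{L^2(\Omega_T)}$ by relating the time derivative of the interpolant to the discrete forward differences $(u^k-u^{k-1})/\Dt$, and then to bound $\sum_k \Dt\,\|(u^k-u^{k-1})/\Dt\|^2$ by a telescoping energy argument. Since on each time slab $[t_{k-1},t_k]$ the interpolant $U_{N,M}(\cdot,t)$ is linear in $t$, we have $\od U_{N,M}(\cdot,t)=\frac1\Dt\bigl(U_{N,M}(\cdot,t_k)-U_{N,M}(\cdot,t_{k-1})\bigr)=\sum_{i,j}\frac{u^k_{i,j}-u^{k-1}_{i,j}}{\Dt}\phi_{i,j}$, which is constant in $t$ on that slab. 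Because the box-spline basis $\{\phi_{i,j}\}$ forms a stable basis, the $L^2(\Omega)$-norm of this finite-element function is comparable (up to absolute constants depending only on the box spline) to the discrete $\ell^2$-norm $\|(u^k-u^{k-1})/\Dt\|$ defined via the $h^2$-weighted sum. Hence
$$
\|\od U_{N,M}(\cdot,t)\|_{L^2(\Omega_T)}^2 \;\le\; C_0\sum_{k=1}^{M}\Dt\,\Bigl\|\frac{u^k-u^{k-1}}{\Dt}\Bigr\|^2
\;=\; \frac{C_0}{\Dt}\sum_{k=1}^{M}\|u^k-u^{k-1}\|^2.
$$

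The core of the argument is then to show $\sum_{k=1}^M\|u^k-u^{k-1}\|^2\le C\,\Dt$. For this I would use that $u^k$ is the minimizer of $E^h(v)=J^h(v)+\frac{1}{2\Dt}\|v-u^{k-1}\|^2$; comparing the value at $u^k$ with the value at the competitor $v=u^{k-1}$ gives
$$
J^h(u^k)+\frac{1}{2\Dt}\|u^k-u^{k-1}\|^2 \;\le\; J^h(u^{k-1}),
$$
so $\frac{1}{2\Dt}\|u^k-u^{k-1}\|^2\le J^h(u^{k-1})-J^h(u^k)$. Summing over $k=1,\dots,M$ telescopes to
$$
\frac{1}{2\Dt}\sum_{k=1}^{M}\|u^k-u^{k-1}\|^2 \;\le\; J^h(u^0)-J^h(u^M)\;\le\; J^h(u^0).
$$
Combining with the previous display yields $\|\od U_{N,M}(\cdot,t)\|_{L^2(\Omega_T)}^2\le 2C_0\,J^h(u^0)$, which is the claimed bound provided $J^h(u^0)$ is controlled uniformly in $N$.

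It remains to check that $J^h(u^0)$ is bounded by a constant depending only on $u_0$ and $f$. The quadratic part $\frac{1}{2\lambda}\|u^0-f^h\|^2$ is handled by the $L^2$-stability of the averaging projector $\Ph$: $\|u^0\|=\|\Ph u_0\|\le\|u_0\|_{L^2}$ and $\|f^h\|\le\|f\|_{L^2}$. For the two gradient terms $\frac12\sum_{i,j}\sqrt{\epsilon+|\nabla^{\pm}u^0_{i,j}|^2}\,h^2$, I would bound $\sqrt{\epsilon+|p|^2}\le\sqrt{\epsilon}+|p|$ and then show $\sum_{i,j}|\nabla^{+}u^0_{i,j}|\,h^2$ stays bounded — this is where the hypothesis $u_0\in W^{1,1}(\Omega)$ enters, since $h\sum_{i,j}|u^0_{i+1,j}-u^0_{i,j}|$ is a discrete total variation of the averaged data and is controlled by $\|\nabla u_0\|_{L^1(\Omega)}$ (the averaging in (\ref{newdeffh}) only decreases the total variation; alternatively Lemma~\ref{l2-smoothness} with $\alpha=1$ gives the needed translation bound after a Cauchy–Schwarz step, modulo the extra $h$-factors which work out in 2D). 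The main obstacle is precisely this last step: carefully tracking the $h$-powers so that the discrete $J^h(u^0)$ is bounded \emph{independently of $N$}, and making sure the stable-basis constant $C_0$ for the box spline is genuinely an absolute constant. Once those two bookkeeping points are settled, the telescoping estimate finishes the proof.
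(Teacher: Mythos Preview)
Your proposal is correct and follows essentially the same route as the paper: the paper multiplies the Euler--Lagrange equation by $u^{k-1}-u^k$ and uses the subdifferential inequality to get $\frac{1}{\Dt}\|u^k-u^{k-1}\|^2\le J^h(u^{k-1})-J^h(u^k)$ (your competitor argument with $v=u^{k-1}$ gives the same with an extra factor $\tfrac12$), telescopes, and then passes from the discrete norm to $\|\od U_{N,M}\|_{L^2(\Omega_T)}$ via the basis $\{\phi_{i,j}\}$ (the paper does this by Cauchy--Schwarz with the explicit constant $9$ rather than an abstract stable-basis constant $C_0$). The paper simply asserts that $J^h(u^0)$ is bounded independently of $h$ when $u_0\in W^{1,1}(\Omega)$, whereas you sketch this step; your sketch is fine and the bookkeeping worries you flag are routine.
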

\begin{proof}
Let us write the Euler-Lagrange equation~(\ref{euler-lagrange-for-one-step}) in a concise format:
\begin{align*}
\frac{u^{k-1} - u^k}{\Dt}h^2 &= \partial J^h(u^k).
\end{align*}
The equation above holds element-wise at each index $(i,j)$. For the equation at each index $(i,j)$, we
multiply both sides by $u^{k-1}_{i,j}-u^k_{i,j}$ and then add the equations for all $(i,j)$. In terms of
the standard inner product notation, we write the result in the following form:
\begin{align*}
\left\langle \frac{u^{k-1} - u^k}\Dt, u^{k-1} - u^k\right\rangle = \left\langle \partial J^h(u^k), u^{k-1} - u^k \right\rangle
\end{align*}
By the definition of sub-differential $\partial J^h(u^k)$
\begin{align*}
\left\langle \frac{u^{k-1} - u^k}\Dt, u^{k-1} - u^k\right\rangle
&= \left\langle \partial J^h(u^k), u^{k-1} - u^k \right\rangle
\leq J^h(u^{k-1})   - J^h(u^k).
\end{align*}
We have
\begin{align*}
\frac{1}\Dt\|u^{k-1} - u^k\|^2 \leq J^h(u^{k-1}) - J^h(u^k), \qquad 1 \leq k \leq M.
\end{align*}
Add the above inequalities for $k = 1,\cdots, M$,
\begin{equation}
\label{l2-bound-discrete-derivative}
\sum_{k=1}^M \frac{1}\Dt\|u^{k-1} - u^k\|^2 \leq J^h(u^0) - J^h(u^M).
\end{equation}
Note that
\begin{equation*}
    \frac{dU_{N,M}(\cdot, t)}{dt} = {\sum_{i,j}}\frac{u^k_{i,j} - u^{k-1}_{i,j}}\Dt {\phi_{i,j}}, \qquad t^{k-1}<t<t_k.
\end{equation*}
Then applying Cauchy-Schwarz inequality with $|\phi_{i,j}(x)|\le 1$, we have
\begin{align*}
    \left\|\frac{dU_{N,M}}{dt}\right\|^2_{L^2(\Omega_T)}  &= \sum_{k=1}^M \int_\Omega \left|\sum_{i,j}\frac{u^k_{i,j} - u^{k-1}_{i,j}}\Dt {\phi_{i,j}}
\right|^2\,dx \Delta t\\
    &\le 9\sum_{k=1}^M \left\|\frac{u^k - u^{k-1}}{\Delta t}\right\|^2 \Delta t \le 9(J^h(u^0) - J^h(u^M)).
\end{align*}
where $u^0= \Ph u_0$.  Here $9$ above can be replaced by 1 using Lemma 2.4 in \cite{LLM12}. 
Note that $J^h(u^0)$ is bounded by a positive constant independent of $h$
when $u_0\in W^{1,1}(\Omega)$.  This completes the proof.
\end{proof}

\begin{lemma}
\label{FDL2bound}
Suppose $u^0, f\in L^2(\Omega)$. Then $\|U_{N,M}\|_{L^2(\Omega_T)} \le C$ for a constant $C$ only dependent on $f$ and $u^0$. Furthermore,
$\|U_{N,M}(\cdot, t)\|_{L^2(\Omega)} \le C$ for a positive constant $C$ for any $t\in [0, T]$.
\end{lemma}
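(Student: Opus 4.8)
The plan is to dominate $\|U_{N,M}(\cdot,t)\|_{L^2(\Omega)}$ pointwise in $t$ by the discrete $L^2$ norm $\|u^h(t)\|$ of the coefficient array and then to invoke the uniform a priori estimate~(\ref{l2-bound}) already in hand, after which both assertions are immediate.

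First I would record the elementary norm equivalence $\|U_{N,M}(\cdot,t)\|_{L^2(\Omega)}\le C\,\|u^h(t)\|$ with $C$ an absolute constant, independent of $h,N,M$ and $t$ --- the same estimate already used (with the explicit constant $9$, improvable to $1$ by Lemma~2.4 of~\cite{LLM12}) in the proof of Lemma~\ref{FDTbound}. Concretely, since the box spline nodal functions satisfy $0\le\phi_{i,j}\le1$, $\sum_{i,j}\phi_{i,j}\le1$ on $\Omega$, and $\int_\Omega\phi_{i,j}\,dx\le h^2$ (a direct computation on the hexagonal support, or cf.~\cite{LLM12}), Cauchy--Schwarz gives
\begin{align*}
|U_{N,M}(x,t)|^2 &= \Bigl|\sum_{i,j}u^h(t)_{i,j}\,\phi_{i,j}(x)\Bigr|^2
\le \Bigl(\sum_{i,j}\phi_{i,j}(x)\Bigr)\Bigl(\sum_{i,j}(u^h(t)_{i,j})^2\,\phi_{i,j}(x)\Bigr)\\
&\le \sum_{i,j}(u^h(t)_{i,j})^2\,\phi_{i,j}(x),
\end{align*}
and integrating over $\Omega$ yields $\|U_{N,M}(\cdot,t)\|_{L^2(\Omega)}^2\le\sum_{i,j}(u^h(t)_{i,j})^2h^2=\|u^h(t)\|^2$.

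Next, by the definition~(\ref{u-t-def}) the array $u^h(t)$ is, for $t\in[t_{k-1},t_k]$, a convex combination of $u^{k-1}$ and $u^k$, so $\|u^h(t)\|\le\max\{\|u^{k-1}\|,\|u^k\|\}$. Feeding in the maximum-principle-type bound~(\ref{l2-bound}), $\|u^k\|\le\max\{\|u^0\|,\|f^h\|\}$, together with the projection estimates $\|f^h\|=\|\Ph f\|\le\|f\|_{L^2(\Omega)}$ and $\|u^0\|=\|\Ph u_0\|\le\|u_0\|_{L^2(\Omega)}$ --- each of which follows from Cauchy--Schwarz applied to the averaging formula~(\ref{newdeffh}) --- one gets $\|u^h(t)\|\le\max\{\|u_0\|_{L^2(\Omega)},\|f\|_{L^2(\Omega)}\}=:C_0$ uniformly in $t$. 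Hence $\|U_{N,M}(\cdot,t)\|_{L^2(\Omega)}\le C\,C_0$ for every $t\in[0,T]$, which is the second assertion, and then $\|U_{N,M}\|_{L^2(\Omega_T)}^2=\int_0^T\|U_{N,M}(\cdot,t)\|_{L^2(\Omega)}^2\,dt\le T\,C^2C_0^2$, which is the first. I do not expect a genuine obstacle here: the argument merely couples the already-established discrete estimate~(\ref{l2-bound}) with the dimensionless finite element norm equivalence; the one point deserving a line of care is that the constant in $\|U_{N,M}(\cdot,t)\|_{L^2(\Omega)}\le C\|u^h(t)\|$ be independent of $h$, which is exactly what the Cauchy--Schwarz computation above (or the cited lemma) provides.
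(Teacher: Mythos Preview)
Your proof is correct and follows essentially the same approach as the paper: both arguments combine the discrete a~priori bound~(\ref{l2-bound}) with the $h$-independent norm equivalence $\|U_{N,M}(\cdot,t)\|_{L^2(\Omega)}\le C\|u^h(t)\|$ (the paper cites Lemma~2.4 of~\cite{LLM12} for this, you supply the Cauchy--Schwarz computation). The only cosmetic difference is that the paper first bounds at the discrete times $t_k$ and then handles intermediate $t$ via the time-linear interpolation of $U_{N,M}$, whereas you go directly through the convex combination $u^h(t)$; your route is marginally cleaner and also makes explicit the projection estimates $\|f^h\|\le\|f\|_{L^2}$, $\|u^0\|\le\|u_0\|_{L^2}$ needed so that $C$ truly depends only on $f$ and $u_0$.
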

\begin{proof}
We use~(\ref{l2-bound}) to bound $\|U_{N,M}\|_{L^2(\Omega_T)}$ and
$\|U_{N, M}(\cdot, t)\|_{L^2(\Omega)}$. Recall $u^0_f=u^0$. It is
easy to see  for $t = t_k$,
\begin{align*}
\|U_{N,M}(\cdot, t_k)\|^2_{L^2(\Omega)} \le \|u^k_f\|^2 \le \max\{\|u^0_f\|, \|f^h\|\}^2.
\end{align*}
(cf. \cite{LW10} or Lemma 2.4 in \cite{LLM12} for the first
inequality and Remark 2.1 or (\ref{l2-bound}) for the second
inequality). Then we have
\begin{align*}
\|U_{N,M}\|^2_{L^2(\Omega_T)} &=
\int_0^T\left\|U_{N, M}(\cdot, t)\right\|^2_{L^2(\Omega)}\,dt\\
&= \sum_{k=1}^M\int_{t_{k-1}}^{t_k} \left\|\frac{(t-t_{k-1}) U_{N,M}(\cdot, t_k) + (t_k - t) U_{N,M}(\cdot, t_{k-1})}
{\Delta t}\right\|^2_{L^2(\Omega)}\,dt\\
&\leq \sum_{k=1}^M \int_{t_{k-1}}^{t_k}\|U_{N,M}(\cdot, t_k)\|^2_{L^2(\Omega)}+\|U_{N,M}(\cdot, t_{k-1})\|^2_{L^2(\Omega)}\,dt\\
&\leq \sum_{k=1}^M \int_{t_{k-1}}^{t_k}\|u^k\|^2+\|u^{k-1}\|^2\,dt \leq 2TC^2.
\end{align*}
As discussed above, for each $t\in [0, T]$, the integrand is $\|U_{N, M}(\cdot, t)\|^2_{L^2(\Omega)}$ which is less than or equal to $2C^2$ by
(\ref{l2-bound}). These complete the proof.
\end{proof}

The above two lemmas ensure that there exists a convergent subsequence from $\{U_{N, M}, N, M\to \infty\}$ and a function $U^*\in L^2(0,T,L^2(\Omega))$
such that $U_{N,M}$ and $\od U_{N,M}$ weakly converge to $U^*$ and $\od U^*$ in $L^2(\Omega_T)$.


Recall the definition of $u^h(t)$ in~(\ref{u-t-def}) with $u^k=(u^k_{ij}, 0\le i, j\le N-1)$.
That is, $u^h(\cdot, t)$ is a piecewise linear function in $t$ while piecewise constant function in $x$. However, $U_{N,M}$ is a piecewise linear
function in $x\in \Omega$ and piecewise linear function in $t$.
We now further show
\begin{lemma}
\label{useful}
Suppose $f, u_0\in \opLip(\alpha, L^2(\Omega))$.  Then
$$
\|U_{N,M}(\cdot, t)- u^h(\cdot, t)\|_{L^1([0,T];L^2(\Omega_T))} \le C T (\|u^0\|_{\opLip(\alpha, L^2)} + \|f\|_{\opLip(\alpha, L^2)})h^\alpha
$$
for a positive constant $C$ dependent only on $ f$ and $u_0$.
\end{lemma}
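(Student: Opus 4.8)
The plan is to control, for each fixed time $t\in[0,T]$, the spatial distance $\|U_{N,M}(\cdot,t)-u^h(\cdot,t)\|_{L^2(\Omega)}$ by $C(\|u^0\|_{\opLip(\alpha,L^2)}+\|f\|_{\opLip(\alpha,L^2)})h^\alpha$ uniformly in $t$, and then integrate in $t$ over $[0,T]$ to pick up the factor $T$ and land in $L^1([0,T];L^2(\Omega))$. The heart of the matter is that $U_{N,M}(\cdot,t)$ is a nodal interpolant of the \emph{same} array that defines the piecewise constant $u^h(\cdot,t)$, so their difference only sees the local oscillation of that array, which is exactly what Lemma~\ref{l2-smoothness} bounds.

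First I would use the structure of the box spline basis. Since $\phi=\phi(\cdot\,|\,e_1,e_2,e_3)$ is nonnegative, compactly supported (its support is the Minkowski sum of the three unit segments, a hexagon of $O(1)$ diameter in grid units), and $\{\phi_{i,j}\}$ forms a partition of unity, for any cell $\Omega_{m,n}$ and any $x\in\Omega_{m,n}$ only the $\phi_{i,j}$ with $(i,j)$ in a fixed finite neighborhood $\mathcal N(m,n)$ (of cardinality bounded independently of $N$) are nonzero at $x$, and those values sum to $1$. Near $\partial\Omega$ this uses the reflection convention $u^h_{-1,j}=u^h_{0,j}$, etc., so that any ``ghost'' coefficient that appears equals a genuine boundary value and the convex-combination property is preserved. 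Writing $u^h_{m,n}(t)=\sum_{i,j}u^h_{m,n}(t)\phi_{i,j}(x)$ and subtracting,
\[
|U_{N,M}(x,t)-u^h(x,t)|=\Bigl|\sum_{(i,j)\in\mathcal N(m,n)}\bigl(u^h_{i,j}(t)-u^h_{m,n}(t)\bigr)\phi_{i,j}(x)\Bigr|\le\max_{(i,j)\in\mathcal N(m,n)}|u^h_{i,j}(t)-u^h_{m,n}(t)|,\qquad x\in\Omega_{m,n}.
\]

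Next I would reduce an arbitrary neighbor difference to horizontal and vertical unit differences: joining $(m,n)$ to $(i,j)$ by a lattice path of bounded length inside a slightly larger neighborhood $\mathcal N'(m,n)$ and using the triangle and Cauchy--Schwarz inequalities gives
\[
|u^h_{i,j}(t)-u^h_{m,n}(t)|^2\le C\sum_{(p,q)\in\mathcal N'(m,n)}\Bigl(|(T_{1,0}u^h(t))_{p,q}-u^h_{p,q}(t)|^2+|(T_{0,1}u^h(t))_{p,q}-u^h_{p,q}(t)|^2\Bigr).
\]
Integrating $|U_{N,M}(x,t)-u^h(x,t)|^2$ over each $\Omega_{m,n}$ (area $h^2$), summing on $(m,n)$, and interchanging the order of summation (each $(p,q)$ lies in $\mathcal N'(m,n)$ for at most $C$ indices $(m,n)$) yields
\[
\|U_{N,M}(\cdot,t)-u^h(\cdot,t)\|_{L^2(\Omega)}^2\le C\bigl(\|T_{1,0}u^h(t)-u^h(t)\|^2+\|T_{0,1}u^h(t)-u^h(t)\|^2\bigr).
\]

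Finally, for $t_{k-1}\le t\le t_k$ one has $u^h(\cdot,t)=\frac{t-t_{k-1}}{\Dt}u^k+\frac{t_k-t}{\Dt}u^{k-1}$, and since the translations commute with this convex combination, $\|T_{1,0}u^h(t)-u^h(t)\|\le\max\{\|T_{1,0}u^k-u^k\|,\|T_{1,0}u^{k-1}-u^{k-1}\|\}$, and similarly for $T_{0,1}$; by Lemma~\ref{l2-smoothness} each of these is $\le(\|u^0\|_{\opLip(\alpha,L^2)}+\|f\|_{\opLip(\alpha,L^2)})h^\alpha$. Hence $\|U_{N,M}(\cdot,t)-u^h(\cdot,t)\|_{L^2(\Omega)}\le C(\|u^0\|_{\opLip(\alpha,L^2)}+\|f\|_{\opLip(\alpha,L^2)})h^\alpha$ for every $t\in[0,T]$, and integrating this bound over $[0,T]$ produces the factor $T$ and the stated estimate. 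The only delicate step is the first one—pinning down the support and the partition-of-unity of the box spline basis and checking that the boundary reflection does not spoil the convex-combination bound; after that the argument is bookkeeping with finitely many shifted copies of the estimate from Lemma~\ref{l2-smoothness}.
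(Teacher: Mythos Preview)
Your proposal is correct and follows essentially the same route as the paper: reduce the spatial $L^2$ difference on each cell to nearest-neighbor oscillations of the discrete array, rewrite these as $\|T_{1,0}u-u\|$ and $\|T_{0,1}u-u\|$, and invoke Lemma~\ref{l2-smoothness}. The only cosmetic difference is that the paper first restricts to the grid times $t_k$ (where $g=U_{N,M}-u^h$ is easiest to compute), then uses linearity of $g$ in $t$ to control $\int_0^T\|g(\cdot,t)\|_{L^2}\,dt$ by $\Dt\sum_k\|g(\cdot,t_k)\|_{L^2}$; you instead bound $\|g(\cdot,t)\|_{L^2}$ directly for every $t$ via the convex-combination structure of $u^h(t)$, which is an equally valid and slightly more streamlined variant of the same argument.
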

\begin{proof}
Let $g(x, t) = U_{N,M}(x, t)- u^h(x, t)$. For any $x$, $g(x, t)$ is a linear function of $t$. A direct calculation shows
\begin{equation*}
\int^{t_k}_{t_{k-1}}\|g(x, t)\|_{L^2(\Omega)}\,dt \le \frac{1}{2}\left(\|g(x, t_k)\|_{L^2(\Omega)}
+ \|g(x, t_{k-1})\|_{L^2(\Omega)}\right)(t_k - t_{k-1}).
\end{equation*}
Adding these inequalities for $k = 1, \cdots, M$, we have
\begin{align}
\label{basic-bound}
\int_0^T \|g(x, t)\|_{L^2(\Omega)}\,dt \leq \Delta t\sum_{k=0}^M\|g(x, t_k)\|_{L^2(\Omega)}.
\end{align}
Then we only need to bound $\|g(x, t_k)\|$. We note that $g(x, t)$ is a piecewise linear function of $x$ on each sub-grid
$\Omega_{i,j}:=[ih,(i+1)h]\times[jh, (j+1)h]$, $0 \le i,j \le N-1$ for any $t$. Tedious calculation gives
\begin{align*}
\|g(x, t_k)\|^2_{L^2(\Omega)} &= \sum_{i,j} \int_{\Omega_{i,j}}|U_{N,M}(x, t_k) - u^h(x, t_k)|^2 \\
&\leq \sum_{i,j} Ch^2 \left(\left|u^k_{i+1,j} - u^k_{i,j}\right|^2 + \left|u^k_{i,j+1} - u^k_{i,j}\right|^2+\left|u^k_{i-1,j}
- u^k_{i,j}\right|^2 + \left|u^k_{i,j-1} - u^k_{i,j}\right|^2\right)\\
&\leq C\left( \left\|T_{1,0}u^k - u^k\right\|^2 + \left\|T_{0,1}u^k - u^k\right\|^2\right)\\
&\leq 2C(\|f\|_{\opLip(\alpha,L^2)} + \|u_0\|_{\opLip(\alpha,L^2)})^2 h^{2\alpha}.
\end{align*}
The last line follows from Lemma~\ref{l2-smoothness}. We substitute the bound for the $\|g(x, t_k)\|_{L^2(\Omega)}$ in
inequality~(\ref{basic-bound}) to complete the proof.
\end{proof}

\begin{lemma}
\label{spline-approximation}
For all functions $v$ in $L^1([0,T], W^{1,1}(\Omega))$, there is a sequence of functions $\{v_N\}$ in $L^1([0,T], S^0_1(\Delta_N))$ so that
\begin{equation}
\label{spline-approximation1}
\lim_{N \to \infty} \|v - v_N\|_{L^1([0,T]; L^2(\Omega))} = 0.
\end{equation}
and
\begin{equation}
\label{spline-approximation2}
\lim_{N \to \infty}\left\|v - v_N\right\|_{L^1([0,T]; W^{1,1}(\Omega))} = 0
\end{equation}
\end{lemma}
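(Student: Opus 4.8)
The plan is to prove this by a two-stage approximation argument, first reducing to the spatial variable and then invoking a standard finite-element approximation result. Since the norm on the left of both \eqref{spline-approximation1} and \eqref{spline-approximation2} is an $L^1$-in-time norm of a spatial Sobolev (or $L^2$) norm, I would like to choose $v_N(\cdot,t)$ for almost every $t$ to be a fixed spatial approximation operator applied to $v(\cdot,t)$. The natural choice is the quasi-interpolation operator $\CIh$ (or $\Ph$-type operator) onto the box-spline finite element space $S^0_1(\Delta_N)$, $h=1/N$; such an operator is linear, bounded uniformly in $h$ on both $L^2(\Omega)$ and $W^{1,1}(\Omega)$, and reproduces constants, hence satisfies the pointwise-in-time estimates
\[
\|v(\cdot,t) - \CIh v(\cdot,t)\|_{L^2(\Omega)} \longrightarrow 0,\qquad
\|v(\cdot,t) - \CIh v(\cdot,t)\|_{W^{1,1}(\Omega)} \longrightarrow 0
\]
as $N\to\infty$, for every $t$ with $v(\cdot,t)\in W^{1,1}(\Omega)$ (which is a.e.\ $t$, since $v\in L^1([0,T],W^{1,1}(\Omega))$). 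The first step, then, is to set $v_N(\cdot,t):=\CIh v(\cdot,t)$ and record these two pointwise limits from classical finite-element/spline approximation theory (density of smooth functions in $W^{1,1}$, plus the approximation order of $\CIh$ on smooth functions, plus uniform boundedness to pass to the general case).

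The second step is to upgrade the pointwise-in-$t$ convergence to convergence of the $L^1([0,T];\cdot)$ norms by dominated convergence. For this I need an integrable majorant: using uniform boundedness of $\CIh$ on $W^{1,1}(\Omega)$, there is a constant $C$ independent of $N$ and $t$ with
\[
\|v(\cdot,t)-\CIh v(\cdot,t)\|_{W^{1,1}(\Omega)} \le (1+C)\,\|v(\cdot,t)\|_{W^{1,1}(\Omega)},
\]
and the right-hand side is in $L^1([0,T])$ by hypothesis; likewise for the $L^2$ statement, using $\|v(\cdot,t)\|_{W^{1,1}(\Omega)}$ (which controls $\|v(\cdot,t)\|_{L^2}$ in two dimensions via Sobolev embedding, or one argues directly with the $L^2$-boundedness of $\CIh$ and the majorant $\|v(\cdot,t)\|_{L^2(\Omega)}$, noting $W^{1,1}\hookrightarrow L^2$ in 2D so this is also integrable). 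Then the Lebesgue dominated convergence theorem gives
\[
\int_0^T \|v(\cdot,t)-v_N(\cdot,t)\|_{W^{1,1}(\Omega)}\,dt \longrightarrow 0,
\qquad
\int_0^T \|v(\cdot,t)-v_N(\cdot,t)\|_{L^2(\Omega)}\,dt \longrightarrow 0,
\]
which are exactly \eqref{spline-approximation2} and \eqref{spline-approximation1}. One should check that $v_N$ so defined is measurable as a map $[0,T]\to S^0_1(\Delta_N)$ and lies in $L^1([0,T],S^0_1(\Delta_N))$; this follows from linearity and boundedness of $\CIh$ together with measurability of $t\mapsto v(\cdot,t)$.

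The main obstacle is the first step: producing the spatial approximation operator with the three properties (uniform $L^2$- and $W^{1,1}$-boundedness, constant reproduction, and correct approximation order) on this particular triangulation $\Delta_N$ with box-spline basis $\phi_{i,j}$, and handling the boundary carefully. The interior estimate is standard Bramble--Hilbert/quasi-interpolation theory for box splines, but near $\partial\Omega$ the element $\phi_{i,j}$ are centered at $((i+1/2)h,(j+1/2)h)$ so the mesh does not align with $\partial\Omega$ in the usual way; one either extends $v(\cdot,t)$ to a slightly larger domain by a bounded extension operator $\Ext$ and then quasi-interpolates, or uses a modified near-boundary definition. I would first establish the clean estimate for $v(\cdot,t)\in C^1(\overline\Omega)$ (or $W^{2,1}$), then pass to general $W^{1,1}$ by density and the uniform bound. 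Everything after that — the dominated-convergence passage — is routine.
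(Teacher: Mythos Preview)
Your argument is correct and follows a genuinely different route from the paper. The paper does \emph{not} apply an averaging/quasi-interpolation operator directly to $v(\cdot,t)$; instead it first mollifies in space, setting $v_\epsilon=v*\psi_\epsilon$ (after extending $v$ across $\partial\Omega$ by reflection), and then applies the \emph{nodal} interpolant $\mathcal{I}^h$ to the smooth function $v_\epsilon$. The two errors are controlled separately: $\|v-v_\epsilon\|_{W^{1,1}}\to 0$ by standard mollifier theory, and $\|v_\epsilon-\mathcal{I}^h v_\epsilon\|_{W^{1,1}}\le C(h/\epsilon)\|v\|_{W^{1,1}}$ by the usual interpolation estimate together with $|v_\epsilon|_{W^{2,1}}\le C\epsilon^{-1}|v|_{W^{1,1}}$. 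Coupling $\epsilon=h^{1-\alpha}$ gives a rate $O(h^\alpha)$, and Dominated Convergence finishes exactly as you propose; the $L^2$ statement is then deduced from the $W^{1,1}$ one via the 2D Sobolev embedding, as you also suggest.

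Your quasi-interpolation approach is cleaner for the lemma as stated: it avoids the mollification and the $h$--$\epsilon$ coupling, and the uniform $W^{1,1}$-stability of a Cl\'ement/Scott--Zhang-type operator gives the DCT majorant in one stroke. The trade-off is that the paper's construction is not only a proof device for this lemma: the explicit form $v_N=\mathcal{I}^h v_\epsilon$ with $\epsilon=h^{1-\alpha}$ and the resulting $O(h^\alpha)$ bounds are reused verbatim in Lemma~\ref{J-difference} and Proposition~\ref{proposition1}, where the discrete functional $J^h$ is evaluated at the pointwise array $v^h_{\epsilon,i,j}=v_\epsilon((i+\tfrac12)h,(j+\tfrac12)h)$. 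If you adopt your operator instead, you would need to redo those downstream estimates with your quasi-interpolation coefficients in place of the nodal values of $v_\epsilon$.
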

\begin{proof}
For any $0 \le t \le T$, define the interpolant $\mathcal{I}^h v$ for $v(\cdot,t)$ in $C(\Omega)$ by
\begin{align*}
\mathcal{I}^hv(x, t) = \sum_{i,j} v( (i+1/2)h, (j+1/2)h,t )\phi_{i,j}(x).
\end{align*}
And for any $t \in [0, T]$, define
\begin{align}
    \label{v_N-def}
    v_N(x, t) = \mathcal{I}^hv_\epsilon(x, t)
\end{align}
where $v_\epsilon$ is the smoothed $v$  by a symmetric smooth cut-off function $\psi_\epsilon$ satisfying
(i) $\mbox{supp} \psi_\epsilon \subset B(0, \epsilon)$ and (ii) $\int_{\mathbb{R}^2} \psi_\epsilon \,dx = 1$. More precisely,
$$
v_\epsilon = \int_{\mathbb{R}^2} v(x-y)\psi_\epsilon(y)\,dy.
$$
Since we need to use the value of $v$ outside $\Omega$ in the above integration,
we extend $v$ to all of $\mathbb R^2$ by reflecting and translating; Define
\begin{equation*}
v(x_1,x_2, t)=v(2-x_1, x_2, t),\qquad \mbox{for }1\leq x_1\leq 2,\ 0\leq x_2\leq 1,
\end{equation*}
and
\begin{equation*}
v(x_1,x_2, t)= v(x_1, 2-x_2, t), \qquad \mbox{for }0\leq x_1\leq 2,\ 1\leq x_2\leq 2.
\end{equation*}
Having extended $v$ on $2\Omega$, we then extend $v$ periodically on all of $\mathbb R^2$.

It is a classical result(cf. \cite{Ziemer89}) that for $0 \le t \le
 T$,
\begin{equation}
    \label{v_epsilon-w11-bound}
|v_\epsilon(\cdot, t)|_{W^{1,1}(\Omega)} \le |v(\cdot, t)|_{W^{1,1}(\Omega)},
\end{equation}
and
\begin{align}
    \label{ineq-part1}
\lim_{\epsilon \to 0} \|v_\epsilon(\cdot, t) - v(\cdot, t)\|_{W^{1,1}(\Omega)} = 0.
\end{align}

We also know $\mathcal{I}^h$ is a bounded operator from
$C^2(\overline{\Omega})$ to $W^{1,1}(\Omega)$, and(cf. \cite{BS94}
or  \cite{LW10})
\begin{align}
    \label{int-boundness}
    |v_\epsilon(\cdot, t) - \CIh v_\epsilon(\cdot, t)|_{W^{1,1}(\Omega)} \le Ch|v_\epsilon(\cdot, t)|_{W^{2,1}(\Omega)}\le C\frac{h}\epsilon |v(\cdot, t)|_{W^{1,1}(\Omega)}
\end{align}
\begin{align}
    \label{l1-boundness}
    \|v_\epsilon(\cdot, t) - \mathcal{I}^hv_\epsilon(\cdot, t)\|_{L^1(\Omega)} \le Ch|v_\epsilon(\cdot, t) - v(\cdot, t)|_{W^{1,1}(\Omega)} \le 2Ch|v(\cdot, t)|_{W^{1,1}(\Omega)}.
\end{align}
Setting $\epsilon = h^{1-\alpha}$, we have
\begin{equation}
    \label{v_epsilon-int-v-bound}
    \|v_\epsilon(\cdot, t) - \mathcal{I}^h v_\epsilon(\cdot, t)\|_{W^{1,1}(\Omega)} \le Ch^{\alpha}|v(\cdot, t)|_{W^{1,1}(\Omega)},
\end{equation}
and
\begin{equation}
    \label{ineq-part2}
\lim_{h \to 0}\left\|v_\epsilon(\cdot, t) - \mathcal{I}^hv_\epsilon(\cdot, t)\right\|_{W^{1,1}(\Omega)} = 0.
\end{equation}
Finally inequality~(\ref{spline-approximation2}) follows
from~(\ref{ineq-part1}),~(\ref{ineq-part2}) and Legesuge's Dominated
Convergence Theorem. Inequality~(\ref{spline-approximation1})
follows from Sobolev embedding theorem(cf. \cite{Ziemer89}, Remark
2.5.2)
\begin{equation}
\label{problematic}
\left\|v(\cdot, t) - \mathcal{I}^hv_\epsilon(\cdot, t)\right\|_{L^2(\Omega)} \le C \left\|v(\cdot, t) -
\mathcal{I}^hv_\epsilon(\cdot, t)\right\|_{W^{1,1}(\Omega)}
\end{equation}
and equation~(\ref{spline-approximation2}).
\end{proof}

We now bound the difference between the two projecting operators: $\mathcal{I}^h v_\epsilon$ and $\Ph v_\epsilon$
\begin{lemma}
\label{approximation}
    For any $v \in W^{1,1}(\Omega)$,
    \begin{equation}
        \|\CIh v_\epsilon - \Ph v_\epsilon\| \le Ch|v|_{W^{1,1}(\Omega)}.
    \end{equation}
\end{lemma}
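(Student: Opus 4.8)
The plan is to pass through the mollified function $v_\epsilon$ itself, to use that the cell-average projection $\Ph$ reproduces constants exactly while the box-spline interpolant $\CIh$ reproduces them up to the usual boundary layer, and then to invoke the classical finite element interpolation estimates on the quasi-uniform triangulation $\Delta_N$. First I would write
\[
\|\CIh v_\epsilon - \Ph v_\epsilon\| \le \|\CIh v_\epsilon - v_\epsilon\| + \|v_\epsilon - \Ph v_\epsilon\|
\]
and treat the two terms separately. For the second term, $\Ph v_\epsilon$ equals the constant $\frac{1}{h^2}\int_{\Omega_{i,j}}v_\epsilon$ on each cell $\Omega_{i,j}$, so the scale-invariant Poincar\'e inequality gives $\|v_\epsilon - \Ph v_\epsilon\|_{L^2(\Omega_{i,j})}\le Ch\|\nabla v_\epsilon\|_{L^2(\Omega_{i,j})}$; squaring and summing over $0\le i,j\le N-1$ yields $\|v_\epsilon - \Ph v_\epsilon\|\le Ch\|\nabla v_\epsilon\|_{L^2(\Omega)}$. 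For the first term, since $v_\epsilon$ is smooth, the standard $L^2$ interpolation error estimate for the continuous piecewise linear interpolant (cf. \cite{BS94} or \cite{LW10}, already used in the proof of Lemma~\ref{spline-approximation}) gives $\|v_\epsilon - \CIh v_\epsilon\|\le Ch\|\nabla v_\epsilon\|_{L^2(\Omega)}$. Hence $\|\CIh v_\epsilon - \Ph v_\epsilon\|\le Ch\|\nabla v_\epsilon\|_{L^2(\Omega)}$.

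It then remains to replace $\|\nabla v_\epsilon\|_{L^2(\Omega)}$ by $|v|_{W^{1,1}(\Omega)}$. Writing $\nabla v_\epsilon = \psi_\epsilon\ast\nabla v$ with $v$ extended to $\mathbb{R}^2$ as in Lemma~\ref{spline-approximation}, and applying Young's convolution inequality, one gets $\|\nabla v_\epsilon\|_{L^2(\Omega)}\le \|\psi_\epsilon\|_{L^2(\mathbb{R}^2)}\,\|\nabla v\|_{L^1(2\Omega)}\le C\,|v|_{W^{1,1}(\Omega)}$ (the reflected extension only multiplies the seminorm by a fixed factor), so $\|\CIh v_\epsilon - \Ph v_\epsilon\|\le Ch\,|v|_{W^{1,1}(\Omega)}$, as claimed. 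The same conclusion can be phrased purely in the cell-localized, constant-reproduction language: on each patch $\omega_{i,j}$ of cells meeting $\Omega_{i,j}$ one subtracts the patch mean $\bar v_{i,j}$ from $v_\epsilon$, bounds $\CIh(v_\epsilon-\bar v_{i,j})$ and $\Ph(v_\epsilon-\bar v_{i,j})$ by $\|v_\epsilon-\bar v_{i,j}\|_{L^2(\omega_{i,j})}$ via local stability of the two operators, and then applies Poincar\'e together with the bounded overlap of the $\omega_{i,j}$; this is exactly the argument behind the interpolation estimates quoted above.

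The step I expect to be the main obstacle is the behaviour of $\CIh$ near $\partial\Omega$. The functions $\{\phi_{i,j}\}_{0\le i,j\le N-1}$ form a partition of unity only in the interior, since the ghost nodes are omitted, so $\CIh$ does not reproduce constants on the boundary strip of width $O(h)$ and a naive estimate there only yields $O(h^{1/2})$. To recover the full rate one must either invoke the reflection/ghost-value convention already in force for $u^h$ (extending $v_\epsilon$ by its boundary values so that reproduction of constants is restored on all of $\Omega$), or estimate the boundary layer by hand using the smoothness of $v_\epsilon$ up to the extended domain. A secondary point to keep track of is that the constant $C$ necessarily contains the factor $\|\psi_\epsilon\|_{L^2}$ and hence depends on the mollification scale; this is harmless in the application, where $\epsilon$ is tied to $h$, but it is worth stating explicitly.
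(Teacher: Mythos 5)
Your opening move is the same as the paper's: the triangle inequality through $v_\epsilon$, splitting into $\|\CIh v_\epsilon - v_\epsilon\|$ and $\|v_\epsilon - \Ph v_\epsilon\|$. But the way you estimate the pieces diverges from the paper, and the divergence creates a genuine gap. You bound $\|v_\epsilon - \Ph v_\epsilon\|$ and $\|v_\epsilon - \CIh v_\epsilon\|$ by $Ch\|\nabla v_\epsilon\|_{L^2(\Omega)}$ and then try to convert to $|v|_{W^{1,1}}$ via Young's inequality, $\|\nabla v_\epsilon\|_{L^2}\le \|\psi_\epsilon\|_{L^2}\|\nabla v\|_{L^1}$. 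In two dimensions $\|\psi_\epsilon\|_{L^2}\sim \epsilon^{-1}$, and this loss is sharp for general $v\in W^{1,1}$ (whose gradient need not be in $L^2$ at all). So your argument proves $\|\CIh v_\epsilon - \Ph v_\epsilon\|\le C(h/\epsilon)|v|_{W^{1,1}}$, which with $\epsilon=h^{1-\alpha}$ is $Ch^\alpha|v|_{W^{1,1}}$ — strictly weaker than the stated rate $Ch|v|_{W^{1,1}}$. You noticed the $\|\psi_\epsilon\|_{L^2}$ dependence yourself but dismissed it as harmless; it is not harmless for the lemma as stated, since it changes the claimed power of $h$ (it does happen to suffice for the $O(h^\alpha)$ bound actually used in Proposition~\ref{proposition1}, which is presumably why the issue is easy to underrate).

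The ingredient you are missing is the one the paper leans on: an $L^1$-based (critical Sobolev) Poincar\'e--Wirtinger inequality,
\begin{equation*}
\|v_\epsilon - \Ph v_\epsilon\|_{L^2(\Omega)} \le Ch\,|v_\epsilon|_{\BV(\Omega)},
\end{equation*}
quoted from \cite{av}, combined with the mollification bound $|v_\epsilon|_{\BV(\Omega)}\le |v|_{W^{1,1}(\Omega)}$ from~(\ref{v_epsilon-w11-bound}); for the interpolation term the paper invokes its estimate~(\ref{l1-boundness}), which is again phrased in $W^{1,1}$-seminorms of $v$. The point is that every seminorm of $v_\epsilon$ appearing in the paper's chain is controlled by $|v|_{W^{1,1}}$ \emph{uniformly in} $\epsilon$, whereas the moment you pass to an $L^2$ norm of $\nabla v_\epsilon$ you pay $\epsilon^{-1}$ and cannot recover the full $O(h)$ rate. (Your concern about constant reproduction of $\CIh$ near $\partial\Omega$ is legitimate but secondary — the paper handles the interpolation term by citation rather than by a boundary-layer analysis — and your proposed estimate $\|v_\epsilon-\CIh v_\epsilon\|_{L^2}\le Ch|v_\epsilon|_{H^1}$ for the nodal interpolant is itself the borderline Sobolev case in 2D and not a standard off-the-shelf estimate.)
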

\begin{proof}
    \begin{equation*}
        \|\CIh v_\epsilon - \Ph v_\epsilon\| \le \|\CIh v_\epsilon - v_\epsilon\| + \|v_\epsilon - \Ph v_\epsilon\|.
    \end{equation*}
Now the result follows from~(\ref{l1-boundness}) and
Poincar\'{e}-Wirtinger inequality(cf.~\cite{av})
\begin{equation*}
    \|v_\epsilon - \Ph v_\epsilon\|_{L^2(\Omega)} \le Ch|v_\epsilon|_{\BV(\Omega)}.
\end{equation*}
\end{proof}

We have introduced two notations of total variation, one for functions in $\BV(\Omega)$ and the other one for discrete functions.
We need to show these two versions of total variation are consistent.
We use the following lemma to bound the difference between the continuous variation $J(U_{N,M}(\cdot, t))$
and the discrete variations $J(u^k)$. We bound the difference between  $J(v_N(\cdot, t))$ and $J(v^h_\epsilon)$ similarly.

\begin{lemma}
\label{J-difference}
Let $\{v_N\}$ be the sequence of functions  defined as in Lemma~\ref{spline-approximation}. Then for any $t \in [0, T]$
\begin{align}
\label{J-difference1}
|J(v_N(\cdot, t)) - J^h(v^h_\epsilon(t))| \le Ch^\alpha,
\end{align}
where $C$ depends on $v$ and $f$. Moreover, for $U_{N,M}(\cdot, t)$ defined in~(\ref{U_definition}) we have
\begin{align}
\label{J-difference2}
|J(U_{N,M}(\cdot, t)) - J^h(u^h(t))| \le Ch^{\alpha},
\end{align}
where $C$ depends on $f$.
\end{lemma}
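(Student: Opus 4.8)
The plan is to prove both estimates~(\ref{J-difference1}) and~(\ref{J-difference2}) by the same mechanism, since $J$ splits into the nonlinear ``area'' term and the quadratic fidelity term, and each piece can be compared to its discrete counterpart via the smoothness estimates already established. First I would record the key observation behind the finite difference scheme: for a continuous piecewise linear function $w = \sum_{i,j} w_{i,j}\phi_{i,j}$ on the triangulation $\Delta_N$, the gradient $\nabla w$ is constant on each triangle, and on the two triangles meeting at a vertex these constant gradients are exactly $\nabla^+ w_{i,j}$ and $\nabla^- w_{i,j}$ (up to the box-spline geometry). Hence
\begin{align*}
\int_\Omega \sqrt{\epsilon + |\nabla w|^2}\,dx = \frac{1}{2}\sum_{i,j}\sqrt{\epsilon + |\nabla^+ w_{i,j}|^2}\,h^2 + \frac{1}{2}\sum_{i,j}\sqrt{\epsilon + |\nabla^- w_{i,j}|^2}\,h^2,
\end{align*}
modulo boundary-triangle corrections of size $O(h)$ times the number of boundary triangles, i.e. $O(h)$ overall (this is the ``advantage of our scheme'' mentioned in the introduction). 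Applying this with $w = v_N(\cdot,t) = \CIh v_\epsilon(\cdot,t)$, whose nodal values are $v_\epsilon((i+1/2)h,(j+1/2)h,t)$, and with $w = U_{N,M}(\cdot,t)$, whose nodal values are $u^h_{i,j}(t)$, matches the area term of $J$ against the area term of $J^h$ evaluated at $\Ph v_\epsilon$, respectively $u^h(t)$ — provided the nodal values of $\CIh v_\epsilon$ agree with the cell-averages $\Ph v_\epsilon$; that discrepancy is controlled by Lemma~\ref{approximation}, and the Lipschitz continuity $|\sqrt{\epsilon+|a|^2}-\sqrt{\epsilon+|b|^2}|\le|a-b|$ of the integrand together with the discrete smoothness bound of Lemma~\ref{l2-smoothness} turns an $O(h)$ nodal error into an $O(h^\alpha)$ error in the area functional.

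Next I would treat the fidelity term. We must compare $\frac{1}{2\lambda}\int_\Omega |f - v_N(\cdot,t)|^2\,dx$ with $\frac{1}{2\lambda}\|\Ph v_\epsilon(t) - f^h\|^2$, and likewise $\frac{1}{2\lambda}\int_\Omega|f - U_{N,M}(\cdot,t)|^2$ with $\frac{1}{2\lambda}\|u^h(t) - f^h\|^2$. Here I would write $f - v_N = (f - \Ph f) + (\Ph f - \Ph v_\epsilon) + (\Ph v_\epsilon - \CIh v_\epsilon) + (\CIh v_\epsilon - v_N)$; the last term vanishes by definition, the third is $O(h)$ by Lemma~\ref{approximation}, and $\|f - \Ph f\|$ is $O(h^\alpha)$ since $f\in\opLip(\alpha,L^2)$ (Poincar\'e--Wirtinger on each cell, exactly as invoked in Lemma~\ref{approximation}). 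The $L^2$ boundedness of $U_{N,M}(\cdot,t)$ and of $v_N$ (Lemma~\ref{FDL2bound}, respectively~(\ref{v_epsilon-w11-bound})) lets me convert the difference of squares $\|a\|^2 - \|b\|^2 = \langle a-b, a+b\rangle$ into $\|a - b\|$ times a uniformly bounded factor, so each fidelity discrepancy is again $O(h^\alpha)$. For~(\ref{J-difference2}) the analogous decomposition of $f - U_{N,M}(\cdot,t_k)$ is simpler because the piecewise-linear $U_{N,M}$ and the piecewise-constant $u^h$ share the same nodal/cell values $u^k_{i,j}$; the gap between them is exactly $g$ from Lemma~\ref{useful}, controlled pointwise in $t$ by $O(h^\alpha)$ via Lemma~\ref{l2-smoothness}.

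Assembling: $|J(v_N(\cdot,t)) - J^h(v^h_\epsilon(t))|$ is bounded by the boundary-triangle correction $O(h)$, plus the area-term error $O(h^\alpha)$, plus the fidelity-term error $O(h^\alpha)$, giving~(\ref{J-difference1}) with a constant depending on $|v(\cdot,t)|_{W^{1,1}(\Omega)}$ (hence on $v$) and $\|f\|_{\opLip(\alpha,L^2)}$; for the time-dependent statement one takes the supremum over $t$, using that $|v(\cdot,t)|_{W^{1,1}}$ enters only through the bounds already uniform in $t$. The same bookkeeping with $U_{N,M}(\cdot,t)$ in place of $v_N(\cdot,t)$ and $u^h(t)$ in place of $v^h_\epsilon(t)$ gives~(\ref{J-difference2}), where now the smoothness input is Lemma~\ref{l2-smoothness} applied to $u^k$, whose constant depends only on $\|u^0\|_{\opLip(\alpha,L^2)}$ and $\|f\|_{\opLip(\alpha,L^2)}$, and since $u^0 = \Ph u_0$ and $f$ are fixed data this reduces to a dependence on $f$ (and the initial data) as stated.

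The main obstacle I anticipate is the bookkeeping of the boundary triangles: the identity relating $\int_\Omega\sqrt{\epsilon+|\nabla w|^2}$ to the averaged discrete area functional is exact only in the interior, and near $\partial\Omega$ the one-sided differences $\nabla^+$ and $\nabla^-$ are modified by the reflected ghost values ($f^h_{-1,j}=f^h_{0,j}$, etc.) and the triangulation has half-cells, so one must check that all these corrections sum to $O(h)$ rather than $O(1)$ — this uses that there are $O(N) = O(1/h)$ boundary cells each contributing $O(h^2)$, together with the uniform gradient bounds. A secondary subtlety is making sure the choice $\epsilon = h^{1-\alpha}$ in $v_\epsilon$ (inherited from Lemma~\ref{spline-approximation}) is consistent here: the mollification error $\|v_\epsilon - v\|$ and the interpolation error must both be reabsorbed into $O(h^\alpha)|v|_{W^{1,1}}$, which is precisely what~(\ref{v_epsilon-int-v-bound}) and the Lipschitz-in-$L^2$ hypothesis on $v$ (or $f$) deliver.
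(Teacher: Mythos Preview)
Your approach is essentially the same as the paper's: split $J-J^h$ into the area term and the fidelity term, exploit the identity between the continuous area integral of a piecewise linear function on $\Delta_N$ and the averaged discrete area functional, and control the fidelity discrepancy via the difference-of-squares factorization $\|a\|^2-\|b\|^2=(\|a\|-\|b\|)(\|a\|+\|b\|)$ together with the $\opLip(\alpha,L^2)$ bound on $f$ and the interpolation/mollification bounds for $v_\epsilon$ with $\epsilon=h^{1-\alpha}$.

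The one place you work harder than the paper is the meaning of $v^h_\epsilon$. In the paper $v^h_\epsilon$ is defined (inside the proof, not in the statement) not as the cell-average projection $\Ph v_\epsilon$ but as the piecewise constant function taking the \emph{point values} $v_\epsilon((i+\tfrac12)h,(j+\tfrac12)h,t)$ on each cell---i.e., exactly the nodal array of $\CIh v_\epsilon$. With this choice the area term of $J(v_N)$ and the area term of $J^h(v^h_\epsilon)$ coincide \emph{exactly}, so the whole of $J(v_N)-J^h(v^h_\epsilon)$ reduces to the fidelity terms; there is no ``nodal values versus cell averages'' discrepancy in the area part to control via Lemma~\ref{approximation} and the Lipschitz bound on $\sqrt{\epsilon+|\cdot|^2}$. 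Your route through $\Ph v_\epsilon$ is not wrong, just longer, and your extra area-term error would indeed be absorbed into $O(h^\alpha)$. The paper also does not address your boundary-triangle concern at all; it simply asserts the area-term identity as exact, so your caution there is more careful than the original.
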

\begin{proof}
Note that for any function $v_N(\cdot, t)$ in $S^0_1(\Delta_N)$, the variation term in $J(v_N(\cdot, t))$ is exactly equal to the variation term in
$J^h(v^h_\epsilon(t))$. This is why we design our finite difference schemes in (\ref{FD}) and (\ref{FD2}) instead of the standard forward difference or
backward difference scheme. 
We only need to bound the difference between the second terms in $J(v_N)$ and $J^h(v^h_\epsilon)$.

Let $v^h_{\epsilon, i, j}(t)$ be the value of $v_\epsilon(\cdot, t)$ at point $ ((i+1/2)h, (j+1/2)h)$.
Define discrete function $v^h_\epsilon(t)$ by
\begin{equation}
    \label{Q-operator}
    v^h_\epsilon(x, t) := \sum_{i,j} v^h_{\epsilon,i,j}(t)\chi_{i,j}(x),
\end{equation}
and recall  $f^h$ is the piecewise constant projection of $f$, i.e. $f^h = \Ph f$.
\begin{align*}
    &\phantom{ {}={} } |J(v_N(\cdot,t)) - J^h(v^h_\epsilon(t))|
    = \left|\frac{1}{2\lambda}\| v^h_\epsilon(\cdot,t) - f^h\|^2 - \frac{1}{2\lambda}\|\mathcal{I}^h v_\epsilon(\cdot,t) - f\|^2\right|\\
    & = \frac{1}{2\lambda}\bigg|(\| v^h_\epsilon(\cdot,t) -  f^h\| - \|\mathcal{I}^h v_\epsilon(\cdot,t) - f\|)(\| v^h_
\epsilon(\cdot,t)-f^h\|+\|\mathcal{I}^h v_\epsilon(\cdot,t) - f\|)\bigg|\\
    &\le \frac{1}{2\lambda}\left(\| v^h_\epsilon(\cdot,t) - \mathcal{I}^h v_\epsilon(\cdot,t)\| + \| f^h - f\|\right) C(\|v_\epsilon(\cdot,t)\|+\|f\|)
\end{align*}
By standard approximation theory(cf.~\cite{Ziemer89}) and Sobolev
inequality
\begin{equation*}
    \|v^h_\epsilon - \mathcal{I}^h v_\epsilon\| \le Ch\|D v_\epsilon\| \le Ch(|v_\epsilon|_{W^{1,1}} + |v_\epsilon|_{W^{2,1}}) \le  C\frac{h}\epsilon
|v|_{W^{1,1}},
\end{equation*}
and
\begin{equation*}
    \|f^h - f\| \le C |f|_{\opLip(\alpha, L^2)}h^\alpha.
\end{equation*}
Then we proved inequality~(\ref{J-difference1}) by setting $\epsilon
= h^{1-\alpha}$. We can prove~(\ref{J-difference2}) along the same
line of arguments(noting $\|u^h\| \le 2\|f\|$ and applying
Lemma~\ref{l2-smoothness}. We omit the details.
\end{proof}

The following proposition is another one of the key ingredients to prove our main results in Theorem~\ref{mainFDconv}.
\begin{proposition}
\label{proposition1}
For any test functions $v$ in $L^1([0,T], W^{1,1}(\Omega))$, let $\{ v_N \}$ be a sequence defined in Lemma~\ref{spline-approximation}. t
Then for $0<s<T$
\begin{equation}
\label{keyineq21}
\int_0^s \left[\int_\Omega \od U_{N,M} (v_N- U_{N,M})dx + (J(v_N)- J(U_{N,M}))\right] dt
\ge  -\hbox{Err}_{N,M}
\end{equation}
where $\hbox{Err}_{N,M}$ depends on $v$ and tends to zero as $N,M \to \infty$ in the following fashion
\begin{equation}
\label{laimj}
\frac{h^\alpha}{\Delta t}  = \frac{M}{T N^\alpha} \to 0.
\end{equation}
\end{proposition}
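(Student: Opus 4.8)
The plan is to start from the discrete characterization in Lemma~\ref{LTidea}, which already gives an inequality of exactly the shape we want, but stated for the discrete solution $u^k$ and discrete test arrays, and to transfer it term by term to the continuous quantities built from $U_{N,M}$ and $v_N$. Concretely, I would use the array $v_N(\cdot, t_k)$ (the nodal values of the spline $v_N$) as the test array $v_{i,j}$ in \eqref{FDineq} at each time level $k$. This produces, at each $k$, the inequality
\begin{align*}
\sum_{i,j}\frac{u^k_{i,j}-u^{k-1}_{i,j}}{\Dt}\bigl((v_N)^h_{i,j}(t_k)-u^k_{i,j}\bigr)h^2 + \bigl(J^h((v_N)^h(t_k)) - J^h(u^k)\bigr)\ge 0,
\end{align*}
where I have multiplied through by $h^2$ and recognised the two variation sums plus the fidelity difference as exactly $J^h((v_N)^h(t_k))-J^h(u^k)$. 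Multiplying by $\Dt$ and summing over $k=1,\dots,K$ with $t_K$ the grid point nearest to $s$, the first sum becomes a Riemann sum for $\int_0^s\!\int_\Omega \od U_{N,M}\,(v_N-U_{N,M})\,dx\,dt$ and the second becomes $\Dt\sum_k (J^h((v_N)^h(t_k))-J^h(u^k))$, which via Lemma~\ref{variation-monotonicity-lemma} and Lemma~\ref{J-difference} (both \eqref{J-difference1} and \eqref{J-difference2}) approximates $\int_0^s (J(v_N)-J(U_{N,M}))\,dt$.

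The bulk of the work is bookkeeping the error terms, and this is where I would be careful. There are essentially four sources of error. First, replacing the discrete inner product $\langle \frac{u^k-u^{k-1}}{\Dt}, v_N(t_k)-u^k\rangle$ (a piecewise-constant-in-$x$ pairing) by $\int_\Omega \od U_{N,M}\,(v_N - U_{N,M})\,dx$ over $[t_{k-1},t_k]$: here $\od U_{N,M}$ on $(t_{k-1},t_k)$ equals $\sum_{i,j}\frac{u^k_{i,j}-u^{k-1}_{i,j}}{\Dt}\phi_{i,j}$, so one compares the spline representation against the piecewise-constant one; Lemma~\ref{useful} controls $\|U_{N,M}(\cdot,t)-u^h(\cdot,t)\|$ by $Ch^\alpha$ and $\|v_N - (v_N)^h\|$ by a similar $O(h)$ bound, while $\|\od U_{N,M}\|_{L^2(\Omega_T)}$ is bounded by Lemma~\ref{FDTbound}. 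Applying Cauchy--Schwarz, this contributes an error of order $h^\alpha$ times the $L^2$-bound on $\od U_{N,M}$, hence $O(h^\alpha)$. Second, the mismatch between $J(v_N(\cdot,t_k))$ and $J^h((v_N)^h(t_k))$, and between $J(U_{N,M}(\cdot,t))$ and $J^h(u^h(t))$, is exactly $O(h^\alpha)$ by Lemma~\ref{J-difference}. Third, replacing $J^h(u^h(t))$ by $J^h(u^k)$ on each subinterval is handled by the monotonicity Lemma~\ref{variation-monotonicity-lemma}, which says $J^h(u^k)\le J^h(u^h(t))$, so this contributes an error with a sign that is harmless (it only helps the inequality) — alternatively one bounds it crudely. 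Fourth, the time-discretisation error in replacing $\int_0^s$ by $\Dt\sum_{k=1}^K$ up to the grid point $t_K$ near $s$: the tail $\int_{t_K}^s$ is a single subinterval, and using the $L^2$-bounds from Lemmas~\ref{FDTbound}--\ref{FDL2bound} it is $O(\Dt)$.

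Summing, $\mathrm{Err}_{N,M}$ will be a sum of terms of the form $C(v)\,h^\alpha$ and $C(v)\,\Dt$, and the potentially dangerous term will be one where a factor $h^\alpha$ is divided by $\Dt$ — this arises because the discrete-derivative term is scaled by $1/\Dt$ and gets multiplied by a spatial error of size $h^\alpha$ per time step, then summed over $M = T/\Dt$ steps, producing $M\cdot h^\alpha = (T/\Dt)h^\alpha = h^\alpha/\Dt$. This is precisely the quantity $\tfrac{h^\alpha}{\Dt} = \tfrac{M}{TN^\alpha}$ appearing in \eqref{laimj}, and the proposition's hypothesis that this ratio tends to zero is exactly what is needed to kill it. So the main obstacle is not any single estimate but rather carefully tracking which errors get amplified by the $1/\Dt$ factor and verifying that after summation they all collapse into $C\,h^\alpha/\Dt + C\,\Dt + C\,h^\alpha$, each tending to zero under \eqref{laimj}. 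I would organise the proof by writing the master telescoped inequality first, then isolating each of the four discrepancies as a separate lemma-style estimate, and finally collecting them into the definition of $\mathrm{Err}_{N,M}$.
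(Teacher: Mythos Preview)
Your proposal is correct and follows essentially the same route as the paper's proof: both start from the discrete characterization in Lemma~\ref{LTidea}, bridge $J$ and $J^h$ via Lemma~\ref{J-difference}, handle the time interpolation of $J^h$ via Lemma~\ref{variation-monotonicity-lemma}, control the spatial piecewise-linear/piecewise-constant discrepancy via Lemma~\ref{useful} together with the $L^2(\Omega_T)$ bound on $\od U_{N,M}$ from Lemma~\ref{FDTbound}, and arrive at an error of the form $C\Dt + Ch^\alpha + C\,h^\alpha/\Dt$. The only organizational difference is that the paper starts from the continuous left-hand side of~\eqref{keyineq21} and peels off error terms until the discrete inequality~\eqref{FDineq} emerges, whereas you start from~\eqref{FDineq} and build up; the two are mirror images of each other.

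One small correction: your remark that the monotonicity $J^h(u^k)\le J^h(u^h(t))$ has a sign that ``only helps the inequality'' is in fact backwards. In the chain you need an \emph{upper} bound on $J^h(u^h(t)) - J^h(u^k)$ (since $-J(U_{N,M})\approx -J^h(u^h(t))$ and you want to replace it by $-J^h(u^k)$, which is larger), and monotonicity only gives the lower bound $0$. Your fallback ``alternatively one bounds it crudely'' is the right move: convexity plus monotonicity give $0\le J^h(u^h(t))-J^h(u^k)\le \frac{t_k-t}{\Dt}\bigl(J^h(u^{k-1})-J^h(u^k)\bigr)$, and summing telescopes to $\frac{\Dt}{2}\bigl(J^h(u^0)-J^h(u^M)\bigr)=O(\Dt)$. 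This is what the paper does as well.
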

\begin{proof}
The idea of the proof is to rewrite the left-hand side of~(\ref{keyineq21}) as the left-hand side of (\ref{FDineq}) plus some error and bound the
error. As the preparation for a long calculation, we first remind the reader that for $t\in (t_{k-1}, t_k)$,
$$
U_{N,M}(\cdot,t)=U_{N,M}(\cdot, t_{k-1})(t_k-t)/\Dt+U_{N,M}(\cdot,t_k)(t-t_{k-1})/\Dt
$$
and
\begin{equation}
\label{Flux}
\od U_{N,M}(\cdot, t)=\frac{U_{N,M}(\cdot, t_k)- U_{N,M}(\cdot,t_{k-1})}{\Dt}.
\end{equation}
and
$
v_N(\cdot, t) = \mathcal{I}^h v_\epsilon(\cdot, t)
$
as defined in~(\ref{v_N-def}).

Without loss of generality, we consider the integration over $[0, T]$ instead of $[0, s]$. We rewrite the first term of the left-hand side of
~(\ref{keyineq21}) as
\begin{align}
\label{temp-1}
&\phantom{ {}={}} \int_0^T\int_\Omega \od U_{N,M} (v_N(\cdot,t)-U_{N,M}(\cdot.t))\,dxdt \nonumber \\
&= \sum_{k=1}^{M}\int_{t_{k-1}}^{t_k} \int_\Omega \frac{U_{N,M}(\cdot, t_k)- U_{N,M}(\cdot,t_{k-1})}{\Dt}(v_N(\cdot, t)
- U_{N,M}(\cdot, t))\,dxdt\nonumber\\
&= \sum_{k=1}^{M}\int_{t_{k-1}}^{t_k} \int_\Omega \frac{U_{N,M}(\cdot, t_k)
- U_{N,M}(\cdot,t_{k-1})}{\Dt}( v_N(\cdot, t) - U_{N,M}(\cdot, t_{k}))\,dxdt + \hbox{Err}_1.
\end{align}
where
\begin{align*}
\hbox{Err}_1 &= \sum_{k=1}^M\int_{t_{k-1}}^{t_k}\int_\Omega \frac{U_{N,M}(\cdot, t_k)- U_{N,M}(\cdot,t_{k-1})}{\Dt}( U_{N,M}(\cdot,
t_{k}) - U_{N,M}(\cdot, t))\,dxdt\\
& = \sum_{k=1}^M\int_{t_{k-1}}^{t_k}\int_\Omega \frac{U_{N,M}(\cdot, t_k)- U_{N,M}(\cdot,t_{k-1})}{\Dt}( U_{N,M}(\cdot, t_{k}) - U_{N,M}(\cdot,
t_{k-1}))\frac{t_k-t}{\Dt}\,dxdt.\\
\end{align*}
We bound $\hbox{Err}_1$ by
\begin{align*}
\left|\hbox{Err}_1\right|  &\leq
\sum_{k=1}^M\int_{t_{k-1}}^{t_k}\int_\Omega \left|\frac{U_{N,M}(\cdot, t_k)- U_{N,M}(\cdot,t_{k-1})}{\Dt}( U_{N,M}(\cdot, t_{k}) - U_{N,M}
(\cdot, t_{k-1}))\frac{t_k-t}{\Dt}\right|\,dxdt.\\
&\leq \sum_{k=1}^M \int_\Omega \left|\frac{U_{N,M}(\cdot, t_k)- U_{N,M}(\cdot,t_{k-1})}{\Dt}( U_{N,M}(\cdot, t_{k}) - U_{N,M}
(\cdot, t_{k-1}))\,dx\right|\Dt \\
&= \Dt \left\|\frac{dU_{N,M}}{dt}\right\|^2_{L^2(\Omega_T)} \le C\Dt,
\end{align*}
where  the last inequality comes from Lemma~\ref{FDTbound}.

To apply the characteristic inequality~(\ref{FDineq}), we need to replace all the piecewise linear functions in~(\ref{temp-1})
by piecewise constant functions and bound the introduced error. Recall discrete functions $v^h_\epsilon(\cdot, t)$ and $u^h(\cdot, t)$ defined in~(\ref
{Q-operator}) and~(\ref{u-t-def}) respectively. We replace $v_N(\cdot, t), U_{N,M}(\cdot, t)$ in~(\ref{temp-1}) by $v^h_\epsilon(\cdot, t)$, and
$u^h(\cdot, t)$ respectively and add an error term. To simplify the presentation, we introduce the following notations to
denote the difference between a continuous function and a piecewise constant function;
\begin{eqnarray*}
    \Delta v_N(\cdot, t) &:=& v_N(\cdot, t) - v^h_\epsilon(\cdot, t), \\
    \Delta U_{N,M}(\cdot, t)&:=& U_{N,M}(\cdot, t) - u^h(\cdot, t).
\end{eqnarray*}
Then
\begin{eqnarray*}
&& \sum_{k=1}^M \int_{t_{k-1}}^{t_k}\int_\Omega \frac{U_{N,M}(\cdot, t_k)- U_{N,M}(\cdot,t_{k-1})}{\Dt}(v_N(\cdot, t) - U_{N,M}(\cdot, t_k))\cr
&=& \sum_{k=1}^M \int_{t_{k-1}}^{t_k}\int_\Omega \frac{u^h(\cdot, t_k)- u^h(\cdot,t_{k-1})}{\Dt}(v^h_\epsilon(\cdot, t) -
u^h(\cdot,t_k))dx  + \mbox{Err}_2,
\end{eqnarray*}
where $\hbox{Err}_2$ can be written as
\begin{align*}
\mbox{Err}_2
&=\sum_{k=1}^M \int_{t_{k-1}}^{t_k}\int_\Omega \frac{\Delta U_{N,M}(\cdot, t_k) - \Delta U_{N,M}(\cdot, t_{k-1})}{\Delta t}(v^h_\epsilon(\cdot, t) -
u^h(\cdot, t_k)) \\
&\qquad + \sum_{k=1}^M \int_{t_{k-1}}^{t_k}\int_\Omega \frac{u^h(\cdot, t_k) - u^h(\cdot, t_{k-1})}{\Delta t}(\Delta v_N(\cdot, t) -
 \Delta U_{N,M}(\cdot, t_k)) \\
&\qquad + \sum_{k=1}^M \int_{t_{k-1}}^{t_k}\int_\Omega \frac{\Delta U_{N,M}(\cdot, t_k) - \Delta U_{N,M}(\cdot,
t_{k-1})}{\Delta t}(\Delta v_N(\cdot, t) - \Delta U_{N,M}(\cdot, t_k)).
\end{align*}

The three terms in $\hbox{Err}_2$ can be bounded in a similar fashion. We only give the details of the bounds for the first and second terms.
The third term can be bounded similarly. We first point out the following facts,
$\|v^h_\epsilon\|, \left\|u^h\right\| \le C$ that can be easily proved with Lemma~\ref{FDstable}.
Note that by Lemma~\ref{useful}
\begin{equation*}
\|\Delta U_{N,M}\|_{L^1([0,T];L^2(\Omega))} \le CT(\|u^0\|_{\opLip(\alpha, L^2(\Omega))} + \|f\|_{\opLip(\alpha, L^2(\Omega))})h^\alpha.
\end{equation*}
By using Cauchy-Schwarz inequality, the first term in $\hbox{Err}_2$ can be bounded by
$$
\frac{2}{\Delta t} \|\Delta U_{N,M}\|_{L^2([0,T];L^2(\Omega))} (\|v^h_\epsilon\| + \left\|u^h\right\|)
\le CT  (\|u^0\|_{\opLip(\alpha, L^2(\Omega))} + \|f\|_{\opLip(\alpha, L^2(\Omega))}) \frac{h^\alpha}{\Delta t}.
$$
Next we look at the second term in $\hbox{Err}_2$.
\begin{eqnarray*}
    \|\Delta v_N(\cdot, t)\|_{L^2(\Omega)} &=& \|\mathcal{I}^h v_\epsilon(\cdot, t) - \Ph v_\epsilon(\cdot, t)\|_{L^2(\Omega)}\\
    &\le& \|\mathcal{I}^h v_\epsilon - v_\epsilon\|_{L^2(\Omega)} + \|v_\epsilon - \Ph v_\epsilon\|_{L^2(\Omega)}\\
    &\le& C\|\mathcal{I}^h v_\epsilon - v_\epsilon\|_{W^{1,1}(\Omega)} + Ch|v_\epsilon|_{W^{2,1}(\Omega)} \\
    &\le& C\|\mathcal{I}^h v_\epsilon - v_\epsilon\|_{W^{1,1}(\Omega)} + C\frac{h}{\epsilon}|v_\epsilon|_{W^{1,1}(\Omega)}
\le C h^\alpha \|v(\cdot, t)\|_{W^{1,1}(\Omega)}
\end{eqnarray*}
by using (\ref{v_epsilon-int-v-bound})(and recall that $\epsilon = h^{1-\alpha}$).

Then  the second term  in $\hbox{Err}_2$ is bounded by
\begin{eqnarray*}
    && \sum_{k=1}^M\int_{t_{k-1}}^{t_k} \int_\Omega\frac{u^h(\cdot, t_k)-u^h(\cdot, t_{k-1})}{\Delta t} (\Delta v_N(\cdot, t) - \Delta U_{N,M}(\cdot, t_k))\,dxdt \\
    &\le& \sum_{k=1}^M\int_{t_{k-1}}^{t_k}C\left\|\frac{d}{dt}U_{N,M}\right\|_{L^2(\Omega)}\left\|\Delta v_N(\cdot, t) - \Delta U_{N,M}(\cdot, t_k)\right\|_{L^2(\Omega)}\,dt\\
    &\le& C\left(\|\Delta v_N\|_{L^1([0,T];L^2(\Omega))} + \|\Delta U_{N,M}\|_{L^1([0,T];L^2(\Omega))}\right)\\
    &\le& CT(\|u_0\|_{\opLip(\alpha, L^2)} + \|f\|_{\opLip(\alpha, L^2)} + \|v\|_{L^1([0, T]; W^{1,1}(\Omega))})h^\alpha,
\end{eqnarray*}
where we have used Lemmas~\ref{FDTbound}, \ref{useful} and \ref{approximation}.
We also bound the other two terms with the order of $h$ being $1$ and $1+\alpha$ respectively.
Consuming all higher orders of $h$, the left side of~(\ref{keyineq21}) can be bounded from below by
\begin{equation*}
\sum_{k=1}^M \int_{t_{k-1}}^{t_k}\sum_{i,j}\frac{u^k_{i,j}-u^{k-1}_{i,j}}{\Dt}(v^h_{\epsilon,i,j}- u^k_{i,j})\,h^2 -
C(\|u_0\|_{\opLip(\alpha, L^2)} + \|f\|_{\opLip(\alpha,L^2)} + \|v\|_{L^1([0,T];W^{1,1}(\Omega))}) Th^\alpha.
\end{equation*}
We sum up our bound on~(\ref{temp-1}) as
\begin{align}
\label{key-bound-1}
&\phantom{ {}={}} \int_0^T\int_\Omega \od U_{N,M} (v_N(\cdot,t)-U_{N,M}(\cdot.t))\,dxdt \nonumber \\
&\ge \sum_{k=1}^M \int_{t_{k-1}}^{t_k}\sum_{i,j}\frac{u^k_{i,j}-u^{k-1}_{i,j}}{\Dt}(v^h_{\epsilon, i,j}- u^k_{i,j})\,h^2 \nonumber \\
&\qquad -C(\|u_0\|_{\opLip(\alpha, L^2)} + \|f\|_{\opLip(\alpha,L^2)} + \|v\|_{L^1([0,T];W^{1,1}(\Omega))})T
\frac{h^\alpha}{\Delta t} - C\Delta t.
\end{align}

We next bound the second term of the left-hand side of~(\ref{keyineq21})(the variation term),
\begin{align*}
\int_0^T J(v_N(\cdot,t)) - J(U_{N,M}(\cdot,t))\,dt
&= \sum_{k=1}^M \int_{t_{k-1}}^{t_k} J(v_N(\cdot,t))  - J(U_{N,M}(\cdot,t))\,dt\\
&= \sum_{k=1}^M \int_{t_{k-1}}^{t_k} J^h(v^h_\epsilon(t)) - J^h(u^h(t_k))\,dt + \hbox{Err}_3
\end{align*}
with
\begin{align*}
\hbox{Err}_3
&= \sum_{k=1}^M \int_{t_{k-1}}^{t_k} J(v_N(\cdot, t)) - J^h(v^h_\epsilon(t))\,dt -
\sum_{k=1}^M\int_{t_{k-1}}^{t_k}J^h(u^h(t)) - J^h(u^h(t_k))\,dt - \cr
&\qquad \sum_{k=1}^M\int_{t_{k-1}}^{t_k} J(U_{N,M}(\cdot, t)) - J^h(u^h(t))\,dt
\end{align*}
By Lemma~\ref{J-difference}, the first and the third term can be bounded by $C_1h^\alpha T$ and $C_2h^\alpha T$ respectively. To
bound the second term we use  the convexity of $J^h$ and the monotonicity of $J^h$ shown in Lemma~\ref{variation-monotonicity-lemma},
\begin{align*}
&\phantom{ {}={}} \left| \sum_{k=1}^M\int_{t_{k-1}}^{t_k}J^h(u^h(t)) - J^h(u^h(t_k))\,dt  \right|  \\
&\leq \sum_{k=1}^M \int_{t_{k-1}}^{t_k}\left|\frac{t-t_{k-1}}{\Dt} J^h(u^h(t_k))
+ \frac{t_k - t}{\Dt} (J^h(u^h(t_{k-1})) - J^h(u^h(t_k)))\right|\,dt \\
&= \sum_{k=1}^M \left|J^h(u^h(t_{k-1}))- J^h(u^h(t_k))\right| \int_{t_{k-1}}^{t_k} \frac{t_k - t}\Dt \,dt
\leq \sum_{k=1}^M 2Ch^\alpha \Dt = CTh^\alpha ,
\end{align*}
where we have used Lemma~\ref{J-difference}.

Collecting the results together, we have
\begin{align}
\label{key-bound-2}
\phantom{ {}={} }\int_0^T J(v_N(\cdot,t)) - J(U_{N,M}(\cdot,t))\,dt
\ge \sum_{k=1}^M \int_{t_{k-1}}^{t_k} J^h(v^h_\epsilon(\cdot,t)) - J^h(u^h(t_k))\,dt -C h^\alpha T .
\end{align}
Put all the bounds~(\ref{key-bound-1}) and~(\ref{key-bound-2}) together, we have
\begin{align*}
&\phantom{ {}={} }\int_0^T\int_\Omega \od U_{N,M} (v_N(\cdot,t)-U_{N,M}(\cdot.t))\,dxdt + \int_0^T J(v_N(\cdot,t)) - J(U_{N,M}(\cdot,t))\,dt\\
&\ge \sum_{k=1}^M \int_{t_{k-1}}^{t_k}\left\{\sum_{i,j}\frac{u^k_{i,j}-u^{k-1}_{i,j}}{\Dt}(v_{i,j}- u^k_{i,j})\,h^2
+ J^h(v^h_\epsilon(\cdot,t)) - J^h(u^h(t_k))\,dt\right\}\\
&\qquad -C(\|u_0\|_{\opLip(\alpha, L^2)} + \|f\|_{\opLip(\alpha,L^2)} + \|v\|_{L^1([0,T];W^{1,1}(\Omega))}) T
\frac{h^\alpha}{\Delta t} - C\Delta t - Ch^\alpha T.
\end{align*}
Using Lemma~\ref{LTidea} for the first term on the right-hand side of the inequality above,
we let $h$, $\Delta t$ tend to zero in the fashion (\ref{laimj}) to obtain  the desired result.
\end{proof}

Finally we are ready to prove the main result of this section.
\begin{theorem}
\label{mainFDconv}
Suppose that $u_0 \in W^{1,1}(\Omega), f \in \opLip(\alpha, L^2(\Omega))$.
There exists a function $U^*$ in $L^2(\Omega_T)$ so that $U_{N,M}$ converge to $U^*$ weakly as $N, M \to \infty$ in the fashion
(\ref{laimj}) and $U^*$ is the weak solution of (\ref{epsTEROF2}).
\end{theorem}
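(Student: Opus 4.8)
The plan is to identify the limit $U^*$ through the variational characterization of the weak solution provided by Theorem~\ref{FengThm1}, rather than by verifying the weak form~(\ref{weakform}) directly (which is awkward since $U^*$ will only be $\BV$ in $x$). First I would extract the limit. By Lemmas~\ref{FDTbound} and~\ref{FDL2bound} the families $\{U_{N,M}\}$ and $\{\od U_{N,M}\}$ are bounded in $L^2(\Omega_T)$, so along a subsequence $U_{N,M}\rightharpoonup U^*$ and $\od U_{N,M}\rightharpoonup \od U^*$ weakly in $L^2(\Omega_T)$. The bound on $\od U_{N,M}$ also yields the uniform estimate $\|U_{N,M}(\cdot,t)-U_{N,M}(\cdot,t')\|_{L^2(\Omega)}\le C\,|t-t'|^{1/2}$; hence $U^*$ has a representative in $C([0,T];L^2(\Omega))$, $U_{N,M}(\cdot,s)\rightharpoonup U^*(\cdot,s)$ in $L^2(\Omega)$ for every fixed $s$, and, since $U_{N,M}(\cdot,0)$ is the piecewise linear interpolant of the cell averages of $u_0$ and converges strongly to $u_0$, we get $U^*(\cdot,0)=u_0$. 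Finally $J^h(u^k)\le J^h(u^0)\le C$ together with Lemma~\ref{J-difference} bounds $J(U_{N,M}(\cdot,t_k))$ uniformly, which gives a uniform $\BV$ bound on $U_{N,M}(\cdot,t_k)$ and hence $U^*\in L^\infty([0,T];\BV(\Omega))\subset L^1([0,T];\BV(\Omega))$.

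By the converse part of Theorem~\ref{FengThm1}, it then suffices to prove that $U^*$ satisfies inequality~(\ref{FengIne}) for every admissible test function $v$; because the weak solution is unique, establishing this for every subsequential limit forces the entire family $U_{N,M}$ to converge to $U^*$, as claimed. By a routine density argument it is enough to verify~(\ref{FengIne}) for $v$ that is, in addition, Lipschitz in $t$ (with values in $W^{1,1}(\Omega)\cap L^2(\Omega)$); this reduction is as in~\cite{FP03}. Fix such a $v$, let $\{v_N\}$ be the approximating sequence of Lemma~\ref{spline-approximation}, constructed so that $v_N\to v$ and $\od v_N\to\od v$ strongly in $L^2(\Omega_T)$, and start from Proposition~\ref{proposition1}. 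The key algebraic manoeuvre is to integrate by parts in $t$:
\[
\int_0^s\!\!\int_\Omega \od U_{N,M}\,(v_N-U_{N,M})\,dx\,dt
=\int_0^s\!\!\int_\Omega \od v_N\,(v_N-U_{N,M})\,dx\,dt
-\tfrac12\bigl\|v_N(\cdot,s)-U_{N,M}(\cdot,s)\bigr\|_{L^2(\Omega)}^2
+\tfrac12\bigl\|v_N(\cdot,0)-U_{N,M}(\cdot,0)\bigr\|_{L^2(\Omega)}^2 ,
\]
which turns~(\ref{keyineq21}) into precisely the shape of~(\ref{FengIne}), modulo the vanishing error $\hbox{Err}_{N,M}$.

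Then I would let $N,M\to\infty$ in the regime~(\ref{laimj}). On the left-hand side, $\od v_N\to\od v$ and $v_N\to v$ strongly while $U_{N,M}\rightharpoonup U^*$ weakly, so $\int_0^s\!\int_\Omega \od v_N\,(v_N-U_{N,M})\to\int_0^s\!\int_\Omega \od v\,(v-U^*)$; moreover $\int_0^s J(v_N)\,dt\to\int_0^s J(v)\,dt$, while the functional $v\mapsto\int_0^s J(v(\cdot,t))\,dt$, being convex and lower semicontinuous on $L^2(\Omega_T)$ (with the variation term understood in the relaxed $\BV$ sense), is weakly lower semicontinuous, giving $\liminf\int_0^s J(U_{N,M})\,dt\ge\int_0^s J(U^*)\,dt$. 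On the right-hand side, $v_N(\cdot,0)\to v(\cdot,0)$ and $U_{N,M}(\cdot,0)\to u_0$ strongly, $v_N(\cdot,s)-U_{N,M}(\cdot,s)\rightharpoonup v(\cdot,s)-U^*(\cdot,s)$ weakly in $L^2(\Omega)$ so $\liminf\|v_N(\cdot,s)-U_{N,M}(\cdot,s)\|_{L^2(\Omega)}^2\ge\|v(\cdot,s)-U^*(\cdot,s)\|_{L^2(\Omega)}^2$, and $\hbox{Err}_{N,M}\to0$. Since the left-hand side of the rewritten inequality is $\ge$ its right-hand side for every $N,M$, taking $\limsup$ on the left and $\liminf$ on the right produces~(\ref{FengIne}) for $U^*$, and the proof is complete.

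The step I expect to be the main obstacle is the passage to the limit in $\int_0^s\!\int_\Omega \od U_{N,M}\,(v_N-U_{N,M})$: it is a product of two sequences known only to converge weakly, and the monotonicity/compensation that saves it is hidden inside the characteristic inequality of Lemma~\ref{LTidea} and thence in Proposition~\ref{proposition1}. The integration-by-parts-in-$t$ identity above is what makes it work, by shifting the derivative onto the strongly convergent factor $v_N$ and leaving $L^2$ boundary-in-time terms whose correct one-sided limits come from weak lower semicontinuity of the norm. A secondary technical nuisance is that $U^*$ only lies in $\BV$, so the variation functional and its lower semicontinuity must be treated in the relaxed sense, and the pointwise-in-$t$ weak convergence $U_{N,M}(\cdot,s)\rightharpoonup U^*(\cdot,s)$ together with $U^*(\cdot,0)=u_0$ must be squeezed out of the uniform H\"older-$\tfrac12$-in-time estimate that the $L^2$ bound on $\od U_{N,M}$ supplies.
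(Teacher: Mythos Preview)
Your proposal is correct and follows essentially the same route as the paper: extract a weak limit via Lemmas~\ref{FDTbound}--\ref{FDL2bound}, invoke Proposition~\ref{proposition1}, convert to the shape of~(\ref{FengIne}) by the integration-by-parts-in-$t$ identity (which the paper cites from~\cite{FP03} as the equivalence between its displayed inequality and~(\ref{entropy-inequality2})), and conclude by weak lower semicontinuity of $J$ and of the $L^2$ norm together with uniqueness of the weak solution. The only organisational difference is that the paper first replaces $v_N$ by $v$ (bounding the error $e_{N,M}$ via Lemma~\ref{FDTbound}) and then passes to the limit in $U_{N,M}$, whereas you take both limits simultaneously; this is harmless, but note that Lemma~\ref{spline-approximation} as stated only gives $v_N\to v$ in $L^1([0,T];L^2)$ and says nothing about $\od v_N$, so you should either argue (as you implicitly do) that the purely spatial construction commutes with $\od$ and hence $\od v_N\to \od v$ in $L^1([0,T];L^2)$ for $v$ Lipschitz in $t$, or simply follow the paper and swap $v_N$ for $v$ first using only the $L^2(\Omega_T)$ bound on $\od U_{N,M}$.
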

\begin{proof}
By Lemma~\ref{FDL2bound}, there exists a weakly convergent subsequence of $\{U_{N,M},  N\ge 1, M\ge 1\}$
in $L^2(\Omega_T)$. For convenience, we assume the whole sequence
converges to $U^*\in L^2(\Omega_T)$ weakly.
We now show $U^*$ is the weak solution of the gradient flow as in Definition~\ref{weaksol}. As the weak solution is
unique, the whole sequence $\{U_{N,M}, N\ge 1, M\ge 1\}$ converges weakly to $U^*$.

By using Theorem~\ref{FengThm1}, we need to show that $U^*$ satisfies the following inequality:
\begin{eqnarray}
\label{FengXiaobing}
&&\int_0^s \int_\Omega \od v (v- U^*)dxdt +
\int_0^s (J(v)- J(U^*)) dt \cr
&\ge & \frac{1}{2}\left[ \int_\Omega (v(x, s)- U^*(x, s))^2dx -
\int_\Omega (v(x,0)- u_0(x,0))^2dx\right]
\end{eqnarray}
for all $v \in L^1([0, T], W^{1,1}(\Omega))$ with ${\partial
\over \partial {\bf n}}v(x,t) =0$ for all $(t,x)\in [0, T)\times
\partial \Omega$, where
$$
J(u) =\int_\Omega \sqrt{\epsilon+|\nabla u(x,t)|^2}dx + \frac{1}{2\lambda}
\int_\Omega |f(x,t)- u(x,t)|^2dx.
$$
By the lower semi-continuity of $J$, Fatou's lemma and standard weak convergence, we have
\begin{eqnarray}
\label{firsteq}
&&\int_0^s \int_\Omega \od v (v- U^*)dxdt +
\int_0^s (J(v)- J(U^*)) dt \cr
&\ge & \liminf_{N, M\to \infty}\left[
\int_0^s \int_\Omega \od v (v- U_{N,M})dxdt +
\int_0^s (J(v)- J(U_{N,M})) dt\right].
\end{eqnarray}
By the weak lower semi-continuity of the $L^2$ norm
\begin{eqnarray}
\label{secondeq}
&& \liminf_{N,M\to \infty} \frac{1}{2}\left[ \int_\Omega (v(x, s)- U_{N,M}(x, s))^2dx -
\int_\Omega (v(x,0)- u_0(x,0))^2dx\right]  \cr
&\ge & \frac{1}{2}\left[ \int_\Omega (v(x, s)- U^*(x, s))^2dx -
\int_\Omega (v(x,0)- u_0(x,0))^2dx\right].
\end{eqnarray}

We now prove the following inequality to finish the proof.
\begin{align*}
& \int_0^s \int_\Omega \od v (v- U_{N,M})dxdt +
\int_0^s (J(v)- J(U_{N,M})) dt \cr
& \ge
\frac{1}{2}\left[ \int_\Omega (v(x, s)- U_{N,M}(x, s))^2dx -
\int_\Omega (v(x,0)- u_0(x,0))^2dx\right] - \hbox{Error}_{N,M}
\end{align*}
where $\hbox{Error}_{N,M}>0$ is an error term that goes to zero as $N, M\to \infty$.
It's straightforward to verify(cf. \cite{FP03}) that the above inequality is equivalent to
\begin{equation}
\label{entropy-inequality2}
\int_0^s \int_\Omega \od U_{N,M} (v- U_{N,M})dxdt +
\int_0^s (J(v)- J(U_{N,M})) dt
\ge -\hbox{Error}_{N,M}.
\end{equation}

By Proposition~\ref{proposition1}, there exits a sequence $\{v_N\}$, so that
\begin{equation*}
    \lim_{N\to\infty} v_N = v   \qquad \mbox{in $L^1([0,T];W^{1,1}(\Omega))$,.}
\end{equation*}
and
\begin{equation*}
\int_0^s \left[\int_\Omega \od U_{N,M} (v_N- U_{N,M})dx + (J(v_N)- J(U_{N,M}))\right] dt
\ge  -\hbox{Err}_{N,M}
\end{equation*}
where $\mbox{Err}_{N,M}$ only depends on $f$ and $v$, and tends to zero as $N,M$ tend to infinity. We replace the original $W^{1,1}$ test function
 $v(\cdot, t)$ in~(\ref{entropy-inequality2}) by $v_N$ that is in $L^1([0, T], S^0_1(\Delta_N))$, therefore introduces an error $e_{N,M}$.

$$
e_{N,M} = \int_0^s \int_\Omega\od U_{N,M}(v-v_N) + J(v) - J(v_N).
$$
It is easy to show $e_{N,M}$ tends to zero as $N, M$ go to infinity by
Lemmas~\ref{FDTbound} and ~\ref{spline-approximation}. Thus we complete the proof.
\end{proof}

\section{Numerical Solution of Our Finite Difference Scheme}
The system (\ref{FD2}) of nonlinear equations has been solved by many methods as explained in \cite{VO96}. In \cite{DV97}, the researchers
provided an analysis of a fixed point method proposed in \cite{VO96} based on auxiliary variable and functionals and
proved that the iterative method converges.
In this section, we mainly present another method to show the convergence of the fixed point method. From notation simplicity,
we assume the grid size $h = 1$ in this section that has no influence in the convergence analysis of our algorithm.

First of all, let us explain the fixed point method. Recall that we need to solve
$\{u^k_{i,j}, 0\le i, j\le N-1\}$ from the following equations
\begin{align*}
\frac{u^k_{i,j}- u^{k-1}_{i,j}}\Dt & -
\frac{1}{2}\hbox{ div}^+ \left( \dfrac{\nabla^+ u^k_{i,j}}{\sqrt{\epsilon + |\nabla^+ u^k_{i,j}|^2}}\right)
-\frac{1}{2}\hbox{ div}^- \left( \dfrac{\nabla^- u^k_{i,j}}{\sqrt{\epsilon + |\nabla^- u^k_{i,j}|^2}} \right)  \nonumber\\
&\qquad+ \frac{1}{\lambda}(u^k_{i,j}- f^h_{i,j}) = 0,\quad   0\le i, j\le N-1,
\end{align*}
assuming that we have the solution $\{u^{k-1}_{i,j}, 0\le i, j\le N-1\}$. Let us define an iterative algorithm to compute
$u^k_{i,j}$.
\begin{algorithm}
\label{FDalgo}
Starting with $v^0_{i,j}= u^{k-1}_{i,j}, 0\le i, j\le N-1$, for $\ell=1, 2, \cdots, $, we compute array $\{v^\ell_{i,j}, 0 \le i,j \le N-1\}$ by
\begin{align}
\label{FDiter}
\frac{v^{\ell}_{i,j}- u^{k-1}_{i,j}}\Dt &=
\frac{1}{2}\divp \left( \dfrac{\nabla^+ v^\ell_{i,j}}{\sqrt{\epsilon + |\nabla^+ v^{\ell-1}_{i,j}|^2}}
\right)
+\frac{1}{2}\divm \left( \dfrac{\nabla^- v^\ell_{i,j}}{\sqrt{\epsilon + |\nabla^- v^{\ell-1}_{i,j}|^2}} \right)  \nonumber\\
&\qquad- \frac{1}{\lambda}(v^\ell_{i,j}- f^h_{i,j}), \quad   0\le i, j\le N-1,
\end{align}
together with boundary conditions in (\ref{FD2}).
\end{algorithm}

We now show that the iterative solutions $\{v^\ell_{i,j}, 0\le i, j\le N-1\}, \ell\ge 0$ converge. Indeed, we first have
\begin{lemma}
There exists a positive constant $C$ dependent only on $f$ and initial values $u^{k-1}_{i,j}$ such that
\begin{equation}
\label{FDiter2}
\|v^\ell\|^2 := \sum_{i,j} |v^\ell_{i,j}|^2 \le C
\end{equation}
for all $\ell\ge 1$.
\end{lemma}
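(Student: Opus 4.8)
The plan is to obtain the bound by testing the iteration~(\ref{FDiter}) against $v^\ell$ itself and exploiting the fact that, at each step, $v^{\ell-1}$ enters only through the \emph{positive} coefficients $1/\sqrt{\epsilon+|\nabla^\pm v^{\ell-1}_{i,j}|^2}$. First I would note that for each $\ell\ge 1$ the array $v^\ell$ is well defined once $v^{\ell-1}$ is known: equation~(\ref{FDiter}) is a linear system for $v^\ell$, and applying the summation-by-parts identities $\langle-\divp p,u\rangle=\langle p,\nabla^+u\rangle$ and $\langle-\divm p,u\rangle=\langle p,\nabla^-u\rangle$ shows that the associated bilinear form is
$$
\Big(\tfrac1\Dt+\tfrac1\lambda\Big)\langle v,w\rangle+\tfrac12\Big\langle\tfrac{\nabla^+v}{\sqrt{\epsilon+|\nabla^+v^{\ell-1}|^2}},\nabla^+w\Big\rangle+\tfrac12\Big\langle\tfrac{\nabla^-v}{\sqrt{\epsilon+|\nabla^-v^{\ell-1}|^2}},\nabla^-w\Big\rangle,
$$
which is symmetric and coercive since $\tfrac1\Dt+\tfrac1\lambda>0$ and the two gradient terms are nonnegative quadratic forms; equivalently, $v^\ell$ is the unique minimizer of the strictly convex quadratic functional obtained from $E^h$ by freezing the weights at $v^{\ell-1}$.

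Next I would multiply~(\ref{FDiter}) at index $(i,j)$ by $v^\ell_{i,j}$ and sum over $0\le i,j\le N-1$. Transferring the discrete divergences onto $v^\ell$ by the same summation-by-parts identities gives
$$
\tfrac1\Dt\|v^\ell\|^2+\tfrac1\lambda\|v^\ell\|^2+\tfrac12\sum_{i,j}\tfrac{|\nabla^+v^\ell_{i,j}|^2}{\sqrt{\epsilon+|\nabla^+v^{\ell-1}_{i,j}|^2}}+\tfrac12\sum_{i,j}\tfrac{|\nabla^-v^\ell_{i,j}|^2}{\sqrt{\epsilon+|\nabla^-v^{\ell-1}_{i,j}|^2}}=\tfrac1\Dt\langle u^{k-1},v^\ell\rangle+\tfrac1\lambda\langle f^h,v^\ell\rangle .
$$
Dropping the two manifestly nonnegative gradient sums and applying the Cauchy--Schwarz inequality on the right yields
$$
\Big(\tfrac1\Dt+\tfrac1\lambda\Big)\|v^\ell\|^2\le\Big(\tfrac1\Dt\|u^{k-1}\|+\tfrac1\lambda\|f^h\|\Big)\|v^\ell\|,
$$
hence (discarding the trivial case $v^\ell=0$)
$$
\|v^\ell\|\le\frac{\tfrac1\Dt\|u^{k-1}\|+\tfrac1\lambda\|f^h\|}{\tfrac1\Dt+\tfrac1\lambda}\le\max\{\|u^{k-1}\|,\|f^h\|\}.
$$
Since the right-hand side is independent of $\ell$, the claim follows with $C:=\max\{\|u^{k-1}\|^2,\|f^h\|^2\}$, which depends only on $f$ and on the initial array $u^{k-1}$.

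There is no real obstacle here; the only points requiring care are the sign bookkeeping when moving the discrete divergence operators onto $v^\ell$, and the observation — which is precisely why the estimate is uniform in $\ell$ — that the frozen weights $1/\sqrt{\epsilon+|\nabla^\pm v^{\ell-1}|^2}$ enter only through nonnegative quadratic forms, so that their actual values play no role in the bound. (The argument is a discrete, one-iterate analogue of the a~priori $L^2$ bound~(\ref{l2-bound}) and of the proof of Theorem~\ref{FDstable}.)
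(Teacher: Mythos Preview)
Your proof is correct and follows essentially the same approach as the paper: test~(\ref{FDiter}) against $v^\ell$, use the duality $\langle-\divp p,u\rangle=\langle p,\nabla^+u\rangle$ (and its $\nabla^-$ counterpart) to produce nonnegative gradient terms that are then dropped, and conclude via Cauchy--Schwarz. Your version is slightly more explicit in that you also address well-definedness of $v^\ell$ and extract the clean bound $\|v^\ell\|\le\max\{\|u^{k-1}\|,\|f^h\|\}$, but the core argument is the same.
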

\begin{proof}
Multiplying $v^\ell_{i,j}$ to the equation (\ref{FDiter}) and summing over $i, j=0, \cdots, N-1$, we have
\begin{eqnarray*}
\frac{\|v^\ell\|^2}\Dt &=& \frac{1}\Dt\sum_{i,j} u^{k-1}_{i,j}v^\ell_{i,j} -\frac{1}{2} \sum_{i,j} \dfrac{\nabla^+ v^\ell_{i,j}
\nabla^+ v^\ell_{i,j}}{\sqrt{\epsilon + |\nabla^+ v^{\ell-1}_{i,j}|^2}} \cr
&& - \frac{1}{2} \sum_{i,j} \dfrac{\nabla^- v^\ell_{i,j} \nabla^- v^\ell_{i,j}}
{\sqrt{\epsilon + |\nabla^- v^{\ell-1}_{i,j}|^2}} - \frac{1}{\lambda}\|v^\ell\|^2 + \frac{1}{\lambda}\sum_{i,j}f^h_{i,j}v^\ell_{i,j}.
\end{eqnarray*}
By using the Cauchy-Schwarz equality, it follows that
\begin{eqnarray*}
(\frac{1}\Dt+\frac{1}{\lambda}) \|v^\ell\|^2 \le \frac{1}\Dt \|u^{k-1}_{i,j}\|\|v^\ell\| + \frac{1}{\lambda}\|f^h\|\|v^\ell\|.
\end{eqnarray*}
Hence, $\|v^\ell\|$ is bounded by a constant $C$ independent of $\ell$.
\end{proof}

It follows that the sequence of  vectors $\{v^\ell_{i,j}, 0\le i, j\le N-1\}, \ell\ge 1$ contains a convergent subsequence.
Let us say the vectors
$v^{\ell_k}_{i,j}, 0\le i, j\le N-1$ converge to $v^*_{i,j}, 0\le i, j\le N-1$. Next we claim that the whole sequence converges.
To prove this claim, we recall the energy functional
\begin{align}
\label{FDenergy2}
E^h(v) = J^h(v) + \frac{1}{2\Dt}\sum_{i,j}(v_{i,j}-u^{k-1}_{i,j})^2.
\end{align}
where
\begin{align}
\label{FDenergy}
J^h(v) =
\frac{1}{2}\sum_{i,j}\sqrt{\epsilon + |\nabla^+ v_{i,j}|^2} +
\frac{1}{2}\sum_{i,j}\sqrt{\epsilon + |\nabla^- v_{i,j}|^2}
+\frac{1}{2\lambda}\sum_{i,j}( v_{i,j} - f^h_{i,j})^2.
\end{align}

Let us prove the following lemma
\begin{lemma}
\label{FDiterkey}
Given $v^\ell$ defined in Algorithm~\ref{FDalgo}, we have for all $\ell\ge 1$
$$
\frac{1}{2\lambda} \|v^\ell - v^{\ell-1}\|^2 \le E(v^{\ell-1})- E(v^\ell).
$$
\end{lemma}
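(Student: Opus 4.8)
The plan is to recognize the iteration~\eqref{FDiter} as the Euler--Lagrange equation of an auxiliary \emph{quadratic} functional obtained by freezing the nonlinear weights at $v^{\ell-1}$, exploit its strong convexity to get a gap estimate, and then compare that functional with $E^h$ by an elementary pointwise inequality. Fix $\ell\ge 1$ and define
\begin{align*}
\widetilde{E}^{\ell}(w) := {}&\frac14\sum_{i,j}\frac{|\nabla^+ w_{i,j}|^2}{\sqrt{\epsilon+|\nabla^+ v^{\ell-1}_{i,j}|^2}}
+\frac14\sum_{i,j}\frac{|\nabla^- w_{i,j}|^2}{\sqrt{\epsilon+|\nabla^- v^{\ell-1}_{i,j}|^2}}\\
&+\frac{1}{2\lambda}\sum_{i,j}(w_{i,j}-f^h_{i,j})^2+\frac{1}{2\Dt}\sum_{i,j}(w_{i,j}-u^{k-1}_{i,j})^2 .
\end{align*}
Using the adjoint relations $\langle-\divp p,w\rangle=\langle p,\nabla^+ w\rangle$ and $\langle-\divm p,w\rangle=\langle p,\nabla^- w\rangle$, the equation $\partial\widetilde{E}^{\ell}(w)=0$ is precisely~\eqref{FDiter}; since $\widetilde E^{\ell}$ is a strictly convex quadratic (the $\tfrac1{2\Dt}$ and $\tfrac1{2\lambda}$ terms alone make it so), $v^{\ell}$ is its unique minimizer.

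Next I would use strong convexity. The two frozen-gradient sums are positive semidefinite quadratic forms in $w$, while $\tfrac1{2\lambda}\sum_{i,j}(w_{i,j}-f^h_{i,j})^2$ is $\tfrac1\lambda$-strongly convex, so $\widetilde E^{\ell}$ is $\tfrac1\lambda$-strongly convex. As $v^{\ell}$ minimizes it, the quadratic lower bound evaluated at $w=v^{\ell-1}$ gives
$$
\widetilde{E}^{\ell}(v^{\ell-1})-\widetilde{E}^{\ell}(v^{\ell})\ \ge\ \frac{1}{2\lambda}\,\|v^{\ell-1}-v^{\ell}\|^2 .
$$

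It remains to show $\widetilde{E}^{\ell}(v^{\ell-1})-\widetilde{E}^{\ell}(v^{\ell})\le E^h(v^{\ell-1})-E^h(v^{\ell})$, i.e. $\widetilde{E}^{\ell}(v^{\ell-1})-E^h(v^{\ell-1})\le\widetilde{E}^{\ell}(v^{\ell})-E^h(v^{\ell})$. Since $\widetilde E^{\ell}$ and $E^h$ share their $\tfrac1{2\lambda}$- and $\tfrac1{2\Dt}$-terms, this follows from the pointwise inequality, valid for all $a,b\ge0$ (take $a=|\nabla^{\pm}v^{\ell-1}_{i,j}|$, $b=|\nabla^{\pm}v^{\ell}_{i,j}|$):
$$
\frac14\frac{a^2}{\sqrt{\epsilon+a^2}}-\frac12\sqrt{\epsilon+a^2}\ \le\ \frac14\frac{b^2}{\sqrt{\epsilon+a^2}}-\frac12\sqrt{\epsilon+b^2},
$$
because, with $s=\sqrt{\epsilon+b^2}$ and $t=\sqrt{\epsilon+a^2}$, the right side minus the left side equals $\dfrac{s^2-t^2}{4t}-\dfrac{s-t}{2}=\dfrac{(s-t)^2}{4t}\ge0$. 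Summing over $(i,j)$ and over the two difference operators and combining with the previous display yields $E^h(v^{\ell-1})-E^h(v^{\ell})\ge\tfrac1{2\lambda}\|v^{\ell}-v^{\ell-1}\|^2$.

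The only steps that are not purely mechanical are guessing the right auxiliary quadratic $\widetilde E^{\ell}$ (in particular the factor $\tfrac14$ on the frozen-gradient terms, which is forced by matching with~\eqref{FDiter}) and noticing the pointwise inequality, which is just $2st\le s^2+t^2$ after the substitution. Everything else is a short direct computation, so I do not anticipate a serious obstacle.
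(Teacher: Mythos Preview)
Your proof is correct. In substance it relies on the same two elementary inequalities as the paper---the quadratic inequality underlying strong convexity and the square-root inequality $(s-t)^2\ge 0$ with $s=\sqrt{\epsilon+b^2}$, $t=\sqrt{\epsilon+a^2}$---but the packaging is different and, arguably, cleaner. The paper proceeds by hand: it expands $E^h(v^{\ell-1})-E^h(v^\ell)$ term by term, multiplies the iteration~\eqref{FDiter} by $v^{\ell-1}-v^\ell$, applies $a(b-a)\le (b^2-a^2)/2$ to the frozen-gradient cross terms, and then invokes exactly your pointwise square-root inequality (stated there as $2\sqrt{\epsilon+b^2}-2\sqrt{\epsilon+a^2}\ge (b^2-a^2)/\sqrt{\epsilon+b^2}$). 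Your route instead identifies the structural reason the iteration works: $\widetilde E^{\ell}$ is a quadratic surrogate whose minimizer is $v^\ell$, strong convexity gives the $\tfrac{1}{2\lambda}$ gap in one line, and the surrogate comparison $\widetilde E^{\ell}-E^h$ is monotone in $w$ pointwise. This is the majorization--minimization viewpoint on lagged-diffusivity, and it makes transparent why the factor $\tfrac14$ is forced. The paper's computation and your abstraction are equivalent; yours generalizes more readily (e.g., to other regularizers with the same concavity-in-$|\nabla w|^2$ structure), while the paper's avoids introducing an auxiliary object.
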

\begin{proof}
Fix $\ell\ge 1$.
For the terms in $E(v^{\ell-1})- E(v^\ell)$, we first consider
\begin{eqnarray}
\label{FDstep1}
&&\frac{1}{2\Dt}\sum_{i,j}(v^{\ell-1}_{i,j}-u^{k-1}_{i,j})^2 - \frac{1}{2\Dt}\sum_{i,j}(v^\ell_{i,j}-u^{k-1}_{i,j})^2\cr
&=& \frac{1}{2\Dt}\sum_{i,j}(v^{\ell-1}_{i,j}-v^\ell_{i,j})^2 + \frac{1}\Dt \sum_{i,j}(v^\ell_{i,j}-u^{k-1}_{i,j})(v^{\ell-1}_{i,j}-v^\ell_{i,j}).
\end{eqnarray}
To estimate the second term on the right-hand side of the equation above, we
multiply $v^{\ell-1}_{i,j}-v^\ell_{i,j}$ to the equation (\ref{FDiter})
and sum over $i, j=0, \cdots, N-1$ to have
\begin{eqnarray*}
&& \frac{1}\Dt\sum_{i,j}(v^\ell_{i,j}-u^{k-1}_{i,j})(v^{\ell-1}_{i,j}-v^\ell_{i,j}) \\
&=& -\frac{1}{2} \sum_{i,j} \dfrac{\nabla^+ v^\ell_{i,j} \nabla^+ (v^{\ell-1}_{i,j}-v^\ell_{i,j})  }
{\sqrt{\epsilon + |\nabla^+ v^{\ell-1}_{i,j}|^2}}
 - \frac{1}{2} \sum_{i,j} \dfrac{\nabla^- v^\ell_{i,j} \nabla^- (v^{\ell-1}_{i,j}-v^\ell_{i,j})  }
{\sqrt{\epsilon + |\nabla^- v^{\ell-1}_{i,j}|^2}}
 - \frac{1}{\lambda}\sum_{i,j}(v^\ell_{i,j}-f^h_{i,j}) (v^{\ell-1}_{i,j}-v^\ell_{i,j}) .
\end{eqnarray*}
Using an elementary inequality $a(b-a) \le b^2/2- a^2/2$, we can easily see
\begin{equation}
-\frac{1}{2} \sum_{i,j} \dfrac{\nabla^+ v^\ell_{i,j} \nabla^+ (v^{\ell-1}_{i,j}-v^\ell_{i,j})  }
{\sqrt{\epsilon + |\nabla^+ v^{\ell-1}_{i,j}|^2}}
\ge  -\frac{1}{4}\sum_{i,j} \dfrac{\nabla^+ v^{\ell-1}_{i,j} \nabla^+ v^{\ell-1}_{i,j}}
{\sqrt{\epsilon + |\nabla^+ v^{\ell-1}_{i,j}|^2}}  +  \frac{1}{4} \sum_{i,j} \dfrac{\nabla^+ v^{\ell}_{i,j} \nabla^+ v^{\ell}_{i,j}}
{\sqrt{\epsilon + |\nabla^+ v^{\ell-1}_{i,j}|^2}}.
\end{equation}
Similar for other term involving $\nabla^-$.

Next we have
\begin{eqnarray}
\label{FDstep2}
&& \frac{1}{2\lambda}\sum_{i,j}( v^{\ell-1}_{i,j} - f^h_{i,j})^2 - \frac{1}{2\lambda}\sum_{i,j}( v_{i,j}^\ell - f^h_{i,j})^2\cr
&=& \frac{1}{2\lambda}\sum_{i,j} (v^{\ell-1}_{i,j}-v^\ell_{i,j})(v^{\ell-1}_{i,j}+v^\ell_{i,j}-2f^h_{i,j})\cr
&=& \frac{1}{2\lambda}\sum_{i,j} (v^{\ell-1}_{i,j}-v^\ell_{i,j})^2 +
\frac{1}{\lambda}\sum_{i,j} (v^\ell_{i,j}-f^h_{i,j})(v^{\ell-1}_{i,j}-v^\ell_{i,j}).
\end{eqnarray}

Finally we need another elementary inequality:
for any real numbers $a, b$ and $\epsilon>0$,
$$
2\sqrt{\epsilon+ b^2} - 2\sqrt{\epsilon+ a^2} \ge \frac{ b^2}{\sqrt{\epsilon+ b^2} } - \frac{a^2}{\sqrt{\epsilon+ b^2}}.
$$
This inequality can be proved as follows.
By the arithmetic-geometric inequality, we have
$$
2\sqrt{\epsilon+ a^2} \sqrt{\epsilon+ b^2} \le 2\epsilon+ a^2 + b^2.
$$ Rearranging the terms, we get
$$
b^2- a^2 \le 2(\epsilon+ b^2) - 2\sqrt{\epsilon + a^2} \sqrt{\epsilon + b^2}.
$$
Now dividing $\sqrt{\epsilon+ b^2}$ both sides, we obtain the desired inequality.

Using the above inequality, we can easily verify the following inequality
\begin{equation}
\label{FDstep3}
\frac{1}{2}\sum_{i,j}\sqrt{\epsilon + |\nabla^+ v^{\ell-1}_{i,j}|^2}  -\frac{1}{2}\sum_{i,j}\sqrt{\epsilon + |\nabla^+ v^\ell_{i,j}|^2}
\ge \frac{1}{4}\sum_{i,j}  \dfrac{\nabla^+ v^{\ell-1}_{i,j} \nabla^+ v^{\ell-1}_{i,j}}
{\sqrt{\epsilon + |\nabla^+ v^{\ell-1}_{i,j}|^2}} - \frac{1}{4}\sum_{i,j}  \dfrac{\nabla^+ v^{\ell}_{i,j} \nabla^+ v^{\ell}_{i,j}}
{\sqrt{\epsilon + |\nabla^+ v^{\ell-1}_{i,j}|^2}}.
\end{equation}

Similar for the terms involving $\nabla^-$. We now add all equalities and inequalities (\ref{FDstep1}), (\ref{FDstep2}) and (\ref{FDstep3})
 together to have
\begin{equation}
E(v^{\ell-1})- E(v^\ell)\ge
\frac{1}{2\lambda}\sum_{i,j} (v^{\ell-1}_{i,j}-v^\ell_{i,j})^2 .
\end{equation}
This completes the proof.
\end{proof}

We are now ready to prove the main result in this subsection.
\begin{theorem}
\label{FDmain2}
The iterative solutions defined in Algorithm~\ref{FDalgo} converge to the solution of (\ref{FD2}) for any fixed $k\ge 1$.
\end{theorem}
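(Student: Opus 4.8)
The plan is to combine the energy-descent estimate of Lemma~\ref{FDiterkey} with the uniform bound~(\ref{FDiter2}) and the uniqueness of the solution of~(\ref{FD2}). First I would note that $E^h$ is bounded below, since every term in~(\ref{FDenergy2}) is manifestly nonnegative. Hence by Lemma~\ref{FDiterkey} the sequence $\{E^h(v^\ell)\}_{\ell\ge 0}$ is nonincreasing and bounded, so it converges to some limit $E^*$. Therefore $E^h(v^{\ell-1}) - E^h(v^\ell)\to 0$, and the inequality $\frac{1}{2\lambda}\|v^\ell - v^{\ell-1}\|^2 \le E^h(v^{\ell-1}) - E^h(v^\ell)$ forces $\|v^\ell - v^{\ell-1}\|\to 0$ as $\ell\to\infty$.

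Next, by~(\ref{FDiter2}) the iterates $\{v^\ell\}$ lie in a bounded, hence precompact, subset of $\mathbb{R}^{N\times N}$. Let $v^*$ be any accumulation point, say $v^{\ell_k}\to v^*$. Since $\|v^{\ell_k} - v^{\ell_k-1}\|\to 0$, we also have $v^{\ell_k-1}\to v^*$. The right-hand side of~(\ref{FDiter}), viewed jointly as a function of $(v^\ell, v^{\ell-1})$, is continuous: the denominators $\sqrt{\epsilon + |\nabla^\pm v^{\ell-1}_{i,j}|^2}$ are bounded below by $\sqrt{\epsilon}>0$, so no singularity occurs, and for each fixed $v^{\ell-1}$ the equation determines $v^\ell$ as the solution of a strictly positive definite linear system, so the iteration map is well defined and continuous. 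Passing to the limit along $\ell = \ell_k$ in~(\ref{FDiter}) thus shows that $v^*$ satisfies~(\ref{euler-lagrange-for-one-step}), i.e. $v^*$ solves~(\ref{FD2}). By the uniqueness theorem for~(\ref{FD2}), $v^* = u^k$. Hence $u^k$ is the unique accumulation point of the bounded sequence $\{v^\ell\}$, and therefore $v^\ell\to u^k$.

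Finally, to also identify $E^*$ and obtain a quantitative rate, I would use that, by continuity, $E^* = \lim_k E^h(v^{\ell_k}) = E^h(u^k) = \min_v E^h(v)$, and that the quadratic term $\frac{1}{2\Dt}\|v - u^{k-1}\|^2$ makes $E^h$ strongly convex with modulus $1/\Dt$ while $u^k$ is its minimizer; this yields $\|v^\ell - u^k\|^2 \le 2\Dt\,(E^h(v^\ell) - E^h(u^k))\to 0$, which re-proves convergence with an explicit bound. The main obstacle, and it is a mild one, is the step identifying accumulation points as solutions of~(\ref{FD2}): one must secure $\|v^\ell - v^{\ell-1}\|\to 0$ \emph{before} taking the limit in~(\ref{FDiter}), so that both arguments of the lagged-diffusivity right-hand side tend to the same limit, and one must use $\epsilon>0$ to guarantee the continuity of that right-hand side.
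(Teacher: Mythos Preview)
Your proof is correct and follows essentially the same strategy as the paper: energy descent from Lemma~\ref{FDiterkey}, compactness from~(\ref{FDiter2}), passing to the limit in~(\ref{FDiter}) along a subsequence (using that consecutive iterates collapse), and uniqueness of the solution of~(\ref{FD2}) to identify every accumulation point as $u^k$. Your organization is in fact a bit cleaner than the paper's---you first deduce $\|v^\ell-v^{\ell-1}\|\to 0$ along the \emph{whole} sequence from convergence of the bounded monotone energies, whereas the paper extracts this only along a subsequence---and your final strong-convexity remark yielding $\|v^\ell-u^k\|^2\le 2\Dt\,(E^h(v^\ell)-E^h(u^k))$ is a nice quantitative bonus not present in the paper.
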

\begin{proof}
We have already shown that the iterative solution vectors $\{v^\ell_{i,j}, 0\le i, j \le N-1\}$ have a convergent subsequence
$\{v^{\ell_k}_{i,j}, 0\le i, j\le N-1\}, k=1, 2, \cdots$ to a vector $v^*$. It is easy to see that the energies
$E(v^{\ell_k}), k\ge 1$ are also convergent to $E(v^*)$.
By Lemma~\ref{FDiterkey}, we know that energies $E(v^\ell)$ are decreasing for all $\ell$ and hence, $E(v^{\ell_k+1})$
decrease to $E(v^*)$. By using Lemma~\ref{FDiterkey} again, we see $\|v^{\ell_k+1}-v^{\ell_k}\|^2\le 2\lambda (E(v^{\ell_k}
-E(v^{\ell_k+1})\to 0$. Thus, $v^{\ell_k+1}, k\ge 1$ are also convergent to $v^*$.
The uniqueness of the solution of (\ref{FD2}) implies that $v^*$ is the solution vector $\{u^k_{i,j}, 0\le i, j\le N-1\}$.
\end{proof}


\section{Computational Results}
We have implemented our iterative algorithm in the previous section in MATLAB. Let us report one numerical
example for simplicity.

\begin{example}
In this Example, we use the algorithm to remove the noised from
images. For comparison, we also provide denoised images by using a
standard Perona-Malik PDE method with diffusivity function
$c(s)=1/\sqrt{1+s}$ (cf. \cite{PM90}). 
A Gaussian noise with $\sigma^2=20$ is added to the clean image of LENA and BARBARA. The PSNR of the noised images is 22.11.
PSNR of the recovered images are shown on the top of the images.
The two denoised images are shown in Figures~\ref{fig:3} and \ref{fig:4}. The left one is done by the PM method and the right one is
based on our finite difference scheme.  From these examples, we can see that our finite difference scheme works as the same or slightly
better  than the Perona-Malik method.

\begin{figure}[htbp]
\begin{tabular}{rr}
\includegraphics[width=3in]{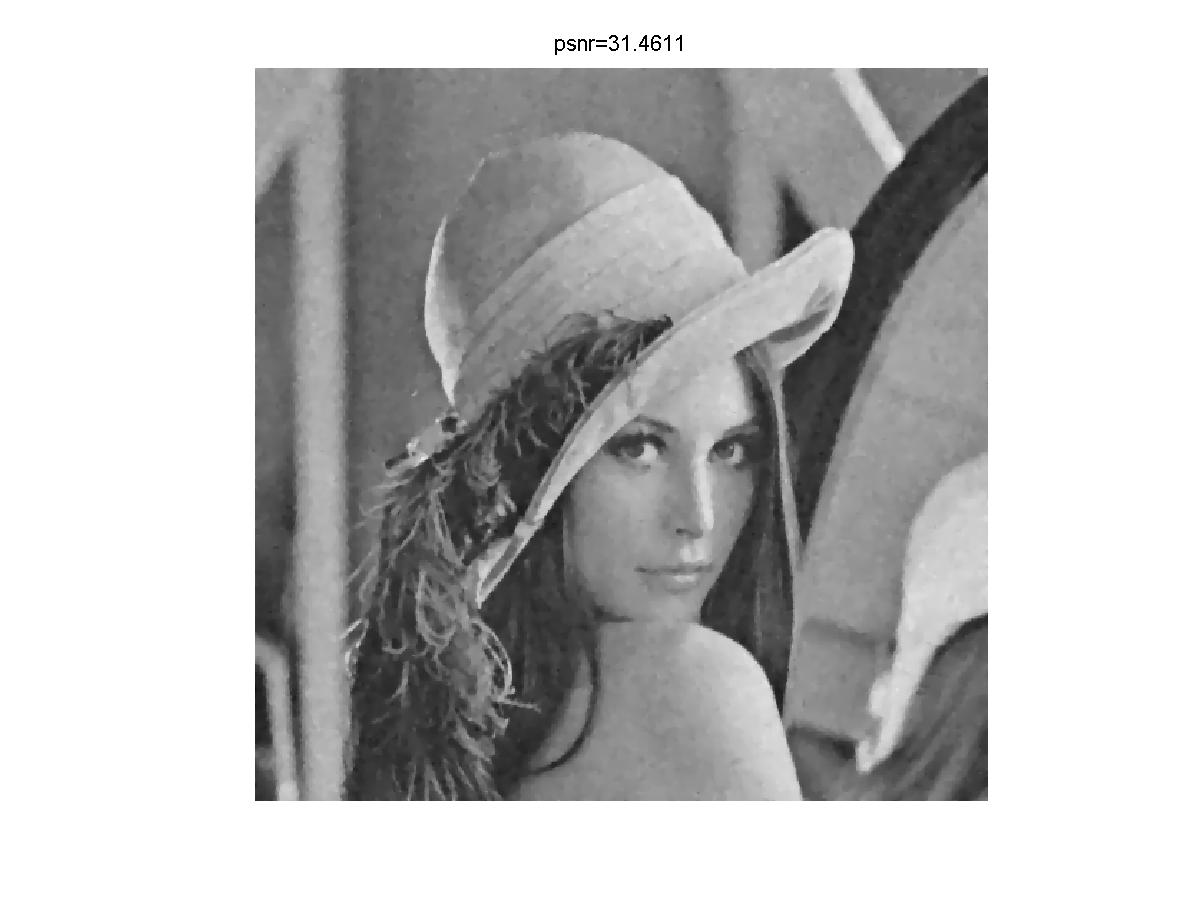}
& \includegraphics[width=3in]{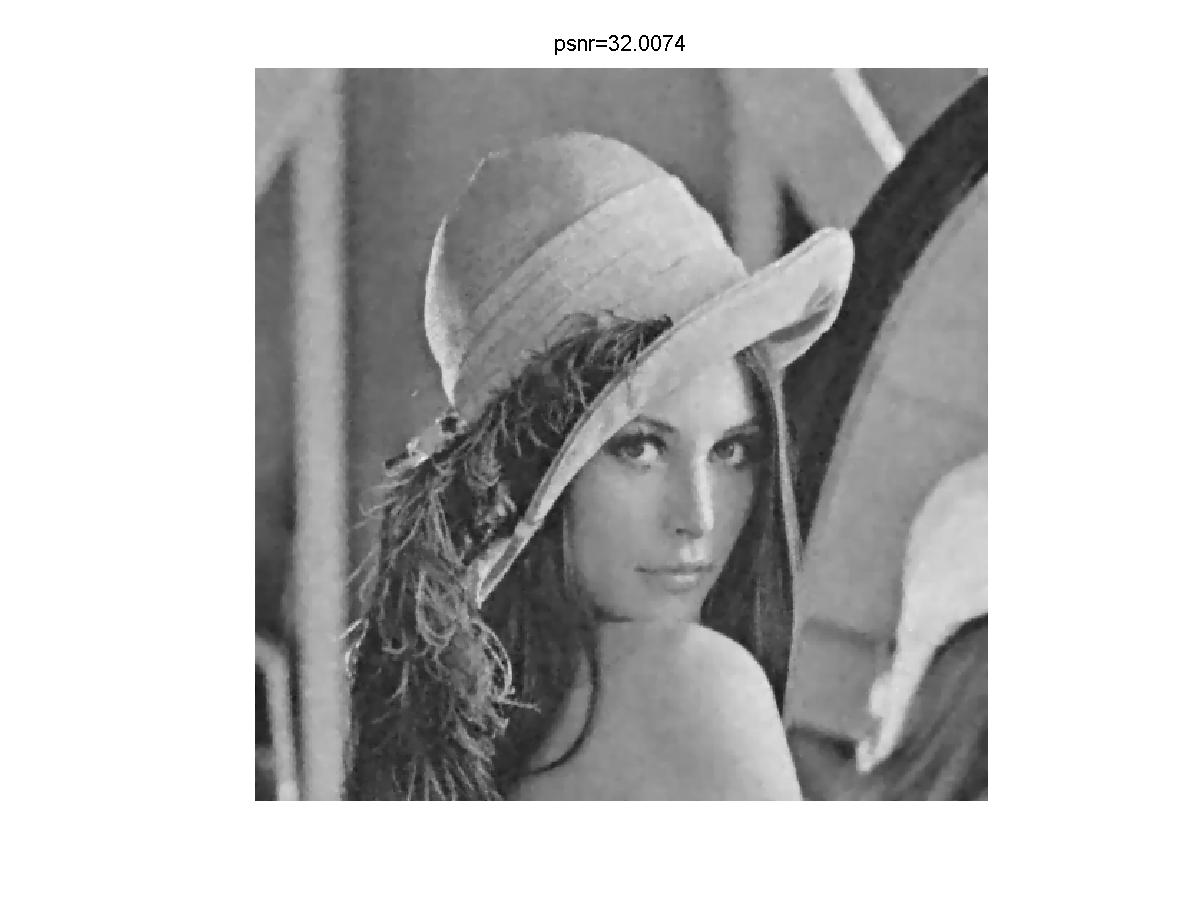}
\end{tabular}
\caption{The denoised  images by the PM method and the denoised image (right) by our finite difference scheme
\label{fig:3}}
\end{figure}

\begin{figure}[htbp]
\begin{tabular}{ll}
\includegraphics[width=3in]{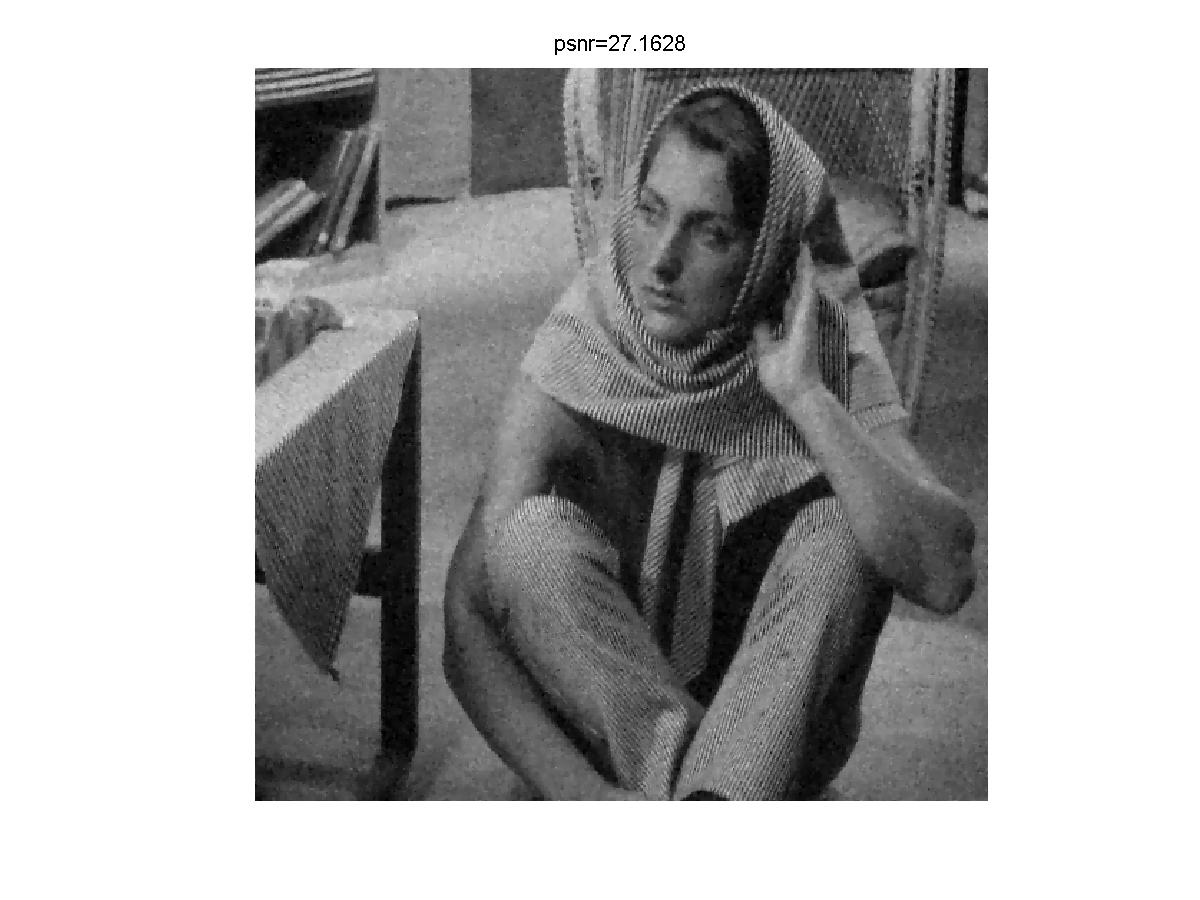}
& \includegraphics[width=3in]{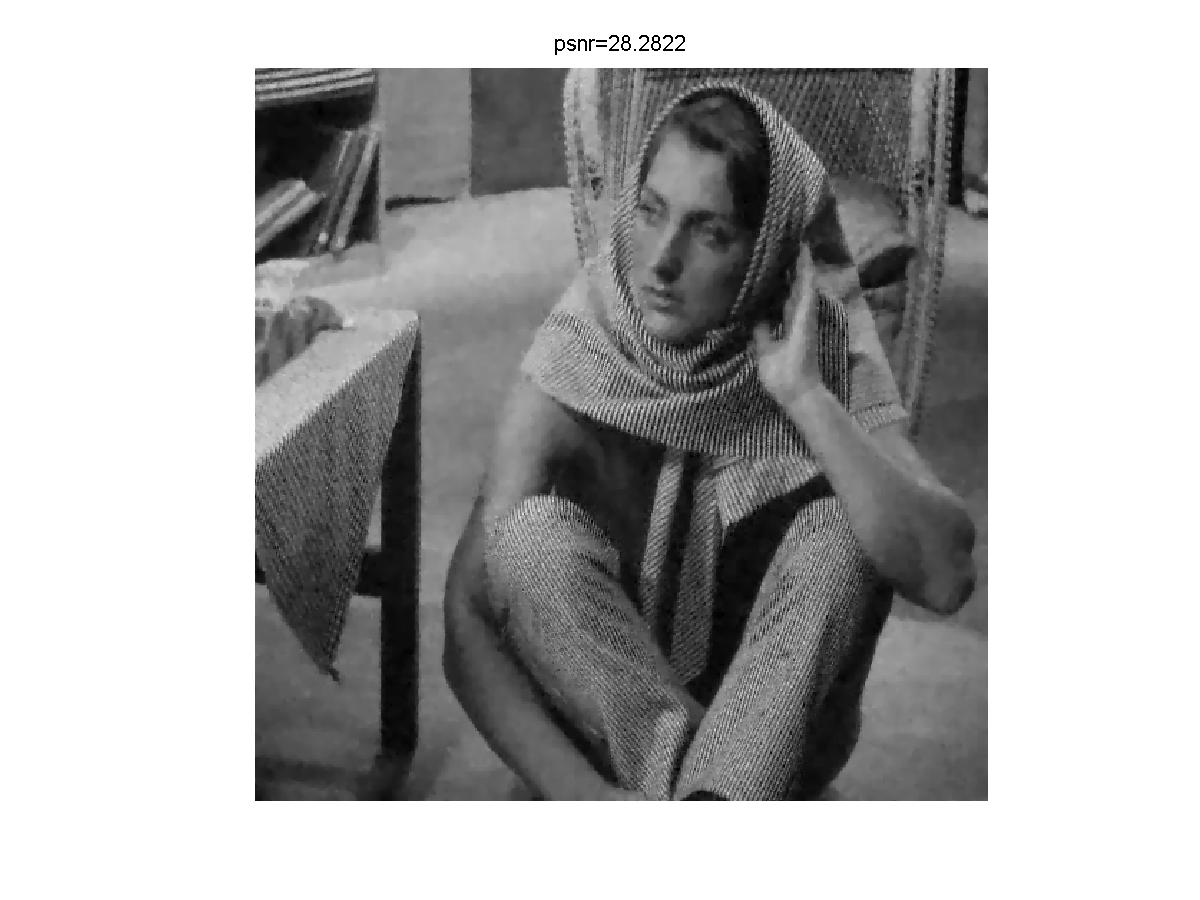}
\end{tabular}
\caption{The denoised  images by the PM method and the denoised image (right) by our finite difference scheme
\label{fig:4}}
\end{figure}

\end{example}

\begin{acknowledgement}
The authors would like to thank Leopold Matamba Messi for  several suggestions which improve
the readability of this paper.
\end{acknowledgement}


\begin{thebibliography}{99}
\bibitem{av} R. Acar and C.R. Vogel (1994): Analysis of bounded variation penalty methods for
ill-posed problems, Inverse Problems, 10, 1217--1229.

\bibitem{ABC01}
F. Andreu, C. Ballester, V. Caselles, and J. M. Maz\'on, The Dirichlet problem for the total
variation flow, J. Funct. Anal., 180(2001):347--403.

\bibitem{ABC01b}
F. Andreu, C. Ballester, V. Caselles, and J. M. Maz\'on, Minimizing total variation flow,
Differential Integral Equations, 14(2001):321--360.

\bibitem{ABC02}
F. Andreu, V. Caselles, J. I. D\'iaz, and J. M. Maz\'on, Some qualitative properties for the
total variation flow, J. Funct. Anal., 188(2002):516--547.

\bibitem{CDPX99}
A. Cohen, R. DeVore, P. Petrushev, H. Xu, Nonlinear approximation and the space of $BV(\mathbb{R}^2)$,
Amer. J. Math., 121 3(1999), 587-628

\bibitem{BS94}
S. Brenner and L. R. Scott, The Mathematical Theory of Finite Element Methods, Spring-Verlag, 1994.

\bibitem{DV97}
D. C. Dobson and C. R. Vogel. Convergence of an iterative method for total variation denoising.
SIAM J. Numer. Anal., 34(1997), 1779--1791.


\bibitem{FL10}
X. Feng and M. -J. Lai, a private communication, July, 2010.

\bibitem{FOP05}
X. Feng, M. von Oehsen, and A. Prohl. Rate of convergence of regularization procedures and
finite element approximations for the total variation flow, Numer. Math., 100(2005), 441--456.


\bibitem{FP03}
X. Feng and A. Prohl. Analysis of total variation flow and its finite element approximations,
Math. Mod. Num. Anal., 37(2003) 533--556.


\bibitem{FY09}
X. Feng and M. Yoon,  Finite element approximation of the gradient flow for a class of
linear growth energies with applications to color image denoising,
Int. J. Numer. Anal. Model.  6 (2009),  389--40.

\bibitem{Gerhardt80}
C. Gerhardt, Evolutionary surfaces of prescribed mean curvature, J. Diff. Eq. 36(1980), 139--172.

\bibitem{Giusti84}
E. Giusti, Minimal Surfaces and Functions of Bounded Variation, Birkhauser, 1984.

\bibitem{LLM12}
M. J. Lai and L. Matamba Messi, Piecewise Linear Approximation of
the continuous Rudin-Osher-Fatemi model for image denoising,
to appear in SIAM J. Num. Analysis,  2012.

\bibitem{LT78}
A. Lichnewsky and R. Temam, Pseudo-solution of the Time Dependent Minimal Surface Problem, J. of Differential Equations, 30(1978), 340--364.



\bibitem{PM90}
P. Parona and J. Malik, Scale-Space and Edge Detection Using
Anisotropic Diffusion, IEEE Trans. Pattern Analysis Machine Intelligence, 12 (1990), pp. 629--639.

\bibitem{VS02}
L. Vese and S. Osher, Numerical methods for p-harmonic
flows and applications to image processing, SIAM J. Numer. Anal., 40(2002), 2085--2104.


\bibitem{VO96}
C. R. Vogel and M. E. Oman, Iterative methods for total variation denoising, SIAM J. Sci. Comput., 17 (1996),  227-238.

\bibitem{LW10}
J. Wang and B. J. Lucier, Error bounds for finite-difference methods
for rudin-osher-fatemi image smoothing, SIAM J. Numerical Analysis,
49 (2011), pp. 845�-868.

\bibitem{Ziemer89}
W. P. Ziemer, Weakly differentiable functions, Springer-Verlag,
1989.
\end{thebibliography}
\end{document}